\documentclass{amsart}
\usepackage{amssymb}
\usepackage{amscd}
\usepackage[all,cmtip]{xy}
\usepackage{hyperref}
\usepackage{graphicx}


\newcommand{\comment}[1]{}


\newcommand{\Q}{\mathbf{Q}}
\newcommand{\Z}{\mathbf{Z}}

\newcommand{\m}{\mathfrak{m}}
\newcommand{\pa}{\mathfrak{p}}





\newcommand{\bi}{\langle\,\, ,\,\rangle}






\newcommand{\limplies}{\Longleftarrow}


\theoremstyle{plain}
\newtheorem{theorem}{Theorem}[section]

\newtheorem{corollary}[theorem]{Corollary}

\newtheorem{lemma}[theorem]{Lemma}

\newtheorem{proposition}[theorem]{Proposition}

\theoremstyle{definition}
\newtheorem{definition}[theorem]{Definition}
\newtheorem{remark}[theorem]{Remark}

\newtheorem{examples}[theorem]{Examples}



\makeindex

\begin{document}

\begin{center}
{\large\bf Michiel Kosters - Universiteit Leiden \par}
{\large\bf \texttt{mkosters@math.leidenuniv.nl}, \today \par}
 \vspace{3em} {\LARGE\bf Anisotropic modules over artinian principal ideal rings \par} 
{\large\bf }
\end{center}

\section{Abstract}

Let $V$ be a finite-dimensional vector space over a field $k$ and let $W$ be a $1$-dimensional $k$-vector space. Let $\bi: V \times V \to W$ be a
symmetric bilinear form. Then $\bi$ is called \emph{anisotropic} if for all nonzero $v \in V$ we have $\langle v,v \rangle \neq 0$. Motivated by a
problem in algebraic number theory, we come up with a generalization of the concept of \emph{anisotropy} to symmetric bilinear forms on finitely
generated modules over artinian principal ideal rings. We will give many equivalent definitions of this concept of anisotropy. One of the
definitions shows that one can check if a form is anisotropic by checking if certain forms on vector spaces are
anisotropic. We will also discuss the concept of \emph{quasi-anisotropy} of a symmetric bilinear form, which has no vector space analogue. Finally we
will discuss the \emph{radical root} of a symmetric bilinear form, which doesn't have a vector space analogue either. All three concepts have
applications in algebraic number theory. 

\section{Introduction}

\subsection{The case of finite abelian groups}

Let $M$ be a finite abelian group of exponent $d$. Remark that $M$ is a finitely generated $\Z/d\Z$-module and that $\Z/d\Z$ is an artinian principal
ideal
ring. Using the structure theorem of finite abelian groups we can write 
\begin{eqnarray*}
M &=& \bigoplus_{p,i} F_{p,i}
\end{eqnarray*}
where $p$ ranges over the set of prime numbers, $i \in \Z_{>0}$ and the $F_{p,i}$ are free modules over $\Z/p^i \Z$. We call
$M$ \emph{semi-simple} if only the $F_{p,1}$ are nonzero, equivalently, if $d$ is squarefree. 
Let $\bi: M \times M \to \Q/\Z$ be a symmetric bilinear form. This form is called \emph{non-degenerate} if the map
\begin{eqnarray*}
M &\to& \mathrm{Hom}(M,\Q/\Z) \\
m &\mapsto& \left(m' \mapsto \langle m,m' \rangle \right)
\end{eqnarray*}
is an isomorphism. 

Now we define for all primes $p$ the $\Z/p\Z$-vector spaces
\begin{eqnarray*}
V_{p,\mathrm{odd}} = \bigoplus_{i \mathrm{\ odd}} F_{p,i}/pF_{p,i}
\end{eqnarray*}
and similarly
\begin{eqnarray*}
V_{p,\mathrm{even}} = \bigoplus_{i \mathrm{\ even}} F_{p,i}/pF_{p,i}.
\end{eqnarray*}
We define an inner product on $V_{p,\mathrm{odd}}$ as follows:
\begin{eqnarray*}
\bi_{p,\mathrm{odd}}: V_{p,\mathrm{odd}} \times V_{p,\mathrm{odd}} &\to& p^{-1}\Z/\Z \cong \Z/p\Z \\
\left( (x_i)_{i \mathrm{\ odd}}, (y_i)_{i \mathrm{\ odd}} \right) &\mapsto& \sum_{i \mathrm{\ odd}}  p^{i-1} \langle x_i, y_i \rangle,
\end{eqnarray*}
In the same way for $V_{p,\mathrm{even}}$ we obtain a form $\bi_{p,\mathrm{even}}$.
Later we
will even define these forms without using the structure theorem of finite abelian groups. We can now state a simplified version of the main theorem
of this article (Theorem \ref{main1}). 
For a subgroup $L \subseteq M$ we set $L^{\perp}=\{x \in M: \forall l \in L: \langle x,l \rangle=0 \}$. 

\begin{theorem} \label{sot}
Let $M$ be a finite abelian group and let $\bi: M \times M \to \Q/\Z$ be a symmetric bilinear form. For $p$ prime consider the $\Z/p\Z$-vector spaces
$V_{p, \mathrm{odd}}$ and $V_{p, \mathrm{even}}$ with their forms defined as above. 
Then the following statements are equivalent:
\begin{enumerate}
\item
for all primes $p$ the forms on $V_{p,\mathrm{odd}}$ and $V_{p,\mathrm{even}}$ are anisotropic as forms of $\Z/p\Z$-vector spaces; 
\item
$\bi$ is non-degenerate and there exists a unique submodule $L \subseteq M$ such that $L \subseteq L^{\perp}$ and $L^{\perp}/L$ is semi-simple. 
\end{enumerate}
\end{theorem}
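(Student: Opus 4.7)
My plan is to reduce to the case of a $p$-group and produce a canonical sub-Lagrangian whose uniqueness encodes the anisotropy hypotheses. First, the decomposition $M=\bigoplus_p M_p$ into $p$-primary components is orthogonal for any $\Q/\Z$-valued form (elements of coprime prime-power orders pair to $0$ in $\Q/\Z$), and both conditions decouple prime by prime. So I may assume $M$ is a finite abelian $p$-group.

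In the $p$-group case define the intrinsic subgroup
\begin{eqnarray*}
L \;=\; \sum_{k\ge 0}\bigl(p^k M\cap M[p^k]\bigr),
\end{eqnarray*}
where $M[p^k]$ denotes the $p^k$-torsion. A direct computation using $p^kF_{p,i}\cap F_{p,i}[p^k]=p^{\max(k,i-k)}F_{p,i}$ shows $L\cap F_{p,i}=p^{\lceil i/2\rceil}F_{p,i}$ in any structure decomposition. The inclusion $L\subseteq L^\perp$ holds unconditionally: for $x\in p^{k_1}M\cap M[p^{k_1}]$ and $y=p^{k_2}z$ with $k_1\le k_2$, the relation $p^{k_1}x=0$ forces $\langle x,z\rangle$ to have order dividing $p^{k_1}$, hence $\langle x,y\rangle=p^{k_2}\langle x,z\rangle=0$.

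For (i) $\Rightarrow$ (ii): anisotropy of $\bi_{p,\mathrm{odd}}$ and $\bi_{p,\mathrm{even}}$ gives them non-degenerate, and a Nakayama-type determinant argument lifts this to non-degeneracy of $\bi$ on each $F_{p,i}$, and hence on $M$. Non-degeneracy permits an orthogonal structure decomposition, whence $L^\perp=\bigoplus_i p^{\lfloor i/2\rfloor}F_{p,i}$ and $L^\perp/L\cong\bigoplus_{i\text{ odd}}F_{p,i}/pF_{p,i}=V_{p,\mathrm{odd}}$, killed by $p$ and hence semi-simple. For uniqueness, given a second valid $L'$, pick $v\in L'$ and examine the largest index $i$ for which its $F_{p,i}$-component $v_i$ is not in $p^{\lceil i/2\rceil}F_{p,i}$; anisotropy of the appropriate $\bi_{p,*}$ applied to $v_i$ contradicts either $L'\subseteq {L'}^\perp$ or semi-simplicity of ${L'}^\perp/L'$, forcing $L\subseteq L'$ and by symmetry equality.

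For (ii) $\Rightarrow$ (i) I argue contrapositively. If $\bi_{p,\mathrm{odd}}$ has a nonzero isotropic $\bar v\in F_{p,2k+1}/pF_{p,2k+1}$, a lift $v\in F_{p,2k+1}$ gives $p^{k}v\notin L$ with $\langle p^{k}v,p^{k}v\rangle=p^{2k}\langle v,v\rangle=0$, and the subgroup $L'$ obtained by adjoining $p^{k}v$ to $L$ still satisfies $L'\subseteq {L'}^\perp$ with ${L'}^\perp/L'$ semi-simple, violating uniqueness. The case of $\bi_{p,\mathrm{even}}$ is analogous, constructing an $L'$ that replaces the contribution of $L$ in an even-index summand by a self-orthogonal submodule through an isotropic lift. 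The main obstacle is the uniqueness step of (i) $\Rightarrow$ (ii): a priori $L'$ need not respect any orthogonal decomposition. I would handle this by first proving that $V_{p,\mathrm{odd}}$ and $V_{p,\mathrm{even}}$, together with their forms, are canonical invariants of $(M,\bi)$---realizing $V_{p,\mathrm{odd}}$ as $L^\perp/L$ with its induced $\F_p$-valued form, and $V_{p,\mathrm{even}}$ as a similar canonical subquotient---after which uniqueness reduces to the vector-space fact that anisotropic forms have no nontrivial isotropic submodules.
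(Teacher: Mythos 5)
Your reduction to $p$-groups, your identification of the canonical candidate $L=\sum_k(p^kM\cap M[p^k])=\mathrm{lr}(M)$, and the verification $L\subseteq L^{\perp}$ all match the paper. But the two steps you flag as delicate are exactly where the proof lives, and your sketches for them do not work. For uniqueness in (i)$\Rightarrow$(ii) you must prove both $L'\subseteq\mathrm{lr}(M)$ and $\mathrm{lr}(M)\subseteq L'$. Your ``largest bad index'' argument, even granting it, only addresses the first inclusion, and ``by symmetry'' is unavailable: $\mathrm{lr}(M)$ is a specific canonical submodule while $L'$ is arbitrary. The second inclusion is where the even form genuinely enters. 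The paper's own example $M=(\Z/4)^2$ with Gram matrix $\left(\begin{smallmatrix}2&1\\1&2\end{smallmatrix}\right)$ shows why: there every self-orthogonal submodule is contained in $\mathrm{lr}(M)=2M$, yet $L'=\langle(2,0)\rangle\subsetneq 2M$ also satisfies $L'\subseteq L'^{\perp}$ with $L'^{\perp}/L'$ semi-simple. So uniqueness cannot be reduced, as you propose, to the vector-space fact that anisotropic forms have no isotropic vectors applied to a canonical subquotient; that reduction would not even detect the failure in this example.

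Even the inclusion $L'\subseteq\mathrm{lr}(M)$ is not routine: for $v\in L'$ you only know $\langle v,v\rangle=0$ in $\Q/\Z$, and cross-terms between the $F_{p,i}$-components can cancel, so a single such $v$ does not hand you an isotropic vector of $\bi_{p,\mathrm{odd}}$ or $\bi_{p,\mathrm{even}}$. The paper gets around this by using the full hypothesis $pL'^{\perp}\subseteq L'$ (equivalent to semi-simplicity of $L'^{\perp}/L'$): for $x\in L'^{\perp}$ one has $px\in L'$, hence $\langle x,px\rangle=0$, i.e.\ $p\langle x,x\rangle=0$, which feeds into Lemma \ref{sir3}; the alternative route through bare isotropic vectors needs the Hensel-type correction of Lemma \ref{hensel} and fails at $p=2$ (statements iv/v of Theorem \ref{main1} do not imply i). The actual engine is an induction on the exponent via shaving, $M\mapsto M[\m^{r-1}]/\m^{r-1}M$, with Lemma \ref{sat} transporting the problem downward and Lemma \ref{sir} handling the even isotropic case at exponent $2$ --- which is also the case you dismiss as ``analogous'' in your (ii)$\Rightarrow$(i) direction. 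Your proposal contains no mechanism playing the role of this induction, so both hard implications remain unproved.
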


In the second statement, the important part is the uniqueness. Indeed, consider the \emph{lower root} of $M$,
\begin{eqnarray*}
\mathrm{lr}(M)= \sum_{r \in \Z} \left( rM \cap M[r] \right)= \bigoplus_{p,i} p^{\lceil \frac{i}{2} \rceil} F_{p,i},
\end{eqnarray*}
where $M[r]=\{x \in M: rx=0\}$ and $rM=\{rm: m \in M\}$. If $\bi$ is non-degenerate, then we have $\mathrm{lr}(M) \subseteq \mathrm{lr}(M)^{\perp}$
and $\mathrm{lr}(M)^{\perp}/\mathrm{lr}(M)$ is semi-simple
(Corollary \ref{ji}). Hence in Theorem \ref{sot} ii it automatically follows that this unique submodule is $\mathrm{lr}(M)$. 

\begin{definition}
If one of the statements in the theorem above holds, then the form $\bi$ is called \emph{anisotropic}. 
\end{definition}

Remark that this definition is a generalization of the usual definition of anisotropy for a vector space over $\Z/p\Z$. Later in this article we will
discuss anisotropy in a more general setting.

Actually the naive generalization of the definition of anisotropy can partially be saved if $\# M$ is odd. Then in Theorem \ref{main1} we will show
that the form $\bi$ is anisotropic if and only if it is non-degenerate and any element $x \in M$ which satisfies $\langle x, x \rangle=0$ is an
element of $\mathrm{lr}(M)$. 

We have the following definitions.

\begin{definition}
For $p$ prime define the $\Z/p\Z$-vector space
\[V_{p,\mathrm{odd}}'=\bigoplus_{i \mathrm{\ odd},\ i \geq 3} F_{p,i}/pF_{p,i}\] with its natural form and recall the definition of
$V_{p,\mathrm{even}}$.  

Then $\bi$ is called \emph{quasi-anisotropic} if for all primes $p$ the forms on $V_{p,\mathrm{odd}}'$ and $V_{p,\mathrm{even}}$ are
anisotropic as forms of $\Z/p\Z$-vector spaces. 

We define the \emph{radical root} of $\bi$ to be
\begin{eqnarray*}
 \mathrm{rr}(M)=\bigcap_{L \subseteq M:\ L \subseteq L^{\perp},\ L^{\perp}/L \mathrm{\ semisimple}} L,
\end{eqnarray*}
where $L$ ranges over subgroups of $M$ and $L^{\perp}=\{x \in M: \forall l \in L: \langle x,l
\rangle=0\}$.
\end{definition}

In the article we will give some equivalent definitions of quasi-anisotropy and we will give a `formula' for the radical root. 

\subsection{Application in algebraic number theory}

We will now briefly discuss the applications in number theory. For the details read \cite{KO1} and \cite{KO2}. Given an order $A$ inside a number
field $K$, one uses the trace map from $K$ to $\Q$ to firstly describe a finite additive group $M$ of size equal to the absolute value of the
discriminant of $A$ and secondly a $\Q/\Z$-valued non-degenerate symmetric bilinear form on $M$. The ring of integers, $\mathcal{O}_K$, corresponds,
under some tameness assumptions, to a subgroup $L$ of $M$ that satisfies $L \subseteq L^{\perp}$ and $L^{\perp}/L$ is semi-simple. In the case of
anisotropy one now directly obtains that the group corresponding to $\mathcal{O}_K$ is equal to $\mathrm{lr}(M)$ and one can find $\mathcal{O}_K$
directly.  

It turns out that for our applications in number theory, the weaker statement of quasi-anisotropy gives similar
results. The radical root in many cases gives a large part of the ring of integers, and hence is also important. 

\section{Artinian principal ideal rings}

We will now generalize and prove the statements from the introduction. Let $R$ be an artinian principal ideal ring and let $M$ be a finitely generated
$R$-module. The following lemma allows us to give local proofs in many cases.

\begin{lemma} \label{ds}
The following statements hold.
\begin{enumerate}
\item
$R$ has only finitely many prime ideals;
\item
$R=\prod_{\pa \in \mathrm{Spec}(R)} R_{\pa}$; 
\item
$M=\bigoplus_{\pa \in \mathrm{Spec}(R)} M_{\pa}$;
\item
$\mathrm{Hom}_R(M,R)=\bigoplus_{\pa \in \mathrm{Spec}(R)} \mathrm{Hom}_{R_{\pa}}(M_{\pa},R_{\pa})$.
\end{enumerate}
\end{lemma}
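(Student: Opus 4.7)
The plan is to reduce everything to the standard structure theorem for artinian rings: any artinian ring is a finite product of local artinian rings, and these local factors can be identified with its localizations at maximal ideals.

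For (i), I would use the fact that an artinian ring has Krull dimension zero, so every prime ideal is maximal. Since $R$ is noetherian (being artinian), $(0)$ is a finite intersection (or product) of maximal ideals, and this forces the spectrum to be finite. Alternatively, one can argue directly: if $\mathfrak{m}_1,\mathfrak{m}_2,\ldots$ were distinct maximal ideals, then the descending chain $\mathfrak{m}_1 \supsetneq \mathfrak{m}_1\mathfrak{m}_2 \supsetneq \cdots$ would contradict the descending chain condition.

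For (ii), using (i) write $\mathrm{Spec}(R)=\{\mathfrak{p}_1,\ldots,\mathfrak{p}_n\}$. Since the $\mathfrak{p}_i$ are pairwise comaximal maximal ideals and some power of the Jacobson radical vanishes (artinianness), the ideals $\mathfrak{p}_i^N$ are pairwise coprime with trivial intersection for $N$ large. The Chinese Remainder Theorem then gives $R \cong \prod_i R/\mathfrak{p}_i^N$. Each factor $R/\mathfrak{p}_i^N$ is local, and it is easy to check that it is canonically isomorphic to $R_{\mathfrak{p}_i}$ (all elements outside $\mathfrak{p}_i$ become units in $R/\mathfrak{p}_i^N$ by the local structure).

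For (iii) and (iv), the ring decomposition from (ii) corresponds to a complete set of orthogonal idempotents $\{e_{\mathfrak{p}}\}$ in $R$. Then $M = \bigoplus_{\mathfrak{p}} e_{\mathfrak{p}} M$, and one checks that $e_{\mathfrak{p}} M = M_{\mathfrak{p}}$ since localization kills the other idempotents. Equivalently, tensoring the product decomposition of $R$ with $M$ gives $M \cong M\otimes_R \prod R_{\mathfrak{p}} \cong \bigoplus M_{\mathfrak{p}}$, where the last isomorphism uses that the product is finite. For (iv), $\Hom_R(M,R) = \Hom_R\bigl(\bigoplus_{\mathfrak{p}} M_{\mathfrak{p}}, \prod_{\mathfrak{q}} R_{\mathfrak{q}}\bigr) = \bigoplus_{\mathfrak{p}} \Hom_R(M_{\mathfrak{p}}, R_{\mathfrak{p}})$ because a map from an $R_{\mathfrak{p}}$-module into $R_{\mathfrak{q}}$ is zero when $\mathfrak{p} \neq \mathfrak{q}$ (the idempotent $e_{\mathfrak{p}}$ acts as $1$ on the source and $0$ on the target). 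Finally $\Hom_R(M_{\mathfrak{p}},R_{\mathfrak{p}}) = \Hom_{R_{\mathfrak{p}}}(M_{\mathfrak{p}},R_{\mathfrak{p}})$ since $M_{\mathfrak{p}}$ is already an $R_{\mathfrak{p}}$-module via the quotient $R \twoheadrightarrow R_{\mathfrak{p}}$.

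The only mildly nontrivial step is the identification $R/\mathfrak{p}_i^N \cong R_{\mathfrak{p}_i}$ in (ii); everything else is formal bookkeeping with idempotents and finite direct sums. Nothing in the proof uses that $R$ is a principal ideal ring, only that it is artinian.
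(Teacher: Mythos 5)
Your proof is correct and follows the same route as the paper, namely the standard structure theory of artinian rings: finiteness of the spectrum, the decomposition $R\cong\prod_{\pa}R/\pa^N\cong\prod_{\pa}R_{\pa}$, and then transporting this to $M$ and to $\Hom$ via the resulting idempotents. The paper simply delegates (i) and (ii) to references (Atiyah--MacDonald 8.3/8.5 and a Lang exercise) and obtains (iii) by tensoring, exactly as in your alternative argument; your observation that only artinianness (not the principal ideal hypothesis) is used is also accurate.
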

\begin{proof}
The first statement follows from \cite{AT}, Theorem 8.5 and Proposition 8.3. For the second statement see \cite{LA}, Exercise 10.9f. The third
statement follows from the second one by tensoring with $M$ over $R$. The last statement now follows easily. 
\end{proof}

Assume for the moment that $R$ is local as well and let $\m$ be its maximal ideal. For example one can take $R$ to be a field or $\Z/p^n\Z$ where $p$
is prime and $n \in \Z_{\geq 1}$. Fix a generator $\pi$ of the maximal ideal. As $R$ is local and artinian, there is a smallest $n \in \Z_{\geq 1}$
such that $\m^n=0$ (see \cite{AT}, Proposition 8.4). We fix this number $n$ from now on. One can show that all ideals of $R$ occur in the series $R
\supsetneq \m \supsetneq \m^2 \supsetneq \ldots \supsetneq \m^n=0$ and that $R$ has length $n$ as an $R$-module. We will write any ideal of
$R$ as $\m^i$ where $0 \leq i \leq n$.

We can now easily prove the following result. 

\begin{corollary} \label{inj}
The ring $R$ is injective as an $R$-module.
\end{corollary}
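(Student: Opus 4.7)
The plan is to apply Baer's criterion: it suffices to show that every $R$-linear map $f \colon I \to R$ from an ideal $I \subseteq R$ extends to an $R$-linear map $R \to R$. Since the paper has already recorded that every ideal of the local artinian PIR $R$ is of the form $\m^i = (\pi^i)$ for some $0 \le i \le n$, I only have to deal with principal ideals $(\pi^i)$, and the cases $i=0$ and $i=n$ are trivial.

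So fix $0 < i < n$ and an $R$-linear map $f \colon (\pi^i) \to R$. The map $f$ is completely determined by the element $a \isbydef f(\pi^i) \in R$. The only constraint on $a$ comes from the relation $\pi^{n-i} \cdot \pi^i = \pi^n = 0$, which forces $\pi^{n-i} a = f(\pi^n) = 0$, i.e.\ $a \in \Ann_R(\pi^{n-i})$.

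The key step is therefore to identify $\Ann_R(\pi^{n-i}) = (\pi^i)$. The inclusion $(\pi^i) \subseteq \Ann_R(\pi^{n-i})$ is immediate. For the reverse, I would use lengths: multiplication by $\pi^{n-i}$ gives a short exact sequence $0 \to \Ann_R(\pi^{n-i}) \to R \to (\pi^{n-i}) \to 0$, and additivity of length (using the composition series $R \supsetneq (\pi) \supsetneq \cdots \supsetneq (\pi^n) = 0$, which gives length $n$ for $R$ and length $i$ for $(\pi^{n-i})$) shows that $\Ann_R(\pi^{n-i})$ has length $n - i$; since $(\pi^i)$ already has length $n-i$ and sits inside, the two ideals coincide. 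Alternatively one can argue directly: any $x \ne 0$ in $R$ is of the form $u \pi^j$ with $u$ a unit and $0 \le j < n$, and $\pi^{n-i} u \pi^j = 0$ forces $\pi^{n-i+j} = 0$, hence $j \ge i$.

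Once $\Ann_R(\pi^{n-i}) = (\pi^i)$ is established, I can write $a = \pi^i b$ for some $b \in R$ and define the extension $\tilde f \colon R \to R$ by $\tilde f(x) = bx$; then $\tilde f(\pi^i) = \pi^i b = a = f(\pi^i)$, so $\tilde f|_{(\pi^i)} = f$, completing Baer's criterion. The only real content is the computation of the annihilator, so there is no serious obstacle; everything else is formal. (If one wants the corollary in the non-local setting, Lemma \ref{ds} reduces it immediately: $R$ is a finite product of local artinian PIRs, and a finite product of injective modules over a product of rings is injective.)
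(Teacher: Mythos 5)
Your proposal is correct and follows exactly the paper's route: Baer's criterion, reduction to the local case via the $\Hom$ decomposition of Lemma \ref{ds}, and then the explicit extension of a map on $(\pi^i)$, for which you supply the ``easy calculation'' (the identification $\Ann_R(\pi^{n-i})=(\pi^i)$) that the paper leaves to the reader.
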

\begin{proof}
According Baer's criterion (\cite{LAM}, 3.7) $R$ is injective if and only if we can extend any $R$-linear map $f: I \to R$ where $I$ is an ideal
of $R$ to a map $f': R \to R$. Using the above remarks about $\mathrm{Hom}$ we may assume that $R$ is local. We know all ideals explicitly and an
easy calculation shows that we can extend such maps. 
\end{proof}

We have the following structure theorem for modules over $R$. For any prime $\pa \subset R$ let $n_{\pa}=\mathrm{length}_{R_{\pa}}(R_{\pa})$.

\begin{theorem} \label{prod}
We have $M \cong \bigoplus_{\pa \in \mathrm{Spec}(R)} \bigoplus_{i=1}^{n_{\pa}} \left( R/\pa^i \right)^{n_{\pa,i}}$ for certain unique $n_{\pa,i} \in \Z_{\geq 0}$. 
\end{theorem}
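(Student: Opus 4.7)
The plan is to first reduce to the local case using Lemma~\ref{ds}(iii), which gives $M = \bigoplus_{\pa} M_{\pa}$ with each $M_{\pa}$ a finitely generated module over the local artinian principal ideal ring $R_{\pa}$. The summand corresponding to $\pa$ in the claimed decomposition comes entirely from $M_{\pa}$, so one may assume $R$ is local with maximal ideal $\m = (\pi)$ and nilpotency index $n$, and then reassemble across $\pa \in \Spec(R)$ at the end.

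For existence in the local case, I would induct on $\dim_{R/\m}(M/\m M)$, the minimum number of generators of $M$. In the inductive step, choose $m_1 \in M$ whose annihilator $\m^{n_1}$ has $n_1$ as large as possible among annihilators of elements of $M$. This maximality forces $\m^{n_1} M = 0$: otherwise, some $m \in M$ with $\pi^{n_1} m \neq 0$ would have annihilator $\m^{n_1+k}$ for some $k \geq 1$, contradicting maximality. Thus $M$ is naturally a module over $R' := R/\m^{n_1}$, and over $R'$ the cyclic submodule $\langle m_1 \rangle$ is free of rank one. Applying Corollary~\ref{inj} to $R'$ (itself a local artinian principal ideal ring) shows that $R'$ is injective as an $R'$-module, so the identity map $\langle m_1 \rangle \to \langle m_1 \rangle$ extends to a retraction $\phi : M \to \langle m_1 \rangle$, giving $M = \langle m_1 \rangle \oplus \ker\phi$. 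Since $\langle m_1 \rangle$ is a nonzero cyclic direct summand, $\dim_{R/\m}(\ker\phi / \m \ker\phi) = \dim_{R/\m}(M / \m M) - 1$, and the induction closes.

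For uniqueness, I would read off each $n_i$ as an intrinsic invariant of $M$ in the local case. Define
\[
d_k := \dim_{R/\m}\bigl(\m^k M / \m^{k+1} M\bigr)
\]
for $0 \le k \le n$, with the convention $d_n = 0$. A direct computation on each cyclic module $R/\m^j$ shows that $\m^k(R/\m^j)/\m^{k+1}(R/\m^j)$ is one-dimensional over $R/\m$ when $k < j$ and zero otherwise. Hence any decomposition as in the theorem yields $d_k = \sum_{j > k} n_j$, so that $n_i = d_{i-1} - d_i$. As the $d_k$ depend only on $M$, the multiplicities $n_i$ are determined, and gluing over $\pa$ gives uniqueness of the $n_{\pa,i}$.

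The main obstacle is splitting off the cyclic summand $\langle m_1 \rangle$, because $\langle m_1 \rangle \cong R/\m^{n_1}$ is not injective as an $R$-module in general. The decisive move is the reduction of scalars from $R$ to $R' = R/\m^{n_1}$, which is legal precisely because $\m^{n_1} M = 0$; over $R'$ the submodule $\langle m_1 \rangle$ becomes free of rank one, and Corollary~\ref{inj} then produces the retraction immediately. Everything else—the reduction to local via Lemma~\ref{ds}, the bookkeeping on generating sets, and the computation of the $d_k$—is routine.
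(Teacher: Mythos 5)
Your proposal is correct. The reduction to the local case via Lemma~\ref{ds} is exactly what the paper does; the difference is that after that step the paper simply cites an external reference (Theorem~3.3.3 of the author's thesis) for the structure theorem over a local artinian principal ideal ring, whereas you supply a complete, self-contained argument. Your existence proof is the standard one and it works: choosing $m_1$ with annihilator $\m^{n_1}$ of maximal exponent does force $\m^{n_1}M=0$ (since every annihilator is a power of $\m$), so $M$ becomes a module over $R'=R/\m^{n_1}$, and Corollary~\ref{inj} applied to $R'$ (itself a local artinian principal ideal ring, and its proof does not depend on the present theorem, so there is no circularity) yields the retraction splitting off the cyclic free summand $\langle m_1\rangle\cong R'$; the minimal number of generators then drops by one and the induction closes. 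Your uniqueness argument via the invariants $d_k=\dim_{R/\m}(\m^kM/\m^{k+1}M)$ and the formula $n_i=d_{i-1}-d_i$ is also correct. What your version buys is independence from the unpublished reference and a nice reuse of Corollary~\ref{inj}; what the paper's version buys is brevity and, as it notes, a slightly stronger statement from the cited source.
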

\begin{proof}
Use Lemma \ref{ds} to reduce to the local case and notice that $R_{\pa}/\pa^i R_{\pa} \cong R/\pa^i$ using exactness of localization. Now use
\cite{KO1}, Theorem 3.3.3 to obtain the result (we actually obtain a stronger result). 
\end{proof}

Recall that $M$ is called \emph{semi-simple} if it is a direct sum of simple submodules. That means in the previous theorem that $M$ is semi-simple iff for all $\pa \in \mathrm{Spec}(R)$ and $i \neq 1$ we have $n_{\pa,i}=0$. 

\begin{corollary} \label{hom}
We have $M \cong_R \mathrm{Hom}_R(M,R)$.
\end{corollary}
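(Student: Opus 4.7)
The plan is to reduce to the local case and then use the explicit structure theorem. By Lemma \ref{ds}(ii)--(iv), both $M$ and $\mathrm{Hom}_R(M,R)$ decompose compatibly as direct sums indexed by $\mathrm{Spec}(R)$, so it suffices to establish the isomorphism when $R$ is an artinian local principal ideal ring with maximal ideal $\m = (\pi)$ and $\m^n = 0$.

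In this local setting, Theorem \ref{prod} gives $M \cong \bigoplus_{i=1}^{n} (R/\m^i)^{n_i}$ for certain $n_i \in \Z_{\geq 0}$. Since $\mathrm{Hom}_R(-,R)$ turns finite direct sums into direct sums, it is enough to prove $\mathrm{Hom}_R(R/\m^i, R) \cong_R R/\m^i$ for each $1 \leq i \leq n$. An $R$-linear map $R/\m^i \to R$ is the same as an element $r \in R$ killed by $\m^i$, that is, an element of $\mathrm{Ann}_R(\m^i)$. Using that every ideal of $R$ is of the form $\m^j$ with $0 \leq j \leq n$, one checks easily that $\mathrm{Ann}_R(\m^i) = \m^{n-i}$.

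Finally, the map $R \to \m^{n-i}$ sending $1 \mapsto \pi^{n-i}$ is a surjective $R$-linear map with kernel exactly $\m^i$ (using that $R$ is a principal ideal ring and that $\pi^{n-i}r = 0$ iff $r \in \m^i$, which follows again from the list of all ideals). Hence $\m^{n-i} \cong R/\m^i$, which gives the desired identification. Assembling everything, $M \cong \bigoplus_i (R/\m^i)^{n_i} \cong \bigoplus_i \mathrm{Hom}_R(R/\m^i, R)^{n_i} \cong \mathrm{Hom}_R(M,R)$.

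The only mildly subtle point is that the isomorphism is not canonical: it depends both on the decomposition provided by Theorem \ref{prod} and on the choice of generator $\pi$ of $\m$ in each local factor. This is consistent with the lemma's statement, which asserts only the existence of an abstract $R$\dash linear isomorphism. No deeper obstacle is expected.
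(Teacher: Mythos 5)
Your proof is correct and follows the same route as the paper: reduce to the local case via Lemma \ref{ds}, reduce to cyclic modules $R/\m^i$ via Theorem \ref{prod}, and then verify $\Hom_R(R/\m^i,R)\cong \Ann_R(\m^i)=\m^{n-i}\cong R/\m^i$ — exactly the calculation the paper leaves to the reader. No issues.
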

\begin{proof}
Using Lemma \ref{ds} we may assume that $(R,\m)$ is local. By the previous theorem, we only have to check it for the
modules of the form $M=R/\m^i$ for some $i \in \Z_{\geq 1}$. This calculation is left to the reader. 
\end{proof}

We now define the upper and lower root of $M$. Recall that for $r \in R$ we define
\begin{eqnarray*}
M[r] &=& \{ m \in M: rm=0 \}, \\
rM &=& \{rm: m \in M \}.
\end{eqnarray*}

\begin{definition}
We define the \emph{lower root} of $M$ as
\begin{eqnarray*}
\mathrm{lr}(M)= \sum_{r \in R} \left( rM \cap M[r] \right). 
\end{eqnarray*}
We define the \emph{upper root} of $M$ as
\begin{eqnarray*}
\mathrm{ur}(M)= \bigcap_{r \in R} \left( rM + M[r] \right). 
\end{eqnarray*}
\end{definition}

\begin{remark}
Let $M'$ be another finitely generated $R$-module. Then $\mathrm{lr}(M \oplus M')=\mathrm{lr}(M) \oplus \mathrm{lr}(M')$. A similar statement holds for the upper root. 
\end{remark}

\begin{lemma} \label{loup}
The following statements hold.
\begin{enumerate}
\item
Let $\pa \in \mathrm{Spec}(R)$ and let $M=R/\pa^i$ where $0 \leq i \leq n_{\pa}$. Then $\mathrm{lr}(M)=\pa^{\lceil \frac{i}{2} \rceil}/\pa^i$ and $\mathrm{ur}(M)=\pa^{\lfloor \frac{i}{2} \rfloor}/\pa^i$. Stated differently, the lower root respectively upper root of such a cyclic module of length $i$ is the unique submodule of length $\lfloor \frac{i}{2} \rfloor$ respectively $\lceil \frac{i}{2} \rceil$.
\item
We have $\mathrm{lr}(M) \subseteq \mathrm{ur}(M)$ and $\mathrm{ur}(M)/\mathrm{lr}(M)$ is semi-simple.
\end{enumerate}
\end{lemma}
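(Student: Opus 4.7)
The plan is to reduce both statements to a single direct computation on cyclic modules in a local artinian principal ideal ring. First I would observe that the decomposition $M=\bigoplus_\pa M_\pa$ from Lemma \ref{ds}(iii) is compatible with the operations $M \mapsto rM$ and $M \mapsto M[r]$: writing $r=(r_\pa)_\pa$, we have $rM=\bigoplus r_\pa M_\pa$ and $M[r]=\bigoplus M_\pa[r_\pa]$, and since the component $r_{\pa_0}$ can be chosen independently of the others, both $\mathrm{lr}$ and $\mathrm{ur}$ split as direct sums over the primes, with each summand computable inside $R_\pa$. Combined with the remark that the roots commute with arbitrary direct sums of modules, Theorem \ref{prod} then lets us assume $(R,\m)$ is local with uniformizer $\pi$ and $M=R/\m^i$ for statement (i), and in statement (ii) reduces us to checking each cyclic summand.

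For (i), every $r\in R$ is either zero or of the form $u\pi^j$ with $u$ a unit and $0\leq j\leq n$, so it suffices to range over $r=\pi^j$. A direct check gives $\pi^j M=\m^{\min(j,i)}/\m^i$ and $M[\pi^j]=\m^{\max(i-j,0)}/\m^i$; for $0\leq j\leq i$ this yields
\[
\pi^j M\cap M[\pi^j]=\m^{\max(j,i-j)}/\m^i,\qquad \pi^j M+M[\pi^j]=\m^{\min(j,i-j)}/\m^i,
\]
while values $j>i$ contribute $0$ to the sum and the whole of $M$ to the intersection and so are irrelevant. The lower root is then the sum of these submodules, which corresponds to the smallest possible exponent $\max(j,i-j)$, namely $\lceil i/2\rceil$; the upper root is their intersection, corresponding to the largest $\min(j,i-j)$, namely $\lfloor i/2\rfloor$. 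This gives exactly the formulas in (i).

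For (ii), by the reduction above it is enough to verify the assertions on $M=R/\m^i$, where they follow from (i). The inclusion $\mathrm{lr}(M)\subseteq\mathrm{ur}(M)$ is immediate from $\lceil i/2\rceil\geq\lfloor i/2\rfloor$, and the quotient
\[
\mathrm{ur}(M)/\mathrm{lr}(M)\cong\m^{\lfloor i/2\rfloor}/\m^{\lceil i/2\rceil}
\]
is $0$ when $i$ is even and isomorphic to $R/\m$ (hence simple) when $i$ is odd, so in either case it is semi-simple. The only step that requires a moment of care is the compatibility of $\mathrm{lr}$ and $\mathrm{ur}$ with the decomposition of Lemma \ref{ds}, but once that is recorded the remaining work is a routine computation; no serious obstacle is anticipated.
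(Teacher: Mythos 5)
Your proposal is correct and matches the paper's intent: the paper's proof simply declares part (i) ``a calculation which is left to the reader'' and derives (ii) from Theorem \ref{prod}, which is exactly the reduction-plus-computation you carry out (your explicit verification of the compatibility of $\mathrm{lr}$ and $\mathrm{ur}$ with the decomposition of Lemma \ref{ds} is a worthwhile detail the paper glosses over). Nothing further is needed.
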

\begin{proof}
The first statement is a calculation which is left to the reader and the second statement now follows from Theorem \ref{prod}. 
\end{proof}

\begin{remark}
From Theorem \ref{prod} and Lemma \ref{loup} we see that $\mathrm{lr}(M)=0$ $\iff$ $M$ is semi-simple. 
\end{remark}

\section{Bilinear forms}

From now on let $N$ be an $R$-module such that $N \cong_R R$ and let $\bi: M \times M \to N$ be a symmetric $R$-bilinear form where $R$ is
an artinian principal ideal ring. 

\begin{definition}
The form $\bi$ is called \emph{non-degenerate} if the map
\begin{eqnarray*}
M &\to& \mathrm{Hom}_R(M,N) \\
m &\mapsto& \left( m' \mapsto \langle m,m' \rangle \right)
\end{eqnarray*}
is an isomorphism. 
\end{definition}

Recall that for a submodule $M' \subseteq M$ we define $M'^{\perp}=\{ a \in M: \langle a, M' \rangle=0 \}$, which is an $R$-submodule of $M$. We say
that $M=M_1 \perp M_2$ if $M=M_1 \oplus M_2$ and $M_1 \subseteq M_2^{\perp}$. 

\begin{lemma} \label{orthogonal} \label{sir0}
Let $M'$ be an $R$-module and let $\bi': M'\times M' \to N$ be a non-degenerate symmetric bilinear form. Let $\varphi: M' \to M$ be an $R$-linear map
which respects the symmetric bilinear forms. Then $\varphi$ is injective and viewing $M'$ as a submodule of $M$ we have $M= M' \perp M'^{\perp}$ with
respect to $\langle\,\, ,\,\rangle$. Furthermore, $\langle\,\, ,\,\rangle$ is non-degenerate iff $M'^{\perp}$ is non-degenerate.
\end{lemma}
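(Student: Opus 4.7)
The plan is to use the defining bijection $M' \to \Hom_R(M', N)$, $a \mapsto (b \mapsto \langle a, b\rangle')$, that encodes non-degeneracy of $\bi'$, and derive each assertion from its injectivity or surjectivity. Injectivity of $\varphi$ is immediate: if $\varphi(a) = 0$, then $\langle a, b\rangle' = \langle \varphi(a), \varphi(b)\rangle = 0$ for every $b \in M'$, and injectivity of the pairing map of $\bi'$ forces $a = 0$. From here on I identify $M'$ with its image $\varphi(M') \subseteq M$, and note that the restriction of $\bi$ to $M' \times M'$ equals $\bi'$.

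For the orthogonal decomposition I would verify $M' \cap M'^{\perp} = 0$ and $M = M' + M'^{\perp}$ separately. Any $x \in M' \cap M'^{\perp}$ satisfies $\langle x, y\rangle = 0$ for all $y \in M'$, so by the same injectivity applied to $\bi'$ we get $x = 0$. For the sum, given $m \in M$ the assignment $y \mapsto \langle m, y\rangle$ is an element of $\Hom_R(M', N)$; by surjectivity of the pairing map for $\bi'$ there exists $m' \in M'$ with $\langle m', y\rangle = \langle m, y\rangle$ for all $y \in M'$, whence $m - m' \in M'^{\perp}$ and $m = m' + (m - m')$. Combined with the built-in orthogonality $M' \subseteq (M'^{\perp})^{\perp}$, this yields $M = M' \perp M'^{\perp}$.

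For the final equivalence, the canonical decomposition $\Hom_R(M, N) \cong \Hom_R(M', N) \oplus \Hom_R(M'^{\perp}, N)$ coming from $M = M' \oplus M'^{\perp}$ makes the pairing map $M \to \Hom_R(M, N)$ block-diagonal: because $M' \perp M'^{\perp}$, the off-diagonal terms vanish, the $M'$-block is the pairing map of $\bi'$, and the $M'^{\perp}$-block is the pairing map of the restriction of $\bi$ to $M'^{\perp} \times M'^{\perp}$. Since the first block is an isomorphism by hypothesis, the whole map is an isomorphism iff the second block is, proving that $\bi$ is non-degenerate iff $\bi$ restricted to $M'^{\perp}$ is. There is no real obstacle; the only step needing attention is the block-diagonality, which uses $M' \perp M'^{\perp}$ to annihilate the off-diagonal entries.
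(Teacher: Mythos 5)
Your proof is correct and complete. The paper itself gives no argument here beyond noting that injectivity is easy and citing \cite{KO1}, Theorem 1.1.8 for the rest; the argument you give --- injectivity and $M'\cap M'^{\perp}=0$ from injectivity of the pairing map of $\bi'$, the sum $M=M'+M'^{\perp}$ from its surjectivity, and the final equivalence from block-diagonality of the pairing map of $\bi$ with respect to $M=M'\oplus M'^{\perp}$ --- is the standard one and fills that gap correctly, with the key observation (that orthogonality kills both off-diagonal blocks, using symmetry of the form) properly identified.
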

\begin{proof}
It is very easy to see that $\varphi$ is injective. See \cite{KO1}, Theorem 1.1.8 for the proof of the rest.
\end{proof}

We have the following useful lemma.

\begin{lemma} \label{ds2}
We have
\begin{eqnarray*}
M &=& \perp_{\pa \in \mathrm{Spec}(R)} M_{\pa}.
\end{eqnarray*}
and the form $\bi$ naturally induces for any prime $\pa \in \mathrm{Spec}(R)$ a form 
\begin{eqnarray*}
\bi_{\pa}: M_{\pa} \times M_{\pa} &\to& N_{\pa} \\
(\frac{x}{s},\frac{y}{t}) &\mapsto& \frac{ \langle x,y \rangle }{st}.
\end{eqnarray*}
The form $\bi$ is non-degenerate iff all the $\bi_{\pa}$ are non-degenerate as symmetric $R_{\pa}$-bilinear forms. 
\end{lemma}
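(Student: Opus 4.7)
The plan is to leverage Lemma \ref{ds}: since $R$ is artinian, the isomorphism $R = \prod_{\pa \in \Spec(R)} R_\pa$ is implemented by orthogonal idempotents $e_\pa \in R$ with $\sum_\pa e_\pa = 1$ and $e_\pa e_\qa = 0$ for $\pa \neq \qa$. Under this, $M_\pa = e_\pa M$ (so localization coincides with multiplication by the idempotent), and similarly $N_\pa = e_\pa N$. With this in hand, the orthogonality of the direct sum decomposition reduces to a one-line idempotent argument: for $x \in M_\pa$ and $y \in M_\qa$ with $\pa \neq \qa$, bilinearity gives $\langle x, y\rangle = \langle e_\pa x, e_\qa y\rangle = e_\pa e_\qa \langle x, y\rangle = 0$. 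Combined with Lemma \ref{ds} iii, this yields $M = \perp_\pa M_\pa$.

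Next I would verify that the formula for $\bi_\pa$ is well-defined and coincides with the restriction of $\bi$ to the summand $M_\pa \subseteq M$. Since the denominators $s, t$ are invertible in $R_\pa$, the element $\langle x, y\rangle / (st)$ makes sense in $N_\pa$, and bilinearity of $\bi$ ensures independence of the chosen representatives. Under the identification $M_\pa = e_\pa M$, the restriction of $\bi$ clearly takes values in $e_\pa N = N_\pa$ and matches $\bi_\pa$ on elements of the form $x/1$; by $R_\pa$-bilinearity this extends to arbitrary $x/s$, $y/t$.

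For the equivalence of non-degeneracy, I consider the canonical map $\varphi: M \to \Hom_R(M,N)$. Using the orthogonality just established, $\varphi$ sends the summand $M_\pa$ into the subspace of $R$-linear maps vanishing on every $M_\qa$ with $\qa \neq \pa$, which is identified with $\Hom_{R_\pa}(M_\pa,N_\pa)$ by Lemma \ref{ds} iv. Hence $\varphi$ decomposes as $\bigoplus_\pa \varphi_\pa$, where each $\varphi_\pa$ is the canonical map associated to $\bi_\pa$, and $\varphi$ is an isomorphism iff each $\varphi_\pa$ is. The only thing requiring care is the bookkeeping needed to identify the localized form with the restricted form through the idempotent decomposition; once that identification is made, every assertion in the lemma is formal.
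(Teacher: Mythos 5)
Your proposal is correct, and it is the argument the paper intends: the paper's own ``proof'' is just ``left to the reader,'' with Lemma \ref{ds} supplying the product decomposition $R=\prod_{\pa}R_{\pa}$ by orthogonal idempotents that you exploit. The idempotent computation $\langle e_{\pa}x,e_{\qa}y\rangle=e_{\pa}e_{\qa}\langle x,y\rangle=0$ for the orthogonality, and the decomposition of $\varphi\colon M\to\Hom_R(M,N)$ into the $\varphi_{\pa}$ via Lemma \ref{ds} iv for the non-degeneracy equivalence, are exactly the expected steps and are all sound.
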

\begin{proof}
The proof is left to the reader. 
\end{proof}

\begin{theorem} \label{ar1}
Consider the map 
\begin{eqnarray*}
^{\perp}: \{ R\mathrm{-submodules\ of\ }M\} &\to& \{ R\mathrm{-submodules\ of\ }M\} \\
M' &\mapsto& M'^{\perp}.
\end{eqnarray*}
Then the following statements are equivalent:
\begin{enumerate}
\item
$\langle\,\, ,\,\rangle$ is non-degenerate;
\item
$M^{\perp}=0$;
\item
for all submodules $M' \subseteq M$ we have \[\mathrm{length}_{R}(M')+\mathrm{length}_R(M'^{\perp})=\mathrm{length}_R(M); \]
\item
$^{\perp}$ is an inclusion reversing bijection with inverse $^{\perp}$.
\end{enumerate}
\end{theorem}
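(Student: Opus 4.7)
The plan is to prove the cycle (i)$\Rightarrow$(iii)$\Rightarrow$(iv)$\Rightarrow$(ii)$\Rightarrow$(i), using the three key inputs already established: reduction to the local case (Lemma \ref{ds2}), the self-duality $M \cong \mathrm{Hom}_R(M,R)$ (Corollary \ref{hom}), and injectivity of $R$ as an $R$-module (Corollary \ref{inj}).

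For (i)$\Rightarrow$(iii), fix a submodule $M' \subseteq M$ and consider the composition
\[
M \xrightarrow{\sim} \mathrm{Hom}_R(M,N) \xrightarrow{\mathrm{res}} \mathrm{Hom}_R(M',N),
\]
where the first map is the isomorphism from non-degeneracy and the second is restriction. The kernel of this composition is exactly $M'^{\perp}$. Since $N \cong R$ is injective by Corollary \ref{inj}, the restriction map is surjective, yielding a short exact sequence $0 \to M'^{\perp} \to M \to \mathrm{Hom}_R(M',N) \to 0$. By Corollary \ref{hom} applied to $M'$ (and the fact that $N \cong R$), we have $\mathrm{length}(\mathrm{Hom}_R(M',N)) = \mathrm{length}(M')$, and additivity of length gives (iii).

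For (iii)$\Rightarrow$(iv), note that $M' \subseteq M'^{\perp\perp}$ holds tautologically. Applying (iii) twice,
\[
\mathrm{length}(M'^{\perp\perp}) = \mathrm{length}(M) - \mathrm{length}(M'^{\perp}) = \mathrm{length}(M'),
\]
so the inclusion is an equality. Hence $\perp$ is an involution on the (finite) poset of submodules, trivially inclusion-reversing, which gives (iv). The implication (iv)$\Rightarrow$(ii) is immediate by applying $\perp$ to the tautology $0^{\perp} = M$ and using that $\perp$ is its own inverse.

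Finally, for (ii)$\Rightarrow$(i), the map $\phi\colon M \to \mathrm{Hom}_R(M,N)$ defined by $m \mapsto \langle m,\cdot\rangle$ has kernel $M^{\perp} = 0$ and is therefore injective. By Lemma \ref{ds2} we may reduce to the local case, in which $M$ and $\mathrm{Hom}_R(M,N) \cong \mathrm{Hom}_R(M,R)$ have the same finite length by Corollary \ref{hom}. An injective $R$-linear map between modules of the same finite length is necessarily an isomorphism, so $\phi$ is an isomorphism and $\bi$ is non-degenerate. The main conceptual step is the use of Corollary \ref{hom}, which imports the abstract self-duality of $M$; without it one cannot deduce surjectivity of $\phi$ from injectivity, and the remaining implications become purely formal bookkeeping with lengths and the injectivity of $R$.
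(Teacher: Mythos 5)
Your proof is correct and follows essentially the same route as the paper: the cycle (i)$\Rightarrow$(iii)$\Rightarrow$(iv)$\Rightarrow$(ii)$\Rightarrow$(i), using injectivity of $R$ (Corollary \ref{inj}) for the surjectivity of restriction in (i)$\Rightarrow$(iii), the self-duality of Corollary \ref{hom} for the length counts, and the tautological inclusion $M' \subseteq (M'^{\perp})^{\perp}$ for (iii)$\Rightarrow$(iv). The only cosmetic difference is that the paper phrases the key exact sequence as $0 \to \Hom_R(M/M',R) \to \Hom_R(M,R) \to \Hom_R(M',R) \to 0$ and identifies $M'^{\perp}$ with $\varphi^{-1}(\Hom_R(M/M',R))$, whereas you take the kernel of the composite $M \to \Hom_R(M',N)$ directly; these are the same argument.
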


\begin{proof}
We may assume that $N=R$.

i $\implies$ iii: 
Let $M' \subseteq M$ be a submodule. Let $\varphi: M \to \mathrm{Hom}_R(M,R)$ be the isomorphism obtained from $\langle\,\, ,\,\rangle$. By injectivity of $R$ (Corollary \ref{inj}) we know that $0 \to \mathrm{Hom}_R(M/M',R) \to \mathrm{Hom}_R(M,R) \to \mathrm{Hom}_R(M',R) \to 0$ is exact. By definition $M'^{\perp}=\varphi^{-1}(\mathrm{Hom}_R(M/M',R))$. We find that $\mathrm{length}_R(M')+\mathrm{length}_R(M'^{\perp})=\mathrm{length}_R(M)$ by Corollary \ref{hom}.

iii $\implies$ iv: 
Let $M' \subseteq M$ be a submodule. Then we directly have $M' \subseteq \left( M'^{\perp} \right)^{\perp}$. By iii both have the same length and we have an equality as required. 

iv $\implies$ ii: 
We have
\begin{eqnarray*}
M^{\perp}=(0^{\perp})^{\perp}=0.
\end{eqnarray*}

ii $\implies$ i: 
We obtain a morphism $\varphi: M \to \mathrm{Hom}_R(M,R)$ from $\langle\,\, ,\,\rangle$. By assumption, $\varphi$ is injective. As $\mathrm{length}_R(\mathrm{Hom}_R(M,R))=\mathrm{length}_R(M)$ (Corollary \ref{hom}), it follows that the map is surjective as well and we have an isomorphism.

\end{proof}

\begin{corollary} \label{per}
Assume that $\bi$ is non-degenerate. Then for all submodules $M', M'' \subseteq M$ we have
\begin{eqnarray*}
\left( M' + M'' \right)^{\perp} &=& M'^{\perp} \cap M''^{\perp} \\
\left( M' \cap M''\right)^{\perp} &=& M'^{\perp}+M''^{\perp}.
\end{eqnarray*}
\end{corollary}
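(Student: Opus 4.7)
The plan is to handle the two identities in sequence. The first identity is purely formal and requires no non-degeneracy: by definition, $x \in (M' + M'')^{\perp}$ iff $\langle x, m' + m''\rangle = 0$ for every $m' \in M'$ and $m'' \in M''$; by bilinearity this splits as $\langle x, m'\rangle = 0$ and $\langle x, m''\rangle = 0$ for all such $m', m''$, which is exactly $x \in M'^{\perp} \cap M''^{\perp}$. So the first line falls out of the definition.

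For the second identity, I would derive it from the first by applying $^{\perp}$ twice, using Theorem \ref{ar1}(iv), which tells us $^{\perp}$ is an involution on submodules of $M$. Apply the first identity to the submodules $M'^{\perp}$ and $M''^{\perp}$ in place of $M'$ and $M''$:
\begin{eqnarray*}
\bigl(M'^{\perp} + M''^{\perp}\bigr)^{\perp} &=& (M'^{\perp})^{\perp} \cap (M''^{\perp})^{\perp} \;=\; M' \cap M'',
\end{eqnarray*}
where the second equality uses Theorem \ref{ar1}(iv). Now apply $^{\perp}$ one more time: the left-hand side becomes $M'^{\perp} + M''^{\perp}$, again by Theorem \ref{ar1}(iv), and the right-hand side becomes $(M' \cap M'')^{\perp}$, which gives the desired formula.

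There is essentially no obstacle here: all the work was done in proving Theorem \ref{ar1}, whose equivalence between non-degeneracy and $^{\perp}$ being an inclusion-reversing involution is exactly what converts the easy inclusion $M'^{\perp} + M''^{\perp} \subseteq (M' \cap M'')^{\perp}$ into an equality. The only small point to note is that this duality is valid because $^{\perp}$ is a bijection on all submodules, so we may freely substitute $M'^{\perp}$ and $M''^{\perp}$ for $M'$ and $M''$ in the already-established first identity.
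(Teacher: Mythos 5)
Your proof is correct and matches the paper's (one-line) argument, which likewise deduces both identities from Theorem \ref{ar1}(iv); your write-up simply supplies the details, proving the first identity directly from bilinearity and obtaining the second by dualizing via the involution $^{\perp}$. No issues.
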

\begin{proof}
Both properties follow from Theorem \ref{ar1} iv. 
\end{proof}

\begin{corollary} \label{ji}
We have $\mathrm{lr}(M) \subseteq \mathrm{ur}(M)^{\perp} \subseteq \mathrm{lr}(M)^{\perp}$. If $\bi$ is non-degenerate, we have $\mathrm{lr}(M)^{\perp}=\mathrm{ur}(M)$, $\mathrm{lr}(M) \subseteq \mathrm{lr}(M)^{\perp}$ and $\mathrm{lr}(M)^{\perp}/\mathrm{lr}(M)$ is semi-simple
\end{corollary}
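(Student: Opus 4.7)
The plan is to separate the two inclusions of the first chain, and then bootstrap to the non-degenerate statements via length comparison.

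First, I would establish $\mathrm{lr}(M) \subseteq \mathrm{ur}(M)^{\perp}$ by a direct computation on generators. A typical generator of $\mathrm{lr}(M)$ has the shape $x = rm$ with $rx = 0$, so $r^{2}m = 0$. Any $y \in \mathrm{ur}(M)$ lies in $rM + M[r]$ for this particular $r$, so write $y = rn + z$ with $rz = 0$. Bilinearity gives
\[
\langle x, y \rangle = \langle rm, rn \rangle + \langle rm, z \rangle = \langle r^{2}m, n \rangle + \langle m, rz \rangle = 0,
\]
and extending additively over the sum defining $\mathrm{lr}(M)$ concludes this inclusion. The second inclusion $\mathrm{ur}(M)^{\perp} \subseteq \mathrm{lr}(M)^{\perp}$ is then immediate from the anti-monotonicity of $^{\perp}$ applied to the inclusion $\mathrm{lr}(M) \subseteq \mathrm{ur}(M)$ of Lemma \ref{loup} ii. Note that neither step uses non-degeneracy.

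Now assume $\bi$ is non-degenerate. Applying $^{\perp}$ to $\mathrm{lr}(M) \subseteq \mathrm{ur}(M)^{\perp}$ and using the involutive property of $^{\perp}$ from Theorem \ref{ar1} iv yields $\mathrm{ur}(M) = \mathrm{ur}(M)^{\perp\perp} \subseteq \mathrm{lr}(M)^{\perp}$. To upgrade this to an equality I would count lengths. By Theorem \ref{ar1} iii,
\[
\mathrm{length}_{R}(\mathrm{lr}(M)^{\perp}) = \mathrm{length}_{R}(M) - \mathrm{length}_{R}(\mathrm{lr}(M)),
\]
so it suffices to show $\mathrm{length}_{R}(\mathrm{lr}(M)) + \mathrm{length}_{R}(\mathrm{ur}(M)) = \mathrm{length}_{R}(M)$. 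Using Lemma \ref{ds2} to reduce to the local case and Theorem \ref{prod} to write $M$ as a direct sum of modules $R/\pa^{i}$, and using that both $\mathrm{lr}$ and $\mathrm{ur}$ commute with direct sums, the problem reduces to the cyclic identity $\lfloor i/2 \rfloor + \lceil i/2 \rceil = i$, which is exactly what Lemma \ref{loup} i records. Combining gives $\mathrm{ur}(M) = \mathrm{lr}(M)^{\perp}$.

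Finally, the relation $\mathrm{lr}(M) \subseteq \mathrm{lr}(M)^{\perp}$ is just $\mathrm{lr}(M) \subseteq \mathrm{ur}(M) = \mathrm{lr}(M)^{\perp}$, and the quotient $\mathrm{lr}(M)^{\perp}/\mathrm{lr}(M) = \mathrm{ur}(M)/\mathrm{lr}(M)$ is semi-simple by Lemma \ref{loup} ii. The only nontrivial step is the initial orthogonality calculation, but since the two summands collapse cleanly via $r^{2}m = 0$ and $rz = 0$, this is routine; the reduction to cyclic modules for the length identity is the place where one must invoke the structural machinery, but everything needed is already in hand.
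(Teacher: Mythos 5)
Your proof is correct, and it reaches the key equality $\mathrm{lr}(M)^{\perp}=\mathrm{ur}(M)$ by a different mechanism than the paper. The paper computes $\mathrm{lr}(M)^{\perp}$ head-on: it observes that $(\m^{k}M)^{\perp}=M[\m^{k}]$ (hence also $(M[\m^{k}])^{\perp}=\m^{k}M$) in the non-degenerate case, and then pushes $^{\perp}$ through the sum and intersections defining $\mathrm{lr}$ and $\mathrm{ur}$ via Corollary \ref{per}, so that $\mathrm{lr}(M)^{\perp}=\bigcap_{k}\bigl(M[\m^{k}]+\m^{k}M\bigr)=\mathrm{ur}(M)$ falls out as an identity of submodule lattices. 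You instead prove the unconditional orthogonality $\mathrm{lr}(M)\subseteq\mathrm{ur}(M)^{\perp}$ by the explicit generator computation (which the paper leaves as ``follows easily from the definitions''), take perps to get $\mathrm{ur}(M)\subseteq\mathrm{lr}(M)^{\perp}$, and then force equality by the length count $\mathrm{length}_{R}(\mathrm{lr}(M))+\mathrm{length}_{R}(\mathrm{ur}(M))=\mathrm{length}_{R}(M)$, reduced to the cyclic identity $\lfloor i/2\rfloor+\lceil i/2\rceil=i$ via Theorem \ref{prod} and Lemma \ref{loup}~i. Both routes ultimately rest on Theorem \ref{ar1}; yours additionally invokes the structure theorem at this point (the paper defers that dependence to Corollary \ref{hom} inside Theorem \ref{ar1}), while the paper's route needs the small extra observation $(\m^{k}M)^{\perp}=M[\m^{k}]$. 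What your version buys is that the one-sided containment $\mathrm{ur}(M)\subseteq\mathrm{lr}(M)^{\perp}$ is exhibited as a formal consequence of the form-free inclusion, with non-degeneracy entering only through the double-perp and the length formula; what the paper's version buys is a shorter, purely lattice-theoretic derivation once Corollary \ref{per} is in hand. All the auxiliary facts you cite (anti-monotonicity of $^{\perp}$, compatibility of $\mathrm{lr}$ and $\mathrm{ur}$ with direct sums, Theorem \ref{ar1}~iii--iv) are available where you use them, so there is no gap.
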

\begin{proof}
Using Lemma \ref{ds2} we may assume that $(R,\m)$ is local. The first statement follows easily from the definitions. That
$\mathrm{lr}(M)^{\perp}=\mathrm{ur}(M)$ follows from the observation that $\left( \m^k M\right)^{\perp}=M[\m^k]$ in the
non-degenerate case and Corollary \ref{per}. We then directly have that $\mathrm{lr}(M) \subseteq \mathrm{lr}(M)^{\perp}$. The last statement now
follows by using Lemma \ref{loup} ii.
\end{proof}

\begin{lemma} \label{hoa}
For any $\pa \in \mathrm{Spec}(R)$ the form $\bi_{\pa}$ induces a symmetric $R/\pa$-bilinear form
\begin{eqnarray*}
\langle\,\, ,\,\rangle_{\pa,\mathrm{odd}}: \mathrm{ur}(M_{\pa})/\mathrm{lr}(M_{\pa}) \times \mathrm{ur}(M_{\pa})/\mathrm{lr}(M_{\pa}) &\to& N_{\pa}[\pa] \\
\left([x],[y] \right) &\mapsto& \langle x,y \rangle_{\pa}.
\end{eqnarray*} 
If $\langle\,\, ,\,\rangle$ is non-degenerate, then $\langle\,\, ,\,\rangle_{\pa, \mathrm{odd}}$ is non-degenerate as well. 
\end{lemma}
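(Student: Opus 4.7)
The plan is to reduce to the local case via Lemma \ref{ds2} (so $R$ is local with maximal ideal $\m = (\pi)$ and $N \cong R$) and, before anything else, to establish the key inclusion $\pi \cdot \mathrm{ur}(M) \subseteq \mathrm{lr}(M)$. By Theorem \ref{prod} it suffices to check this on a cyclic summand $R/\m^i$, where Lemma \ref{loup}.i identifies the two sides with $\m^{\lfloor i/2\rfloor+1}/\m^i$ and $\m^{\lceil i/2\rceil}/\m^i$; the inclusion then reduces to the trivial inequality $\lfloor i/2\rfloor + 1 \geq \lceil i/2\rceil$.

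Given this, the form lands in $N[\m]$: for $x,y \in \mathrm{ur}(M)$,
\[
\pi\langle x,y\rangle = \langle \pi x,y\rangle = 0,
\]
because $\pi x \in \mathrm{lr}(M) \subseteq \mathrm{ur}(M)^\perp$ by the unconditional half of Corollary \ref{ji}. The same orthogonality $\langle \mathrm{lr}(M),\mathrm{ur}(M)\rangle=0$ makes the pairing independent of the coset representatives in either coordinate, and since both the source $\mathrm{ur}(M)/\mathrm{lr}(M)$ and the target $N[\m]$ are $\m$-torsion, $R$-bilinearity of $\bi_\pa$ descends to $R/\m$-bilinearity. Symmetry is inherited from $\bi_\pa$.

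For non-degeneracy, assume $\bi$ is non-degenerate and suppose $x \in \mathrm{ur}(M)$ pairs trivially with every element of $\mathrm{ur}(M)$; then $x \in \mathrm{ur}(M)^\perp$. Corollary \ref{ji} gives $\mathrm{lr}(M)^\perp = \mathrm{ur}(M)$, and applying $^\perp$ once more via Theorem \ref{ar1}.iv yields $\mathrm{ur}(M)^\perp = \mathrm{lr}(M)$; hence $x \in \mathrm{lr}(M)$ and $[x]=0$. Thus the induced map
\[
\mathrm{ur}(M)/\mathrm{lr}(M) \longrightarrow \Hom_{R/\m}\bigl(\mathrm{ur}(M)/\mathrm{lr}(M),\, N[\m]\bigr)
\]
is injective, and since $N[\m]$ is a $1$-dimensional $R/\m$-vector space, source and target have equal finite $R/\m$-dimension, so injectivity forces bijectivity. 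The only genuine computation in the whole argument is the preliminary inclusion $\pi\cdot\mathrm{ur}(M)\subseteq\mathrm{lr}(M)$ — the ceiling/floor bookkeeping on cyclic summands — and this is the step I expect to be the main, albeit mild, obstacle; everything else is formal manipulation of Theorem \ref{ar1} and Corollary \ref{ji}.
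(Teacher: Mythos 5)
Your proof is correct and follows the same route the paper's one-line proof indicates: localize via Lemma \ref{ds2}, use Corollary \ref{ji} for well-definedness and for the target landing in $N[\m]$, and combine Corollary \ref{ji} with Theorem \ref{ar1}.iv to get non-degeneracy. Your only addition is the explicit cyclic-summand check of $\pi\cdot\mathrm{ur}(M)\subseteq\mathrm{lr}(M)$, which the paper would instead read off from the semi-simplicity of $\mathrm{ur}(M)/\mathrm{lr}(M)$ in Lemma \ref{loup}.ii.
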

\begin{proof}
The first statement follows from Corollary \ref{ji} and Lemma \ref{ds2}. The second statement follows from Corollary \ref{ji}, Theorem \ref{ar1} and
Lemma \ref{ds2}. 
\end{proof}

Notice that $N_{\pa}[\pa] \cong R_{\pa}[\pa] \cong R/\pa$, and $\mathrm{ur}(M_{\pa})/\mathrm{lr}(M_{\pa})$ is a vector space over $R/\pa$. 

\begin{definition} \label{odd}
Let $\pa \in \mathrm{Spec}(R)$. We define $\langle\,\, ,\,\rangle_{\pa,\mathrm{odd}}$ to be the form obtained from $\langle\,\, ,\,\rangle$ on
$\mathrm{ur}(M_{\pa})/\mathrm{lr}(M_{\pa})$ as in Lemma \ref{hoa}. From $\langle\,\, ,\,\rangle_{\pa}$ we obtain a form $\langle\,\,
,\,\rangle_{\pa}': M_{\pa}/M_{\pa}[\pa] \times M_{\pa}/M_{\pa}[\pa] \to N_{\pa}/N_{\pa}[\pa]$. This form is a non-degenerate symmetric
$R_{\pa}/R_{\pa}[\pa]$-bilinear form if $\bi$ is non-degenerate over $R$ by Lemma \ref{hoa} and Theorem \ref{ar1}. We define $\langle\,\,
,\,\rangle_{\pa,\mathrm{even}}=\left( \langle\,\, ,\,\rangle'_{\pa} \right)_{\mathrm{odd}}$. We will often use the notation $M_{\pa,\mathrm{odd}}$ and
$M_{\pa,\mathrm{even}}$ for these modules with their symmetric bilinear forms. Remark that the odd and even forms for a given prime $\pa$ are forms on
vector spaces over $R/\pa$.    
\end{definition}

\begin{remark} \label{rep}
In practice one can easily calculate the odd and even forms if one has the decomposition of $M$ as in Theorem \ref{prod}. This is what we did in the
introduction with the $V_{p,\mathrm{even}}$ and $V_{p,\mathrm{odd}}$. The main difference between the exposition here and the one in the introduction
is that here we don't have to choose a generator of the maximal ideal of $R_{\pa}$.   
\end{remark}

\section{Equivalent definitions of anisotropy}

In this section we keep the assumptions and notations of the previous section. 

\begin{definition}
The form $\bi$ is called \emph{anisotropic} if for all primes $\pa \in \mathrm{Spec}(R)$ the forms $\bi_{\pa, \mathrm{even}}$ and $\bi_{\pa,
\mathrm{odd}}$ are anisotropic in the usual sense as forms over $R/\pa$ (as in the abstract). 
\end{definition}

\begin{remark}
This definition of anisotropy looks a bit weird at first sight, but there is a striking resemblance with the following Proposition 1.9 on page 147 of 
\cite{LAM2}: Suppose $F$ is a non-archimedian field with valuation $v$ with $\pi$ as a uniformizer with residue
characteristic not equal to $2$. Let $U=\{x \in F: v(x)=0\}$. Suppose we have a form $q=q_1 \perp \langle \pi \rangle q_2$ where $q_1=\langle
u_1,\ldots,u_r \rangle$, $q_2=\langle u_{r+1},\ldots,u_n\rangle$ where $u_i \in U$. Then $q$ is anisotropic over $F$ if and only if the reduction of
$q_1$ and $q_2$ to the residue field are anisotropic. 

We didn't investigate this resemblance in more detail.
\end{remark}

\begin{remark}
If $R$ is a field, then $\bi_{0,\mathrm{even}}=0$ and $\bi_{0,\mathrm{odd}}=\bi$, hence the above definition is a generalization of the normal concept
of anisotropy for forms on vector spaces. 
\end{remark}

\begin{remark}
As a non-degenerate symmetric form on a $1$-dimensional vector space is automatically anisotropic, we deduce the following. If $\bi$ is non-degenerate
and $M$ is a cyclic $R$-module, it is anisotropic. Also if $R$ is local, $\bi$ is non-degenerate, $M$ is generated by two elements and $M$ has odd
length as an $R$-module, then $\bi$ is anisotropic. 
\end{remark}

We can now state the following `equivalent' definitions of anisotropy. The proof will be given in Section \ref{pr}.

\begin{theorem} \label{main1}
Let $R$ be an artinian principal ideal ring, let $M$ be a finitely generated $R$-module and let $N$ be an $R$-module such that $N \cong_R R$.
Let $\langle\,\, ,\,\rangle: M
\times M \to N$ be a symmetric $R$-bilinear form. Consider the following statements:
\begin{enumerate}
\item
$\langle\,\, ,\,\rangle$ is anisotropic;
\item
the form $\bi$ is non-degenerate and $\mathrm{lr}(M)$ is the only submodule $L \subseteq M$ satisfying $L \subseteq L^{\perp}$ such that $L^{\perp}/L$
is semi-simple;
\item
the form $\bi$ is non-degenerate and for any submodule $L \subseteq M$ with $L \subseteq L^{\perp}$, we have $L \subseteq \mathrm{lr}(M)$ and $\mathrm{lr}(L^{\perp}/L)=\mathrm{lr}(M)/L$; 
\item 
the form $\langle\,\, ,\,\rangle$ is non-degenerate and for any submodule $L \subseteq M$ with $L \subseteq L^{\perp}$, we have $L \subseteq \mathrm{lr}(M)$; 
\item
the form $\langle\,\, ,\,\rangle$ is non-degenerate and if $x \in M$ satisfies $\langle x,x \rangle =0$, then $x \in \mathrm{lr}(M)$. 
\end{enumerate} 
Then \emph{i} $ \Longleftrightarrow$ \emph{ii} $\Longleftrightarrow$ \emph{iii} $\implies$ \emph{iv} $\Longleftrightarrow$ \emph{v}. If $2 \in \left(R/\mathrm{Ann}_R(M) \right)^*$, then all statements are equivalent. 
\end{theorem}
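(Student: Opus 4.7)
I would reduce to the local case via Lemma \ref{ds2}. Submodules, the operators $\mathrm{lr}$, $\mathrm{ur}$, $L \mapsto L^\perp$, and the odd/even forms all commute with the decomposition $M = \perp_\pa M_\pa$, so each of the five conditions is local at each prime. I may therefore assume $(R,\m)$ is local artinian principal of length $n$.

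I would dispatch the cheap equivalences first. The equivalence (iv) $\Longleftrightarrow$ (v) comes from $L = Rx$, since $Rx \subseteq (Rx)^\perp$ iff $\langle x,x\rangle = 0$, while conversely $L \subseteq L^\perp$ forces $\langle l,l\rangle = 0$ for every $l \in L$. For (iii) $\Longrightarrow$ (ii), apply (iii) to any $L$ satisfying the hypotheses of (ii): semisimplicity of $L^\perp/L$ gives $\mathrm{lr}(L^\perp/L) = 0$, and (iii) then forces $\mathrm{lr}(M)/L = 0$, so $L = \mathrm{lr}(M)$; that $\mathrm{lr}(M)$ itself satisfies the hypotheses is Corollary \ref{ji}. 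For (ii) $\Longrightarrow$ (iii), given $L \subseteq L^\perp$ I would lift $\mathrm{lr}(L^\perp/L) \subseteq L^\perp/L$ to $\widetilde L$ with $L \subseteq \widetilde L \subseteq L^\perp$; Corollary \ref{ji} applied to the non-degenerate induced form on $L^\perp/L$ identifies $\widetilde L^\perp/L$ with $\mathrm{ur}(L^\perp/L)$, so $\widetilde L \subseteq \widetilde L^\perp$ in $M$ with $\widetilde L^\perp/\widetilde L \cong \mathrm{ur}(L^\perp/L)/\mathrm{lr}(L^\perp/L)$ semisimple; by (ii), $\widetilde L = \mathrm{lr}(M)$, yielding both $L \subseteq \mathrm{lr}(M)$ and $\mathrm{lr}(L^\perp/L) = \mathrm{lr}(M)/L$. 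The implication (iii) $\Longrightarrow$ (iv) is immediate.

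The heart is (i) $\Longleftrightarrow$ (ii) in the local case. For (i) $\Longrightarrow$ (ii), anisotropy first forces non-degeneracy: given $0 \neq x \in M^\perp$, multiplying by an appropriate $\pi^k$ places $x$ in $\mathrm{ur}(M) \setminus \mathrm{lr}(M)$, where $\langle x,x\rangle = 0$ contradicts anisotropy of $\bi_{\m,\mathrm{odd}}$; if no such shift works, one descends along $M \to M/M[\m]$, where the image of $x$ detects degeneracy, and iterates until one hits an isotropic vector for $\bi_{\m,\mathrm{even}}$. With non-degeneracy in hand, the structure theorem together with $\m L^\perp \subseteq L$ (from semisimplicity of $L^\perp/L$) parametrizes any $L$ satisfying (ii)'s hypotheses by a totally isotropic subspace of $\bi_{\m,\mathrm{odd}}$ together with one of $\bi_{\m,\mathrm{even}}$; anisotropy collapses both parameters, forcing $L = \mathrm{lr}(M)$. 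The converse (ii) $\Longrightarrow$ (i) runs the correspondence backwards: a nonzero isotropic vector in $\bi_{\m,\mathrm{odd}}$ or $\bi_{\m,\mathrm{even}}$ lifts via the structure theorem to a competing $L \neq \mathrm{lr}(M)$ satisfying the hypotheses of (ii), contradicting uniqueness.

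Under the hypothesis $2 \in (R/\Ann_R(M))^*$, the polarization identity $2\langle x,y\rangle = \langle x+y,x+y\rangle - \langle x,x\rangle - \langle y,y\rangle$ allows one to upgrade (v) to (iii); concretely it delivers $\mathrm{lr}(L^\perp/L) = \mathrm{lr}(M)/L$ for every $L \subseteq L^\perp$. This equality can genuinely fail in residue characteristic $2$: for $R = \Z/4$, $M = (\Z/4)^2$ with the diagonal form and $L = R(2,2)$, one has (iv) and (v) holding but $L^\perp/L$ becomes spuriously semisimple with $\mathrm{lr}(L^\perp/L) = 0 \neq \mathrm{lr}(M)/L$, so (iii) and (ii) fail. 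The main obstacle I expect is the structural step inside (i) $\Longleftrightarrow$ (ii): verifying via the structure theorem that the parametrization of candidate totally isotropic $L$'s by pairs of isotropic subspaces in the two $R/\m$-vector-space forms is actually a bijection, so that anisotropy captures uniqueness exactly.
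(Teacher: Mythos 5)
Your reduction to the local case, the equivalences (iv) $\Longleftrightarrow$ (v) and (iii) $\Longleftrightarrow$ (ii), and the implication (iii) $\Longrightarrow$ (iv) are all correct; in particular your proof of (ii) $\Longrightarrow$ (iii) by lifting $\mathrm{lr}(L^{\perp}/L)$ to a submodule $\widetilde L$ with $L \subseteq \widetilde L \subseteq L^{\perp}$ and invoking the uniqueness in (ii) is a clean substitute for the paper's counting argument in Lemma \ref{sir5}. But the heart of the theorem, (i) $\Longleftrightarrow$ (ii), is not proved. The ``parametrization of candidate $L$'s by pairs of totally isotropic subspaces of $\bi_{\m,\mathrm{odd}}$ and $\bi_{\m,\mathrm{even}}$'' that you defer to the end is not a bijection: take $R=\Z/4\Z$, $M=(\Z/4\Z)^2$ with the hyperbolic form $\langle (a,b),(c,d)\rangle = ad+bc$. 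Here $\bi_{\m,\mathrm{odd}}$ lives on the zero space and $\bi_{\m,\mathrm{even}}$ is the hyperbolic plane over $\F_2$, which has exactly four totally isotropic subspaces, yet the submodules $L$ with $L\subseteq L^{\perp}$ and $L^{\perp}/L$ semisimple include $2M$, $R(1,0)$, $R(0,1)$, $R(2,0)$, $R(0,2)$ and $R(2,2)$ --- at least six. So the step ``anisotropy collapses both parameters'' has no argument behind it. The paper instead inducts on the exponent $r$ of $M$ via shaving: Lemma \ref{sir3} shows that anisotropy of $\rho_r(M)$ forces $\m^{r-1}M\subseteq L$ for every candidate $L$, Lemma \ref{sat} then identifies the candidates in $M$ with those in $\mathrm{Sh}(M)=M[\m^{r-1}]/\m^{r-1}M$, and Lemma \ref{threes} transports anisotropy down; the converse uses Lemma \ref{killit} for the odd form and the technical Lemma \ref{sir} for the base case $r=2$ of the even form. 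Nothing in your sketch replaces this. Your non-degeneracy argument also fails as written: after replacing $x\in M^{\perp}$ by $\pi^k x\in M[\m]$, its image in $M/M[\m]$ is zero, so ``descending along $M\to M/M[\m]$'' detects nothing; the correct reference is Theorem \ref{moma}.

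The second gap is (v) $\Longrightarrow$ (ii) when $2\in (R/\mathrm{Ann}_R(M))^*$. Polarization recovers the bilinear form from the quadratic form, but it does not by itself show that every $L$ with $L\subseteq L^{\perp}$ and $L^{\perp}/L$ semisimple equals $\mathrm{lr}(M)$. The actual difficulty is this: after shaving reduces one to proving $L^{\perp}\subseteq M[\m^{r-1}]$, an element $x\in L^{\perp}\setminus M[\m^{r-1}]$ only satisfies $\pi\langle x,x\rangle=0$, not $\langle x,x\rangle=0$, so (v) cannot be applied to $x$ directly. One must produce $y$ with $\langle x,y\rangle$ a unit multiple of $\pi^{n-r}$ (Lemma \ref{sir4}, via injectivity of $R$) and then complete the square (Lemma \ref{hensel}) to perturb $x$ to $z=x+cy$ with $\langle z,z\rangle=0$ exactly; this is precisely where division by $2$ enters, and it is the entire content of the implication. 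Your $\Z/4\Z$ counterexample showing (v) $\not\Rightarrow$ (ii) in residue characteristic $2$ is correct, but the positive direction is missing.
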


\begin{remark}
 Taking into account Remark \ref{rep}, one sees that Theorem \ref{sot} is a special case of Theorem \ref{main1}.
\end{remark}

\begin{examples}
Let $R=N=\Z/2^2\Z$. Consider the non-degenerate bilinear form $\langle\,\, ,\,\rangle$ on $M=\Z/2^2\Z \times \Z/2^2\Z$ given by the following matrix:
\begin{eqnarray*}
A= \left( \begin{array}{cc}
2 & 1 \\
1 & 2
\end{array} \right).
\end{eqnarray*}
Then one can show that iv and v hold but i, ii and iii don't hold. 
\end{examples}

\section{The odd and even forms}

In this section we assume that $(R,\m)$ is a local artinian principal ideal ring. Let $n$ be the
length of $R$ as an $R$-module, and let $\m=(\pi)$. 

Let $M$ be a finitely generated $R$-module, $N$ an $R$-module such that $N \cong_R R$ and let $\bi: M \times M \to N$ be a symmetric $R$-bilinear
form. To make the notation less heavy, we use the notation $M_{\mathrm{odd}}$ instead of
$M_{\m,\mathrm{odd}}$ and similarly for other notation that refers to the unique prime ideal $\m$ of $R$.  

\subsection{Splitting up}

We first want to have more control over $M_{\mathrm{odd}}$ and $M_{\mathrm{even}}$ and this is why we divide them up into smaller parts. 

\begin{definition}
Let $i \in \Z_{\geq 1}$ be odd. Then we define \index{$\mathrm{\rho_i(M)}$}
\begin{eqnarray*}
\rho_i(M) &=& \m^{\lfloor \frac{i}{2} \rfloor} M[\m^i]/ \left( \m^{\lfloor \frac{i}{2} \rfloor } M[\m^{i-1}]+\m^{\lfloor \frac{i}{2} \rfloor +1}M[\m^{i+1}] \right).
\end{eqnarray*}
For even $i \in \Z_{\geq 2}$ we define 
\begin{eqnarray*}
\rho_i(M) &=& \rho_{i-1} (M/M[\m]).
\end{eqnarray*}
\end{definition}

By construction these $\rho_i(M)$ are vector spaces over $R/\m$. In fact, we can view $\rho_i$ as a functor from the category of $R$-modules to the
category of $R/\m$-modules. Also notice that the functor is additive. 	

\begin{lemma} \label{soe}
The natural map 
\begin{eqnarray*}
\varphi_{\mathrm{odd}}: \bigoplus_{i \in \Z_{\geq 1} \mathrm{\ odd}} \rho_i(M) &\to& \mathrm{ur}(M)/\mathrm{lr}(M)=M_{\mathrm{odd}} \\
~([x_i])_{i \mathrm{\ odd}} &\mapsto& [\sum_{i \mathrm{\ odd}} x_i] 
\end{eqnarray*}
is an isomorphism of $R$-modules. Similarly, we obtain an isomorphism of $R$-modules
\begin{eqnarray*}
\varphi_{\mathrm{even}}: \bigoplus_{i \in \Z_{\geq 2} \mathrm{\ even}} \rho_i(M) &\to& \mathrm{ur}(M/M[\m])/\mathrm{lr}(M /M[\m]) =M_{\mathrm{even}} \\
~([x_i])_{i \mathrm{\ even}} &\mapsto& [\sum_{i \mathrm{\ even}} x_i]. 
\end{eqnarray*}
Now assume that $M \cong \left( R/\m^r \right)^s$ (where $s \in \Z_{\geq 0}$, $1 \leq r \leq n$). Then we have an $R$-linear isomorphism
\begin{eqnarray*}
\rho_i(M) \cong \left\{
\begin{array}{cc}
\left(R/\m \right)^s & i=r \\
0 & i \neq r 
\end{array}
\right. 
\end{eqnarray*}

\end{lemma}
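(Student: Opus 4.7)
The plan is to reduce every assertion to the cyclic case $M\cong(R/\m^r)^s$, exploiting that $\rho_i$, $\mathrm{ur}$, $\mathrm{lr}$, and hence the formation of $M_{\mathrm{odd}}$ and $M_{\mathrm{even}}$, are additive functors, and that $\varphi_{\mathrm{odd}}$ and $\varphi_{\mathrm{even}}$ are natural transformations between them. Combined with the structure theorem (Theorem~\ref{prod}), this reduces the entire lemma to direct inspection on a single cyclic factor.

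First I would establish the third claim. For odd $i$ and $M=(R/\m^r)^s$, the torsion submodules are computed componentwise from $(R/\m^r)[\m^j]=\m^{\max(r-j,0)}/\m^r$, so both the numerator $\m^{(i-1)/2}M[\m^i]$ and the denominator $\m^{(i-1)/2}M[\m^{i-1}]+\m^{(i+1)/2}M[\m^{i+1}]$ become explicit powers of $\m$ modulo $\m^r$, raised to the $s$. Splitting into $i<r$, $i=r$, and $i>r$, one finds the quotient is $0$ outside the middle case (in each outer case a single summand of the denominator already equals the numerator), and is $(\m^{(r-1)/2}/\m^{(r+1)/2})^s\cong(R/\m)^s$ when $i=r$. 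For even $i$ the definition $\rho_i(M)=\rho_{i-1}(M/M[\m])$ together with $M/M[\m]\cong(R/\m^{r-1})^s$ reduces everything to the odd case just settled.

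Next, to deduce that $\varphi_{\mathrm{odd}}$ is an isomorphism, I would first verify well-definedness. If $x=\pi^{(i-1)/2}y$ with $\pi^iy=0$, a direct manipulation shows $x\in\m^kM+M[\m^k]$ for every $k$, so $x\in\mathrm{ur}(M)$; and if $x=\pi^{(i\pm 1)/2}y$ lies in one of the two subspaces forming the denominator, then $\pi^{(i\pm 1)/2}x=0$, so $x\in\m^{(i\pm 1)/2}M\cap M[\m^{(i\pm 1)/2}]\subseteq\mathrm{lr}(M)$. Since source and target are both additive in $M$ and $\varphi_{\mathrm{odd}}$ is natural, the question of whether $\varphi_{\mathrm{odd}}$ is an isomorphism on $M$ reduces by the structure theorem to $M=(R/\m^r)^s$; combining the third claim with Lemma~\ref{loup} shows that both sides are zero when $r$ is even and are canonically $(\m^{(r-1)/2}M)/(\m^{(r+1)/2}M)\cong(R/\m)^s$ when $r$ is odd, on which $\varphi_{\mathrm{odd}}$ is visibly the identity. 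The corresponding statement for $\varphi_{\mathrm{even}}$ then follows by applying the just-proved odd statement to the module $M/M[\m]$.

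The main obstacle is the cyclic-case calculation of $\rho_i$ in the first step: the three sub-cases for odd $i$ involve a fair amount of floor-and-ceiling bookkeeping, and one must check carefully that the numerator is absorbed into the denominator whenever $i\neq r$, so that $\rho_i$ indeed vanishes away from $i=r$.
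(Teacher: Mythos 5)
Your proposal is correct and follows the same route as the paper's (sketched) proof: verify well-definedness directly, then use additivity of $\rho_i$, $\mathrm{ur}$, $\mathrm{lr}$ together with the structure theorem (Theorem~\ref{prod}) to reduce the bijectivity claim to cyclic modules $R/\m^r$, where everything is an explicit computation with powers of $\m$. You have simply supplied the details the paper leaves to the reader, and they check out.
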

\begin{proof}
We sketch a proof of the statements for $\varphi_{\mathrm{odd}}$ and by construction the similar statements for $\varphi_{\mathrm{even}}$ will follow.
The reader can check that $\varphi_{\mathrm{odd}}$ is well defined. To check that the map is a bijection, it suffices by Theorem \ref{prod} to check
it for modules of the form
$R/\m^r$ for $r \in \Z_{\geq 1}$, and this is left to the reader.
\end{proof}

\begin{definition} \label{goh}
We define the symmetric $R/\m$-bilinear form $\langle\,\, ,\,\rangle_{\mathrm{odd}}'$
\index{$\langle\,\, ,\,\rangle_{\mathrm{odd}}'$} to be the map making the following diagram
commute:
\[
\xymatrix{ 
\bigoplus_{i \in \Z_{\geq 1} \mathrm{\ odd}} \rho_i(M) \times \bigoplus_{i \in \Z_{\geq 1} \mathrm{\ odd}} \rho_i(M) \ar[d]^{\varphi_{\mathrm{odd}}
\times \varphi_{\mathrm{odd}}}  \ar[rrrr]^{\langle\,\, ,\,\,\rangle_{\mathrm{odd}}'} &&&& N[\m]  \ar[d]_{\mathrm{id}_{N[\m]}} \\
M_{\mathrm{odd}} \times M_{\mathrm{odd}} \ar[rrrr]_{\langle\,\, ,\,\,\rangle_{\mathrm{odd}}} &&&& N[\m].
}
\] 
We define the symmetric $R/\m$- ($n \geq 2$) respectively $0$-bilinear form ($n<2$) $\langle\,\,
,\,\rangle_{\mathrm{even}}'$ \index{$\langle\,\, ,\,\rangle_{\mathrm{even}}'$}
\[
\xymatrix{ 
\bigoplus_{i \in \Z_{\geq 2} \mathrm{\ even}} \rho_i(M) \times \bigoplus_{i \in \Z_{\geq 2} \mathrm{\ even}} \rho_i(M) \ar[d]^{\varphi_{\mathrm{even}}
\times \varphi_{\mathrm{even}}}  \ar[rrr]^{\ \ \ \ \ \ \ \ \langle\,\, ,\,\,\rangle_{\mathrm{even}}'} &&& N[\m^2]/N[\m] 
\ar[d]_{\mathrm{id}_{N[\m^2]/N[\m]}} \\
M_{\mathrm{even}} \times M_{\mathrm{even}} \ar[rrr]_{\ \ \ \ \ \ \ \ \langle\,\, ,\,\,\rangle_{\mathrm{even}}} &&& N[\m^2]/N[\m].
}
\]
\end{definition}

\begin{lemma} \label{merk}
With respect to $\bi'_{\mathrm{odd}}$ respectively $\bi'_{\mathrm{even}}$ we have orthogonal decompositions 
\begin{eqnarray*}
\perp_{i \in \Z_{\geq 1} \mathrm{\ odd}} \rho_i(M)
\end{eqnarray*} 
respectively
\begin{eqnarray*}
\perp_{i \in \Z_{\geq 2} \mathrm{\ even}} \rho_i(M).
\end{eqnarray*} 
\end{lemma}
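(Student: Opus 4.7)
Both orthogonal decompositions will follow from the same power-of-$\pi$ calculation; the content is to check, for distinct $i,j$ of the same parity, that two generators coming from $\rho_i(M)$ and $\rho_j(M)$ pair to zero in the relevant target. By $R/\m$-bilinearity and the additivity of $\rho_i$, it suffices to handle one pair of generators at a time, represented by liftings to $M$ of the explicit form $\pi^{\lfloor i/2\rfloor}y_i$ with $y_i\in M[\m^i]$ (or the obvious analogue after quotienting by $M[\m]$). The key identity I will use is
\[
\bigl\langle \pi^{a}y_i,\pi^{b}y_j\bigr\rangle = \pi^{a+b}\langle y_i,y_j\rangle,
\qquad\text{combined with}\qquad \pi^{i}\langle y_i,y_j\rangle=0
\]
(the latter because $\pi^{i}y_i=0$). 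Well-definedness of $\bi'_{\mathrm{odd}}$ and $\bi'_{\mathrm{even}}$ on the quotients defining $\rho_i(M)$ is already encoded by Definition \ref{goh} and Lemma \ref{soe}, so I only need the raw pairing computation.

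\textbf{Odd case.} Assume $i<j$ are both odd, so $j\geq i+2$. A typical generator of $\rho_i(M)$ is represented by $x_i=\pi^{(i-1)/2}y_i$ with $y_i\in M[\m^i]$, and similarly $x_j=\pi^{(j-1)/2}y_j$ with $y_j\in M[\m^j]$. The identity above yields
\[
\langle x_i,x_j\rangle=\pi^{(i+j)/2-1}\langle y_i,y_j\rangle.
\]
Since $j\geq i+2$ we have $(i+j)/2-1\geq i$, and since $\pi^{i}\langle y_i,y_j\rangle=0$ this vanishes already in $N$. A fortiori its image in $N[\m]$ is zero, so $\rho_i(M)\perp\rho_j(M)$ under $\bi'_{\mathrm{odd}}$.

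\textbf{Even case.} For even $i$ one has $\rho_i(M)=\rho_{i-1}(M/M[\m])$, and a lift to $M$ of a generator has the form $x_i=\pi^{(i-2)/2}y_i$ with $\pi^{i-1}y_i\in M[\m]$, i.e.\ $\pi^{i}y_i=0$; similarly for $j$. The same identity now gives
\[
\langle x_i,x_j\rangle=\pi^{(i+j)/2-2}\langle y_i,y_j\rangle.
\]
Multiplying by $\pi$ and using $j\geq i+2$, hence $(i+j)/2-1\geq i$, together with $\pi^{i}\langle y_i,y_j\rangle=0$, we get $\pi\langle x_i,x_j\rangle=0$, i.e.\ $\langle x_i,x_j\rangle\in N[\m]$. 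Reducing modulo $N[\m]$ (the target of $\bi'_{\pa}$ in Definition \ref{odd}) kills this element, so the image in $N[\m^2]/N[\m]$, where $\bi'_{\mathrm{even}}$ takes values, is zero.

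\textbf{Main obstacle.} The only real subtlety is bookkeeping: the odd form requires vanishing in $N$, while the even form only requires vanishing modulo $N[\m]$, and the exponents shift by one between the two cases via $\rho_i(M)=\rho_{i-1}(M/M[\m])$. Beyond that, the whole argument collapses to the one inequality $\lfloor i/2\rfloor+\lfloor j/2\rfloor\geq \min(i,j)$ for $i\neq j$ of the same parity, which drives both cases uniformly.
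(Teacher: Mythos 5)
Your proposal is correct, and it is exactly the computation the paper has in mind: the paper's proof of this lemma reads only ``This is an easy calculation, which is left to the reader,'' and your power-of-$\pi$ bookkeeping (vanishing in $N$ for the odd form, vanishing modulo $N[\m]$ for the even form, both driven by $\lfloor i/2\rfloor+\lfloor j/2\rfloor\geq\min(i,j)$ for distinct $i,j$ of equal parity) supplies that calculation in full.
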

\begin{proof}
This is an easy calculation, which is left to the reader.
\end{proof}

\begin{lemma} \label{homc}
Let $M=(R/\m^r)^s$ ($1 \leq r \leq n$, $s \in \Z_{\geq 0}$). Say that $e_1,\ldots,e_s$ form a basis of $M$ over $R/\m^r$. Let $x_{ij} \in R$ with
$\langle e_i,e_j \rangle=\pi^{n-r}x_{ij}$. Then $\langle\,\, ,\,\rangle$ is non-degenerate iff $\det{\left((x_{ij}+\m)_{i,j=1}^s \right)} \neq
0$ in $R/\m$. 
\end{lemma}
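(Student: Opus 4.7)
The plan is to translate non-degeneracy into invertibility of a square matrix over the local ring $R/\m^r$, and then invoke the standard fact that a matrix over a local ring is invertible iff its determinant is a unit.

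First I would describe $\Hom_R(M,N)$ explicitly. Since $N \cong R$ and $M = (R/\m^r)^s$ is free of rank $s$ over $R/\m^r$, the module $\Hom_R(M,N)$ is also free of rank $s$ over $R/\m^r$: an $R$-linear map $R/\m^r \to R$ is determined by the image of $1$, which must be annihilated by $\m^r$ and hence lies in $R[\m^r] = \m^{n-r} = \pi^{n-r}R$, and multiplication by $\pi^{n-r}$ gives an isomorphism $R/\m^r \cong \m^{n-r}$. Fixing an isomorphism $N \cong R$, I obtain an $R/\m^r$-basis $f_1,\ldots,f_s$ of $\Hom_R(M,N)$ characterized by $f_i(e_j) = \delta_{ij}\pi^{n-r}$.

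Next I would compute the defining map $\varphi \colon M \to \Hom_R(M,N)$, $m \mapsto \langle m,-\rangle$, in the bases $(e_i)$ and $(f_j)$. Evaluating at $e_j$ gives $\varphi(e_i)(e_j) = \langle e_i,e_j\rangle = \pi^{n-r} x_{ij}$, so $\varphi(e_i) = \sum_j x_{ij} f_j$, and $\varphi$ is represented by the matrix $X = (x_{ij})$ with entries in $R/\m^r$ (well defined modulo $\m^r$ because $\m^r \cdot \pi^{n-r} = 0$). By definition, $\bi$ is non-degenerate iff $\varphi$ is an isomorphism, which is equivalent to $X$ being invertible as an $s \times s$ matrix over $R/\m^r$. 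Since $R/\m^r$ is local with maximal ideal $\m/\m^r$, this is equivalent to $\det(X)$ being a unit, i.e.\ $\det(X) \not\equiv 0 \pmod{\m}$, which is exactly the claim.

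The only real delicacy is the identification $\Hom_R(R/\m^r, R) \cong R/\m^r$, where one has to keep track of the factor $\pi^{n-r}$ so that the matrix of $\varphi$ reads off as $(x_{ij})$ rather than $(\pi^{n-r} x_{ij})$; once that bookkeeping is done, the rest reduces to elementary linear algebra over a local ring.
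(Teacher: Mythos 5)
Your argument is correct and is exactly the verification the paper leaves implicit when it says the lemma ``follows directly from the definition of non-degeneracy'': you identify $\Hom_R(M,N)$ with $(R/\m^r)^s$ via the dual basis $f_i(e_j)=\delta_{ij}\pi^{n-r}$, read off the matrix of $m\mapsto\langle m,-\rangle$ as $(x_{ij})$ over $R/\m^r$, and apply the criterion that a matrix over a local ring is invertible iff its determinant is a unit. The bookkeeping you flag (that $x_{ij}$ is only well defined modulo $\m^r$, and that the factor $\pi^{n-r}$ is absorbed into the dual basis) is the only delicate point, and you handle it properly.
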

\begin{proof}
This follows directly from the definition of non-degeneracy. 
\end{proof}

\begin{theorem} \label{rete} \label{moma}
The following statements hold.
\begin{enumerate}
\item
The form $\langle\,\, ,\,\rangle$ is non-degenerate iff $\langle\,\, ,\,\rangle_{\mathrm{odd}}$ and $\langle\,\, ,\,\rangle_{\mathrm{even}}$ are non-degenerate. 
\item
Assume that $\langle\,\, ,\,\rangle$ is non-degenerate. Then we can write $M=M_1 \perp \ldots \perp M_n$ where $M_i$ is non-degenerate and free over
$R/\m^i$. Fix $j$ ($1 \leq j \leq n$) and assume that $M_j' \subseteq M$ is free over $R/\m^j$ and that the map $\rho_j(M_j') \to \rho_j(M)$ is an
isomorphism of $R$-modules. Then there is a decomposition as above with $M_j=M_j'$.    
\end{enumerate}
\end{theorem}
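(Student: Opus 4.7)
My plan is to prove both parts simultaneously by induction on $\mathrm{length}_R(M)$, using Lemma \ref{orthogonal} as the main engine: given a non-degenerate free $R/\m^r$-summand $M' \subseteq M$ with $\rho_r(M') \to \rho_r(M)$ an isomorphism, Lemma \ref{orthogonal} splits $M = M' \perp (M')^\perp$ and the inductive hypothesis handles the strictly shorter $(M')^\perp$. The technical heart is the following sub-lemma, which I will establish first: if $M' \subseteq M$ is free over $R/\m^r$, the map $\rho_r(M') \to \rho_r(M)$ is an isomorphism, and the induced form on the $\rho_r$-summand of $M_{\mathrm{odd}}$ or $M_{\mathrm{even}}$ (depending on the parity of $r$, via Lemma \ref{merk}) is non-degenerate, then $\bi|_{M'}$ is non-degenerate.

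To prove the sub-lemma, fix a basis $e_1, \ldots, e_s$ of $M'$ over $R/\m^r$. Since $\m^r M' = 0$, one has $\langle e_i, e_k \rangle \in N[\m^r] = \pi^{n-r} N$, and I write $\langle e_i, e_k \rangle = \pi^{n-r} x_{ik}$. Tracing through Definitions \ref{goh} and \ref{odd} together with the $\pi^{\lfloor r/2 \rfloor}$-twist in $\rho_r$, the Gram matrix of the induced form on $\rho_r(M') \cong (R/\m)^s$ comes out to $(x_{ik} \bmod \m)$. Non-degeneracy of this form, transported across the assumed isomorphism from the $\rho_r$-summand of $M_{\mathrm{odd}}$ or $M_{\mathrm{even}}$, forces this matrix to have unit determinant, and Lemma \ref{homc} then delivers non-degeneracy of $\bi|_{M'}$.

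For part (ii), I assume $\bi$ is non-degenerate; then Lemma \ref{hoa} and Definition \ref{odd} give that $\bi_{\mathrm{odd}}$ and $\bi_{\mathrm{even}}$ are non-degenerate, and Lemma \ref{merk} implies each $\rho_i(M)$ is non-degenerate. Given $M_j'$ as in the hypothesis (with $M_j' = 0$ if $\rho_j(M) = 0$), the sub-lemma yields non-degeneracy of $\bi|_{M_j'}$, so Lemma \ref{orthogonal} gives $M = M_j' \perp (M_j')^\perp$ with $\bi|_{(M_j')^\perp}$ non-degenerate. Additivity of $\rho_j$ on the direct sum, together with $\rho_j(M_j') \to \rho_j(M)$ being an iso, yields $\rho_j((M_j')^\perp) = 0$. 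The inductive hypothesis applied without constraint to the shorter $(M_j')^\perp$ decomposes it as $\perp_{i \neq j} M_i$, completing the desired decomposition. The unconstrained existence assertion follows by choosing any $r_0$ with $\rho_{r_0}(M) \neq 0$, extracting a rank-$s_{r_0}$ summand $M_{r_0}' \subseteq M$ from Theorem \ref{prod}, and applying the above with $j = r_0$.

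For part (i), the forward direction is Lemma \ref{hoa} together with Definition \ref{odd}. For the converse, assume $\bi_{\mathrm{odd}}$ and $\bi_{\mathrm{even}}$ are non-degenerate; Lemma \ref{merk} then gives that each $\rho_i(M)$ is non-degenerate. I induct on $\mathrm{length}_R(M)$: pick $r$ largest with $\rho_r(M) \neq 0$, extract $M_r' \subseteq M$ free of rank $s_r$ over $R/\m^r$ from Theorem \ref{prod}, apply the sub-lemma to get $\bi|_{M_r'}$ non-degenerate, and split $M = M_r' \perp (M_r')^\perp$ via Lemma \ref{orthogonal}. By additivity of $\rho_i$ and orthogonality, the $\bi_{\mathrm{odd}}$- and $\bi_{\mathrm{even}}$-forms of $(M_r')^\perp$ are exactly those of $M$ with the $\rho_r$-summand removed, hence still non-degenerate; induction gives $\bi|_{(M_r')^\perp}$ non-degenerate, and Lemma \ref{orthogonal} lifts this back to $\bi$. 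I expect the main obstacle to be the sub-lemma itself: identifying the Gram matrix of $\bi|_{M'}$ with that of the induced form on $\rho_r(M')$ requires careful bookkeeping of the $\pi^{\lfloor r/2 \rfloor}$ scalings and of the identifications $N[\m] \cong R/\m$ and $N[\m^2]/N[\m] \cong R/\m$ built into Definitions \ref{goh} and \ref{odd}; once that link is clean, the rest is a formal induction.
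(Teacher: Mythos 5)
Your proposal is correct and follows essentially the same route as the paper: the homogeneous case is settled by the Gram-matrix computation with $\langle e_i,e_k\rangle=\pi^{n-r}x_{ik}$ and the determinant criterion of Lemma \ref{homc}, and the general case proceeds by induction on $\mathrm{length}_R(M)$, splitting off a homogeneous summand with $\rho_j(M_j')\to\rho_j(M)$ an isomorphism via Lemma \ref{orthogonal} and tracking the odd/even forms through Lemmas \ref{soe} and \ref{merk}. Your explicit isolation of the sub-lemma is just a cleaner packaging of what the paper does in one line (``from the homogeneous case and Lemma \ref{merk} it follows that $M_i$ is non-degenerate'').
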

\begin{proof}
We will prove the first statement, and along the way we will prove the second one as well. 

\noindent$\implies$\hspace{-0.1cm}: From Lemma \ref{hoa} it follows that $\langle\,\, ,\,\rangle_{\mathrm{odd}}$ is non-degenerate. For the
even case, we notice that the form $M/M[\m] \times M/M[\m] \to N/N[\m]$ is still non-degenerate, and one can then apply Lemma
\ref{hoa}.

\noindent$\limplies$: First assume that $M$ is free over $R/\m^r$ ($1 \leq r \leq n$), say $M=(R/\m^r)^s$ with basis $e_1,\ldots, e_s$ over $R/\m^r$.
Assume
that $r$ is odd. Then we write
\begin{eqnarray*}
\langle e_i,e_j \rangle &=& \pi^{n-r} x_{ij}
\end{eqnarray*}
where $x_{ij} \in R$. Notice that the elements $\pi^{\frac{r-1}{2}}e_i$ give a basis of $\mathrm{ur}(M)/\mathrm{lr}(M)$. We obtain
\begin{eqnarray*}
\pi^{r-1} \langle e_i,e_j \rangle &=& \langle \pi^{\frac{r-1}{2}}e_i,\pi^{\frac{r-1}{2}}e_j \rangle \\
&=& \pi^{n-1} x_{ij}.	 
\end{eqnarray*}
By Lemma \ref{homc} we see $\langle\,\, ,\,\rangle$ is non-degenerate iff $\langle\,\, ,\,\rangle_{\mathrm{odd}}$ is non-degenerate.

Similarly, one shows that if $r$ is even, $\langle\,\, ,\,\rangle$ is non-degenerate iff $\langle\,\, ,\,\rangle_{\mathrm{even}}$ is non-degenerate. 
 
Now we will do the general case. We will give a proof by induction on $\mathrm{length}_R(M)$, the statement being trivial when $M=0$. Pick a nonzero
$M_i \subseteq M$ free over $R/\m^i$ ($1 \leq i \leq n$) such that the map $\rho_i(M_i') \to \rho_i(M)$ is an isomorphism (which we can do by Theorem
\ref{prod} and Lemma \ref{soe}). Then from the homogeneous case and Lemma \ref{merk} it follows that $M_i$ is non-degenerate. Now write $M=M_i \perp
M'$ (Lemma
\ref{orthogonal}) and by induction $M'$ is non-degenerate. In the last step we used that the odd and even forms stay non-degenerate by Lemma
\ref{soe}. This finishes the proof of the second implication and of ii.  
\end{proof}

\subsection{Shaving}
From now on let $r$ be minimal such that $\mathrm{Ann}_R(M)=\m^r$. In many proofs we want to do induction on $r$ and for this purpose we use a
technique called shaving. 

\begin{definition}
Suppose that $r \geq 2$. Then we define \index{$\mathrm{Sh}(M)$}  \index{shaving}
\begin{eqnarray*}
\mathrm{Sh}(M)=M[\m^{r-1}]/\m^{r-1}M.
\end{eqnarray*}
\end{definition}

Notice that $\mathrm{Sh}(M)$ obtains a natural symmetric $R$-bilinear form from $\bi$ as $\langle M[\m^{r-1}], \m^{r-1}M \rangle=0$. So assume that
$\mathrm{Sh}(M)$ is equipped with this natural form. 

\begin{lemma} \label{trivi}
Suppose that $r \geq 2$. Let $\varphi: M[\m^{r-1}] \to \mathrm{Sh}(M)$ be the canonical map. Then:
\begin{eqnarray*}
\mathrm{lr}(M) &=& \varphi^{-1}\left(\mathrm{lr}(\mathrm{Sh}(M))\right) \\
\mathrm{ur}(M) &=& \varphi^{-1}\left(\mathrm{ur}(\mathrm{Sh}(M))\right).
\end{eqnarray*}
\end{lemma}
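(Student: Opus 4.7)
The plan is to reduce to the cyclic case via the structure theorem (Theorem \ref{prod}) and to verify both identities piece by piece by direct calculations with ideals of $R$, falling back on Lemma \ref{loup}.

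First I would check that both $\mathrm{lr}(M)$ and $\mathrm{ur}(M)$ lie inside $M[\m^{r-1}]$, so that the preimages under $\varphi$ make sense. The hypothesis $r \geq 2$ gives $2(r-1) \geq r$, hence $\m^{r-1}M \subseteq M[\m^{r-1}]$ and $\mathrm{Sh}(M)$ is well-defined. For the lower root, each summand $\m^i M \cap M[\m^i]$ with $i \geq r$ vanishes, and for $i \leq r-1$ we have $M[\m^i] \subseteq M[\m^{r-1}]$. For the upper root, the intersection defining it includes the $i = r-1$ term, namely $\m^{r-1}M + M[\m^{r-1}] = M[\m^{r-1}]$, so $\mathrm{ur}(M) \subseteq M[\m^{r-1}]$.

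I would then observe that both sides of each identity are additive in $M$: the operations $M \mapsto M[\m^{r-1}]$ and $M \mapsto \m^{r-1}M$ commute with finite direct sums (with $r$ fixed), so $\mathrm{Sh}$ does too, and $\mathrm{lr}$, $\mathrm{ur}$ were already noted to. By Theorem \ref{prod} it therefore suffices to verify the identities when $M = R/\m^s$ for $1 \leq s \leq r$, always using $r$ as the ambient exponent in the shaving. For $s \leq r-1$ the piece is already killed by $\m^{r-1}$ and its $\m^{r-1}$-multiple is zero, so $\mathrm{Sh}(R/\m^s) = R/\m^s$ with $\varphi$ the identity, and both identities are trivial. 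For $s = r$, one computes that $\mathrm{Sh}(R/\m^r) = \m(R/\m^r)/\m^{r-1}(R/\m^r) \cong R/\m^{r-2}$ via the shift $\pi^j + \m^r \leftrightarrow \pi^{j-1} + \m^{r-2}$, and one applies Lemma \ref{loup} on both sides to match up the resulting ideals.

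The only subtle step is the bookkeeping in this last case. The parity identities $\lceil r/2 \rceil = \lceil (r-2)/2 \rceil + 1$ and $\lfloor r/2 \rfloor = \lfloor (r-2)/2 \rfloor + 1$ make the two sides of each claimed identity land on the same ideal of $R/\m^r$, and one also needs $\m^{r-1}(R/\m^r) \subseteq \mathrm{lr}(R/\m^r) \cap \mathrm{ur}(R/\m^r)$ so that reducing modulo $\m^{r-1}M$ does not discard any part of either root; this reduces to $r - 1 \geq \lceil r/2 \rceil$, valid for $r \geq 2$. Everything else is routine.
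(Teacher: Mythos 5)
Your proof is correct, and it supplies precisely the calculation the paper omits (the paper's proof of Lemma \ref{trivi} is just ``an easy calculation left to the reader''): the reduction to cyclic summands via Theorem \ref{prod}, the observation that summands of exponent below $r$ are untouched by shaving, and the length bookkeeping via Lemma \ref{loup} in the top case all check out. This matches the style of argument the paper uses for the neighbouring Lemma \ref{threes}, so it is essentially the intended proof.
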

\begin{proof}
This is an easy calculation and left to the reader. 
\end{proof}

\begin{lemma} \label{threes}
Suppose that $r \geq 2$. Then we have natural isomorphisms for $i \geq 1$ that respect the inner products given by Definition \ref{goh}
\begin{eqnarray*}
\rho_i(\mathrm{Sh}(M)) &\cong& \left\{
\begin{array}{cc}
0 & i \geq r \\
\rho_{r-2}(M) \perp \rho_r(M)& i=r-2, r \geq 3, \\
\rho_{i}(M) & i=r-1 \mathrm{\ or\ } i<r-2
\end{array} \right.
\end{eqnarray*}
\end{lemma}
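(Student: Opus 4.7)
The plan is to reduce to the homogeneous case using Theorem \ref{prod}, use Lemma \ref{soe} for the module isomorphisms, and then check form-compatibility by computing with lifts into $M$.

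For the module statement: write $M\cong\bigoplus_{j=1}^{r}(R/\m^j)^{s_j}$ as $R$-modules. Shaving distributes over direct sums, and for a summand $M'$ with $\m^{r-1}M'=0$ one has $\mathrm{Sh}(M')=M'$, while $\mathrm{Sh}((R/\m^r)^{s_r})\cong(R/\m^{r-2})^{s_r}$ via $\pi y\leftrightarrow y$. Since $\rho_i$ is additive, applying Lemma \ref{soe} summand by summand yields the module isomorphisms: for $i\geq r$ no summand of $\mathrm{Sh}(M)$ contributes, for $i=r-1$ only the unchanged $(R/\m^{r-1})^{s_{r-1}}$ piece does, for $i<r-2$ only $(R/\m^i)^{s_i}$, and for $i=r-2$ (with $r\geq 3$) both $(R/\m^{r-2})^{s_{r-2}}$ and the shaved $(R/\m^{r-2})^{s_r}$ contribute, giving $\rho_{r-2}(M)\oplus\rho_r(M)$.

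The form on $\mathrm{Sh}(M)$ is the well-defined restriction $\bi_{\mathrm{Sh}}([x],[y])=\bi(x,y)$ for $x,y\in M[\m^{r-1}]$ (since $\bi(M[\m^{r-1}],\m^{r-1}M)=0$ by symmetry), taking values in $N[\m^{r-1}]$. Applying Definition \ref{goh} gives forms with codomain $N[\m^{r-1}][\m]=N[\m]$ in the odd case and $N[\m^{r-1}][\m^2]/N[\m^{r-1}][\m]=N[\m^2]/N[\m]$ in the even case (using $r\geq 3$), matching the codomains of the forms on $\rho_i(M)$. For $i=r-1$ or $i<r-2$, each class in $\rho_i(\mathrm{Sh}(M))$ is represented by an element of $M$ that also represents the corresponding class in $\rho_i(M)$, and both forms are $\bi$ applied to those representatives, so they agree tautologically.

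The delicate case is $i=r-2$ with $r\geq 3$. Natural embeddings into $\rho_{r-2}(\mathrm{Sh}(M))$ come from $\rho_{r-2}(M)$ via $y=\pi^{\lfloor(r-2)/2\rfloor}y'$ with $y'\in M[\m^{r-2}]$ (already in $M[\m^{r-1}]$), and from $\rho_r(M)$ via $x=\pi^{\lfloor r/2\rfloor}x'=\pi^{\lfloor(r-2)/2\rfloor}(\pi x')$ with $\pi x'$ giving a class in $\mathrm{Sh}(M)[\m^{r-2}]$ (using $\lfloor r/2\rfloor=\lfloor(r-2)/2\rfloor+1$). The orthogonality calculation is
\[
\bi(x,y)=\pi^{\lfloor r/2\rfloor+\lfloor(r-2)/2\rfloor}\bi(x',y')=\bi(x',\pi^{\lfloor r/2\rfloor+\lfloor(r-2)/2\rfloor}y'),
\]
and since $\lfloor r/2\rfloor+\lfloor(r-2)/2\rfloor$ equals $r-2$ for odd $r$ and $r-1$ for even $r$, in both cases $\pi^{r-2}y'=0$ makes this vanish. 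On each summand the form is the restriction of $\bi$, matching the form on $\rho_{r-2}(M)$ resp.\ $\rho_r(M)$. The main obstacle is this orthogonality computation, which hinges on carefully tracking the exponents of $\pi$ and the index shift induced by the identification $\mathrm{Sh}((R/\m^r)^{s_r})\cong(R/\m^{r-2})^{s_r}$.
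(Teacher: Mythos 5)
Your proof is correct and follows essentially the same route as the paper, which simply says to choose a decomposition of $M$ into homogeneous modules and apply Lemma \ref{soe}. You additionally carry out the form-compatibility and orthogonality check for the $i=r-2$ case that the paper leaves implicit, and that computation (reducing to $\pi^{\lfloor r/2\rfloor+\lfloor(r-2)/2\rfloor}y'=0$ for $y'\in M[\m^{r-2}]$) is sound.
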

\begin{proof}
For the proof, choose a decomposition of $M$ into homogeneous modules, that is, modules which are free over some $R/\m^i$, and use Lemma \ref{soe}. 
\end{proof} 

\begin{remark}
 Lemma \ref{threes} shows that if $r \geq 3$ we don't lose any information about the $\rho_i$ and their forms if we pass from $M$ to $\mathrm{Sh}(M)$.
In this case we see that $M$ is
anisotropic if and only if $\mathrm{Sh}(M)$ is anisotropic. Furthermore, by Theorem \ref{moma}, we see that if $r \geq 3$, a non-degenerate form
induces a non-degenerate form on $\mathrm{Sh}(M)$. 
\end{remark}

\begin{lemma} \label{sat}
Assume that $r \geq 2$ and that $\bi$ is non-degenerate. Let $\varphi: M[\m^{r-1}] \to M[\m^{r-1}]/\m^{r-1}M$ be the natural map. Let
\begin{eqnarray*}
\mathfrak{S}_1 &=& \{ L \subseteq M: \m L^{\perp} \subseteq L \subseteq L^{\perp}, L \subseteq M[\m^{r-1}] \} \\
\mathfrak{S}_2 &=& \{ L' \subseteq \mathrm{Sh}(M): \m L'^{\perp} \subseteq L' \subseteq L'^{\perp} \}.
\end{eqnarray*}
Then we have the following surjection
\begin{eqnarray*}
\psi: \mathfrak{S}_1 &\to& \mathfrak{S}_2 \\
L &\mapsto& \varphi(L).
\end{eqnarray*}
If we restrict the domain to the set of all $L \subset M$ that also satisfy $\m^{r-1}M \subseteq L$, then the map is a bijection.
\end{lemma}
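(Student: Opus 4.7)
The plan is first to establish the bijection on the restricted domain $\{L \in \mathfrak{S}_1 : \m^{r-1}M \subseteq L\} \to \mathfrak{S}_2$, and then to deduce both well-definedness and surjectivity of the unrestricted $\psi$ as formal consequences. Restricted injectivity is automatic: if $L_1, L_2 \supseteq \m^{r-1}M$ and $\varphi(L_1) = \varphi(L_2)$, then $L_1 = L_1 + \m^{r-1}M = L_2 + \m^{r-1}M = L_2$.

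The key identity driving the proof is $\langle \m^{r-1}M, M[\m^{r-1}]\rangle = 0$, which follows from $\langle \pi^{r-1}m, y\rangle = \langle m, \pi^{r-1}y\rangle = 0$ for $y \in M[\m^{r-1}]$. This has two consequences: first, the induced form on $\mathrm{Sh}(M)$ takes well-defined values in $N[\m^{r-1}]$ without further quotienting, and second, $\langle \bar x, \bar y\rangle_{\mathrm{Sh}} = 0$ is equivalent to $\langle x, y\rangle = 0$ in $M$ for any lifts $x, y \in M[\m^{r-1}]$.

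For the restricted bijection, I would fix $L$ with $\m^{r-1}M \subseteq L \subseteq M[\m^{r-1}]$ and observe that $L^\perp$ also lies in the range $[\m^{r-1}M,\,M[\m^{r-1}]]$: the inclusion $L^\perp \subseteq M[\m^{r-1}]$ uses $L \supseteq \m^{r-1}M$ together with $(\m^{r-1}M)^\perp = M[\m^{r-1}]$ (as observed in the proof of Corollary \ref{ji}), and dually $L^\perp \supseteq \m^{r-1}M$. I would then prove $\varphi(L^\perp) = \varphi(L)^\perp$ in $\mathrm{Sh}(M)$: the containment $\subseteq$ is immediate, and for the reverse a lift $y \in M[\m^{r-1}]$ of $\bar y \in \varphi(L)^\perp$ satisfies $\langle y, L\rangle = 0$ exactly (by the key identity above), hence $y \in L^\perp$. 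With this, the inclusion-preserving bijection between submodules of $M$ in the above range and submodules of $\mathrm{Sh}(M)$ transports the $\mathfrak{S}_1$-conditions to the $\mathfrak{S}_2$-conditions: $L \subseteq L^\perp \Leftrightarrow \varphi(L) \subseteq \varphi(L)^\perp$, and $\m L^\perp \subseteq L \Leftrightarrow \m\varphi(L)^\perp \subseteq \varphi(L)$.

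For the unrestricted statement, any $L \in \mathfrak{S}_1$ enlarges to $\tilde L := L + \m^{r-1}M$, which remains in $\mathfrak{S}_1$: using $\tilde L^\perp = L^\perp \cap M[\m^{r-1}]$ by Corollary \ref{per}, we get $\m\tilde L^\perp \subseteq \m L^\perp \subseteq L \subseteq \tilde L$, and $\tilde L \subseteq \tilde L^\perp$ follows from $L \subseteq L^\perp$ together with $\m^{r-1}M \subseteq L^\perp$. Since $\tilde L \supseteq \m^{r-1}M$ and $\varphi(\tilde L) = \varphi(L)$, the restricted bijection yields $\varphi(L) \in \mathfrak{S}_2$ (well-definedness of $\psi$), and surjectivity of $\psi$ follows from surjectivity of the restricted map. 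The main (and essentially only) obstacle is recognizing that the shaved form has target $N[\m^{r-1}]$ rather than a quotient thereof; once this is noticed, the rest of the proof is formal manipulation of perps using results established earlier in the paper.
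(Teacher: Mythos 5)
Your proposal is correct and follows essentially the same route as the paper's proof: well-definedness via the enlargement $L \mapsto L+\m^{r-1}M$ (which needs no non-degeneracy), and the restricted bijection via the correspondence theorem together with the fact that non-degeneracy forces $L^{\perp}\subseteq M[\m^{r-1}]$ whenever $\m^{r-1}M\subseteq L$. You have merely written out the ``easy calculations'' (the identity $\varphi(L^{\perp})=\varphi(L)^{\perp}$ and the transport of the two conditions) that the paper leaves to the reader.
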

\begin{proof}
To see that the map is well defined, it suffices to show that if $L \in \mathfrak{S}_1$, then $L'= L+\m^{r-1} M$ satisfies $\m L'^{\perp} \subseteq L'
\subseteq L'^{\perp}$. This is an easy calculation, which doesn't require the non-degeneracy. 
Now suppose that $L' \subseteq \mathrm{Sh}(M)$ satisfies $\m L'^{\perp} \subseteq L' \subseteq L'^{\perp}$. Then $L=\varphi^{-1}(L')$ satisfies $\m L^{\perp} \subseteq L \subseteq L^{\perp}$ (here we use that $L^{\perp} \subseteq M[\m^{r-1}]$ by non-degeneracy). As $\m^{r-1}M \subseteq L$ and $\varphi(L)=L'$  we find a bijection if we restrict our domain. 
\end{proof}

\section{Preparation for the proof of the equivalence}

In this section we will prove many small lemmas that are used in the proof of Theorem \ref{main1}. Assume that we are in the same situation as in
the previous section. We begin with the definition of the radical root.

\begin{definition} \label{rad}
We define the \emph{radical root} of $(M,\bi)$ as 
\begin{eqnarray*}
\mathrm{rr}(M,\bi)=\mathrm{rr}(M)=\bigcap_{L \subseteq M: L \subseteq L^{\perp},\ L^{\perp}/L \mathrm{\ semisimple}} L.
\end{eqnarray*}
\end{definition}

We will come back to the radical root in the last section of this article.

\begin{lemma} \label{sir3}
Assume that $\bi$ is non-degenerate and that $r \geq 2$. Suppose that $\rho_r(M)$ is anisotropic. Then $\m^{r-1}M \subseteq \mathrm{rr}(M)$. 
\end{lemma}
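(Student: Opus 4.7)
The strategy is a contrapositive: fix any $L\subseteq M$ with $L\subseteq L^{\perp}$ and $L^{\perp}/L$ semisimple, and show that if $\m^{r-1}M\not\subseteq L$, then $\rho_r(M)$ carries a nonzero isotropic vector, contradicting the hypothesis. Since $\mathrm{rr}(M)$ is by definition the intersection of all such $L$, this gives the conclusion.

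The first move is to dualize. Non-degeneracy and Theorem \ref{ar1}~iv give $L=(L^{\perp})^{\perp}$, and the identity $(\m^{r-1}M)^{\perp}=M[\m^{r-1}]$ (used in the proof of Corollary \ref{ji}) shows that $\m^{r-1}M\subseteq L$ is equivalent to $\m^{r-1}L^{\perp}=0$. So I assume there exists $x\in L^{\perp}$ with $\pi^{r-1}x\neq 0$, i.e.\ of $\pi$-length exactly $r$. The only arithmetic input I need is $\langle x,x\rangle\in N[\m]$: semisimplicity of $L^{\perp}/L$ yields $\m L^{\perp}\subseteq L$, hence $\pi x\in L\subseteq L^{\perp}$, and so $\pi\langle x,x\rangle=\langle\pi x,x\rangle=0$.

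When $r$ is odd, I take $y=\pi^{(r-1)/2}x$. Its class in
\[
\rho_r(M)=\pi^{(r-1)/2}M\big/\bigl(\pi^{(r-1)/2}M[\m^{r-1}]+\pi^{(r+1)/2}M\bigr)
\]
is nonzero: any representation $\pi^{(r-1)/2}x=\pi^{(r-1)/2}u+\pi^{(r+1)/2}v$ with $u\in M[\m^{r-1}]$ would force $\pi^{r-1}x=0$ after multiplying by $\pi^{(r-1)/2}$, a contradiction. Its self-pairing, which by Definition \ref{goh} and Lemma \ref{merk} is the value of the component form on $\rho_r(M)$, equals $\pi^{r-1}\langle x,x\rangle$, and this vanishes since $\pi\langle x,x\rangle=0$ and $r\geq 2$. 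When $r$ is even, I use the definition $\rho_r(M)=\rho_{r-1}(M/M[\m])$ and pass to $\bar M=M/M[\m]$ with its induced non-degenerate form to $N/N[\m]$; the image $\bar x$ has odd length $r-1$, and the preceding argument applied to $\pi^{(r-2)/2}\bar x$ yields the required nonzero isotropic class, the self-pairing vanishing because $\pi^{r-2}\langle x,x\rangle$ lies in $N[\m]$.

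The main obstacle is more bookkeeping than depth: one must confirm that a self-pairing computed at the level of $M$ really realizes the component form on $\rho_r(M)$ with the correct codomain, namely $N[\m]$ for odd $r$ and $N[\m^2]/N[\m]$ for even $r$. These identifications follow from Definition \ref{goh} together with the orthogonality of the $\rho_i$-decomposition asserted in Lemma \ref{merk}, so the verification is routine.
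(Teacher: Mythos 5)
Your proof is correct and follows essentially the same route as the paper's: assume $\m^{r-1}M\not\subseteq L$ for some $L$ with $\m L^{\perp}\subseteq L\subseteq L^{\perp}$, pick $x\in L^{\perp}\setminus M[\m^{r-1}]$, and exhibit $\pi^{\lfloor (r-1)/2\rfloor}x$ (or $\pi^{\lfloor r/2\rfloor}x$ in the odd case) as a nonzero isotropic class in $\rho_r(M)$ using $\pi\langle x,x\rangle=\langle \pi x,x\rangle=0$. You merely spell out more explicitly the dualization step and the identification of the component form, both of which the paper leaves implicit.
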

\begin{proof}

Take $L \subseteq M$ with $\m L^{\perp} \subseteq L \subseteq L^{\perp}$. Suppose $\m^{r-1}M \not \subseteq L$. Then $L^{\perp} \not \subseteq M[\m^{r-1}]$ (by Theorem \ref{ar1}). Let $x \in L^{\perp} \setminus M[\m^{r-1}]$. 

First suppose that $r$ is odd. Then take $y=\pi^{\lfloor \frac{r}{2} \rfloor}x$. Then $0 \neq [y] \in \rho_r(M)$, which follows as $x \not \in
M[\m^{r-1}]$. As $\langle x,\pi x \rangle=0$ (since $\pi x \in L$), it follows that $\langle y,y \rangle=0$, as $r \geq 2$. This shows that
$\rho_r(M)$ is isotropic, a contradiction.

Now suppose that $r$ is even and consider $y=\pi^{\lfloor \frac{r-1}{2} \rfloor}x$. We see that $\pi \langle y,y \rangle=0$, but $[y] \neq 0$ in
$\rho_r(M)$ as $x \not \in M[\m^{r-1}]$. Hence $\rho_r(M)$ is isotropic, a contradiction. 
\end{proof}

\begin{lemma} \label{killit}
Let $L \subseteq M$ satisfy $L \subseteq L^{\perp}$. Then any maximal submodule $L' \supseteq L$ with $L' \subseteq L'^{\perp}$ satisfies $\m L'^{\perp} \subseteq L'$.  
\end{lemma}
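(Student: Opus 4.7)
The plan is to argue by contradiction. Suppose $\m L'^\perp \not\subseteq L'$, so there exists $x \in L'^\perp$ with $\pi x \notin L'$. I want to manufacture from $x$ an element $y \in L'^\perp \setminus L'$ with $\langle y, y \rangle = 0$; then the submodule $L'' := L' + Ry$ will satisfy $L' \subsetneq L''$ and $L'' \subseteq L''^\perp$, contradicting the maximality of $L'$.

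The key technical point is to pick $y$ so that $\langle y, y \rangle = 0$. The naive choice $y = \pi x$ only gives $\langle y, y \rangle = \pi^2 \langle x, x \rangle$, which need not vanish, so more care is needed. Let $k \geq 2$ be minimal with $\pi^k x \in L'$ (such $k$ exists because $\pi^n = 0$ annihilates $x$, and $k \geq 2$ because $\pi x \notin L'$ by assumption), and set $y = \pi^{k-1} x$. By minimality of $k$ we have $y \notin L'$, and clearly $y \in L'^\perp$ since $L'^\perp$ is a submodule. For the crucial computation, $\pi^k x \in L' \subseteq L'^\perp$ pairs with $x \in L'^\perp$ to give
\begin{eqnarray*}
\pi^k \langle x, x \rangle = \langle \pi^k x, x \rangle = 0,
\end{eqnarray*}
so $\langle x, x \rangle \in N[\m^k]$. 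Since $k \geq 2$ we have $2k - 2 \geq k$, hence $\langle y, y \rangle = \pi^{2k-2}\langle x, x \rangle = 0$.

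It remains to verify $L'' \subseteq L''^\perp$ where $L'' = L' + Ry$. Writing out a general pairing $\langle \ell_1 + ry, \ell_2 + sy\rangle$ with $\ell_i \in L'$, $r,s \in R$, the four terms vanish respectively by $L' \subseteq L'^\perp$, by $y \in L'^\perp$, by symmetry, and by $\langle ry, sy\rangle = rs\langle y, y\rangle = 0$. Hence $L'' \subseteq L''^\perp$, and $L'' \supsetneq L'$ because $y \notin L'$. This contradicts the maximality of $L'$, completing the proof.

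The main obstacle I expect is precisely the choice of $y$: one needs a single element that is simultaneously outside $L'$, inside $L'^\perp$, and self-orthogonal, and the clever idea is to use the ``deepest'' multiple $\pi^{k-1} x$ that still escapes $L'$, since the membership $\pi^k x \in L'$ then delivers exactly the annihilation $\pi^k\langle x, x\rangle = 0$ via pairing back against $x \in L'^\perp$.
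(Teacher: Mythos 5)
Your proof is correct, and at the strategic level it is the same as the paper's: contradict the maximality of $L'$ by producing a strictly larger self\dash orthogonal submodule $L''\supseteq L'$. The difference is in how the enlargement is produced. The paper passes to the quotient $L'^{\perp}/L'$ with its induced form, observes that $\m(L'^{\perp}/L')\neq 0$ forces the lower root of $L'^{\perp}/L'$ to be nonzero, and lifts that entire lower root using Corollary \ref{ji}; you instead exhibit a single explicit element $y=\pi^{k-1}x$ (with $k\geq 2$ minimal such that $\pi^k x\in L'$) that lies in $L'^{\perp}\setminus L'$ and is self\dash orthogonal, and adjoin $Ry$. Your computation is sound: $\pi^k\langle x,x\rangle=\langle \pi^k x,x\rangle=0$ because $\pi^k x\in L'$ and $x\in L'^{\perp}$ (note the relevant point is that one factor is in $L'$ and the other in $L'^{\perp}$, not that both lie in $L'^{\perp}$), and $2k-2\geq k$ then kills $\langle y,y\rangle$. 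In effect your $y$ reduces modulo $L'$ to an element of $\pi(L'^{\perp}/L')\cap (L'^{\perp}/L')[\pi]\subseteq \mathrm{lr}(L'^{\perp}/L')$, so you are constructing by hand a piece of the lower root the paper lifts abstractly. What your version buys is self\dash containedness — no appeal to Corollary \ref{ji} or to the semi\dash simplicity characterization of vanishing lower root — at the cost of a slightly longer element chase; both proofs correctly use only $L'\subseteq L'^{\perp}$ and need no non\dash degeneracy hypothesis.
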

\begin{proof}
Let $L'$ be such a maximal submodule and consider the induced form $\langle\,\, ,\,\rangle': L'^{\perp}/L' \times L'^{\perp}/L' \to N$. If $\m
(L'^{\perp}/L') \neq 0$, then by Corollary \ref{ji} we can lift the non-trivial lower root of $L'^{\perp}/L'$ to obtain a module $L' \subsetneq L''$
with $L'' \subset
L''^{\perp}$, a contradiction. 
\end{proof}

We have the following lemma, of which the proof is very technical. 

\begin{lemma} \label{sir}
Assume that $\bi$ is non-degenerate. Suppose that $r=2$ and that there exists $x \in M \setminus M[\m]$ with $\pi \langle x, x \rangle=0$. Then there exists $L \subset M$ with $L \neq \mathrm{lr}(M)$ and $\m L^{\perp} \subseteq L \subseteq L^{\perp}$. 
\end{lemma}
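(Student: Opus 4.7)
The plan is to take $L_0 = R \pi x$ and set $L = \pi L_0^\perp$; this $L$ is a submodule of $\mathrm{lr}(M) = \pi M$ of length one less than $\mathrm{lr}(M)$, and it will satisfy the required conditions.

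I would verify the properties in the following order. Since $r = 2$ gives $\pi^2 x = 0$, one has $\langle \pi x, \pi x \rangle = \pi^2 \langle x, x \rangle = 0$, so $L_0 \subseteq L_0^\perp$. Using the elementary identity $(\pi U)^\perp = \{y : \pi y \in U^\perp\}$, valid for any submodule $U \subseteq M$, together with the double-perp involution of Theorem \ref{ar1}, one computes $L^\perp = \{y : \pi y \in L_0\} = R x + M[\m]$, and hence $\pi L^\perp = R \pi x = L_0$. The hypothesis $\pi \langle x, x \rangle = 0$ is precisely the statement $x \in L_0^\perp$, from which $\pi x \in \pi L_0^\perp = L$, so $L_0 \subseteq L$, and therefore $\m L^\perp = L_0 \subseteq L$. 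For $L \subseteq L^\perp$, note that $r = 2$ forces $L \subseteq \pi M = \mathrm{lr}(M)$, and Corollary \ref{ji} gives $\mathrm{lr}(M) \subseteq \mathrm{lr}(M)^\perp \subseteq L^\perp$.

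To see $L \neq \mathrm{lr}(M)$, I would count lengths. The observation $\langle m, \pi x \rangle = \langle \pi m, x \rangle = 0$ for $m \in M[\m]$ shows $M[\m] \subseteq L_0^\perp$, so the kernel of multiplication by $\pi$ on $L_0^\perp$ is exactly $M[\m]$. The short exact sequence $0 \to M[\m] \to L_0^\perp \to L \to 0$ together with Theorem \ref{ar1}(iii) yields $\mathrm{length}(L) = \mathrm{length}(M) - 1 - \mathrm{length}(M[\m])$, whereas $\mathrm{length}(\mathrm{lr}(M)) = \mathrm{length}(\pi M) = \mathrm{length}(M) - \mathrm{length}(M[\m])$; the two lengths differ by exactly $1$.

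The main obstacle would be identifying the correct shape of $L$. The hypothesis only furnishes isotropy at the ``even'' level (for $\bi_{\m,\mathrm{even}}$ on $M/M[\m]$) and not within $\mathrm{ur}(M)/\mathrm{lr}(M)$, so one cannot enlarge $\mathrm{lr}(M)$ into a larger totally isotropic submodule; the counterexample $L$ must be carved strictly out of $\mathrm{lr}(M)$. Once $L = \pi (R\pi x)^\perp$ is guessed, every verification reduces to the perp duality of Theorem \ref{ar1} and routine length arithmetic.
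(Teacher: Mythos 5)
Your proof is correct, but it takes a genuinely different route from the paper's. The paper uses the non-degeneracy of the induced even form to produce a second element $y$ with $\pi\langle x,y\rangle\neq 0$, shows that $H=Rx+Ry\cong (R/\m^2)^2$ is a non-degenerate orthogonal summand via the determinant criterion (Lemma \ref{homc}) and Lemma \ref{sir0}, verifies the claim explicitly for $M=H$ with $L'=R\pi x$, and then glues by setting $L=L'\oplus\pi H^{\perp}$. You instead take $L=\pi L_0^{\perp}$ with $L_0=R\pi x$ and run everything through the perp-duality of Theorem \ref{ar1}: the identity $(\pi U)^{\perp}=\{z:\pi z\in U^{\perp}\}$ plus double-perp gives $L^{\perp}=Rx+M[\m]$ and $\m L^{\perp}=L_0\subseteq L$ (the inclusion $L_0\subseteq L$ being exactly where the hypothesis $\pi\langle x,x\rangle=0$, i.e.\ $x\in L_0^{\perp}$, enters), while the length count $\mathrm{length}(L)=\mathrm{length}(\mathrm{lr}(M))-1$ forces $L\neq\mathrm{lr}(M)$. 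All steps check out (including the implicit facts $\mathrm{lr}(M)=\pi M$ and $\mathrm{length}(R\pi x)=1$, which follow from $r=2$ and $x\notin M[\m]$). Amusingly, your $L$ coincides with the paper's: $\pi\bigl((R\pi x)^{\perp}\bigr)=R\pi x\oplus\pi H^{\perp}$ once one has the splitting. What your argument buys is the elimination of the auxiliary element $y$, the orthogonal splitting, and even the use of non-degeneracy of $\bi_{\mathrm{even}}$ — only Theorem \ref{ar1} and Corollary \ref{ji} are needed; what the paper's version buys is the explicit structural statement $H\cong(R/\m^2)^2$, which is not needed for the lemma itself.
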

\begin{proof}
We assume that $N=R$. We obtain a non-degenerate symmetric bilinear form $\bi_{\mathrm{even}}: M/M[\m] \times M/M[\m] \to R/R[\m]$ and hence there is $y \in M \setminus M[\m]$ such that $\pi \langle x,y \rangle \neq 0$. Now consider $H=Rx+Ry$. We first claim that $H \cong R/\m^{2} \oplus  R/\m^{2}$. Notice that $\m^2H=0$ and consider the following matrix: 
\begin{eqnarray*}
\left( \begin{array}{cc}
\langle x,x \rangle & \langle x,y \rangle \\
\langle y,x \rangle & \langle y,y \rangle 
\end{array} \right) =
\left( \begin{array}{cc}
\pi^{n-2} \cdot \pi r_1 & \pi^{n-2} \cdot r_2 \\
\pi^{n-2} \cdot r_2 & \pi^{n-2} \cdot r_3 
\end{array} \right)
\end{eqnarray*}
where $r_1, r_3 \in R$, $r_2 \in R^*$. Apply the determinant criterion (Lemma \ref{homc}) to see that the matrix would give a non-degenerate symmetric
bilinear form on $(R/\m^2)^2$. By Lemma \ref{sir0} we see that $H \cong \left( R/\m^2 \right)^2$.

Assume for the moment that $M=H$. Let $L'=R\pi x$. Then $L' \subseteq L'^{\perp}$. A short calculation show that $L'^{\perp}=Rx+R \pi y$, and
hence that $\m L'^{\perp} \subseteq L'$. Furthermore, $L' \neq \mathrm{lr}(H)$, because $\pi y \in \mathrm{lr}(H)$ but $\pi y \not \in L'$. 

For the general case write $M=H \perp H^{\perp}$ (Lemma \ref{orthogonal}) and let $L=L'\oplus \pi H^{\perp}$. Then $L \neq \mathrm{lr}(M)$ and it is
easy to see that $L \subseteq L^{\perp}$. Notice that $L^{\perp} \subseteq L'^{\perp}$ and an easy calculation shows that $\m L'^{\perp} \subseteq
L$. Hence $\m L^{\perp} \subseteq L$.
\end{proof}

\begin{lemma} \label{sir5}
Assume that $\bi$ is non-degenerate and that the only submodule $L \subseteq M$ satisfying $\m L^{\perp} \subseteq L \subseteq L^{\perp}$ is
$\mathrm{lr}(M)$. Suppose $M' \subseteq M$ satisfies $M' \subseteq M'^{\perp}$. Consider natural form $M'^{\perp}/M' \times M'^{\perp}/M' \to N$.
The following statements hold.
\begin{enumerate}
 \item 
 The only submodule $L' \subseteq M'^{\perp}/M'$ satisfying $\m L'^{\perp} \subseteq L' \subseteq L'^{\perp}$ is \\$\mathrm{lr}(M'^{\perp}/M')$;
 \item
$M' \subseteq \mathrm{lr}(M)$;
 \item
$\mathrm{lr}(M)/M'=\mathrm{lr}(M'^{\perp}/M')$.
\end{enumerate} 
\end{lemma}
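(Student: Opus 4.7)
The plan is to prove (ii) first so that the statement of (iii) makes sense, then to set up a correspondence between ``almost-maximal isotropic'' submodules of $M$ containing $M'$ and those of $\bar M := M'^\perp/M'$, and finally to identify $\mathrm{lr}(\bar M)$ via the hypothesis.

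For (ii), I will take any submodule of $M$ containing $M'$ that is maximal among submodules contained in their own perp (such a maximum exists by finite length). By Lemma \ref{killit} it satisfies $\m L^\perp \subseteq L \subseteq L^\perp$, so by the hypothesis it equals $\mathrm{lr}(M)$; hence $M' \subseteq \mathrm{lr}(M)$.

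Next I must check that the induced form $\bar\bi$ on $\bar M = M'^\perp/M'$ is non-degenerate (so that Corollary \ref{ji} can be applied to it and so that the perp operation in $\bar M$ behaves as in Theorem \ref{ar1}). This is standard: if $\bar x$ is in the kernel of the map $\bar M \to \Hom_R(\bar M, N)$, then $x \in (M'^\perp)^\perp = M'$ by Theorem \ref{ar1}(iv), so the map is injective, and a length count using Theorem \ref{ar1}(iii) and Corollary \ref{hom} gives surjectivity.

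The heart of the proof is then the following bijection. Let $\varphi: M'^\perp \to \bar M$ be the quotient map. I claim
\begin{eqnarray*}
\{\, L \subseteq M : M' \subseteq L,\ \m L^\perp \subseteq L \subseteq L^\perp\,\}
&\longleftrightarrow&
\{\, L' \subseteq \bar M : \m L'^\perp \subseteq L' \subseteq L'^\perp\,\}
\end{eqnarray*}
via $L \mapsto L/M'$ with inverse $L' \mapsto \varphi^{-1}(L')$. The key calculation is that if $M' \subseteq L \subseteq M'^\perp$ then $L^\perp = \varphi^{-1}(L'^\perp)$ where $L' = L/M'$, which follows by unpacking the definition of the induced form together with $M' \subseteq L$ forcing $L^\perp \subseteq M'^\perp$. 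Once this is checked, all four inclusions transfer between the two sides.

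Finally I combine everything. By the hypothesis, the only $L \subseteq M$ satisfying the condition on the left is $\mathrm{lr}(M)$, and by (ii) it automatically contains $M'$. So the right-hand set has the unique element $\mathrm{lr}(M)/M'$. On the other hand, $\mathrm{lr}(\bar M)$ lies in the right-hand set: $\mathrm{lr}(\bar M) \subseteq \mathrm{lr}(\bar M)^\perp = \mathrm{ur}(\bar M)$ by Corollary \ref{ji} (applied to the non-degenerate form $\bar\bi$), and $\mathrm{ur}(\bar M)/\mathrm{lr}(\bar M)$ is semisimple, which is exactly $\m\,\mathrm{lr}(\bar M)^\perp \subseteq \mathrm{lr}(\bar M)$. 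Hence $\mathrm{lr}(\bar M) = \mathrm{lr}(M)/M'$, which yields (iii), and the uniqueness on the right gives (i).

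The main obstacle I anticipate is the bookkeeping needed for the bijection, specifically verifying $L^\perp = \varphi^{-1}(L'^\perp)$ cleanly from non-degeneracy; everything else is a direct application of Lemma \ref{killit}, Corollary \ref{ji}, and Theorem \ref{ar1}.
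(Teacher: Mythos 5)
Your proposal is correct and follows essentially the same route as the paper: both rest on the bijection $L \mapsto L/M'$ between submodules of $M$ containing $M'$ with $\m L^\perp \subseteq L \subseteq L^\perp$ and the corresponding submodules of $M'^\perp/M'$, combined with the hypothesis forcing the first set to be $\{\mathrm{lr}(M)\}$ and Corollary \ref{ji} (applied to the induced non-degenerate form) placing $\mathrm{lr}(M'^\perp/M')$ in the second. Your only deviations are cosmetic — you prove (ii) up front via Lemma \ref{killit} rather than extracting it from the counting argument, and you spell out the non-degeneracy of the induced form, which the paper leaves implicit.
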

\begin{proof}
There is a natural bijection between the set $\mathfrak{S}_1$ of submodules $M''/M' \subseteq M'^{\perp}/M'$ satisfying $\m \left( M''/M'
\right)^{\perp} \subseteq M''/M' \subseteq \left( M''/M' \right)^{\perp}$ and the set $\mathfrak{S}_2$ of submodules $L'' \subseteq M$ satisfying $M'
\subseteq L''$ and $\m L''^{\perp} \subseteq L'' \subseteq L''^{\perp}$.
Notice that $\# \mathfrak{S}_1=\# \mathfrak{S}_2 \geq 1$ (as $\mathrm{lr}(M'^{\perp}/M') \in \mathfrak{S}_2$) and the last set has size at most $1$ by
assumption. We conclude that both sets have size $1$ and contain only the lower root of $M$ respectively $M'^{\perp}/M'$. This also shows that $M'
\subseteq \mathrm{lr}(M)$ and it
gives $\mathrm{lr}(M)/M'=\mathrm{lr}(M'^{\perp}/M')$.  
\end{proof}

\begin{lemma} \label{sir4}
Assume that $\bi$ is non-degenerate. Let $x \in M$ with $\mathrm{Ann}_R(x)=\m^s$. Then for every $r' \in \m^{n-s}$ there is $y \in M$ with $\langle x,y \rangle=r'$. 
\end{lemma}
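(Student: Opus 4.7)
The plan is to reduce the problem, via non-degeneracy, to the surjectivity of a restriction map on $\Hom$ and then to identify $\Hom_R(R/\m^s, R)$ with $\m^{n-s}$. Without loss of generality assume $N = R$.

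First I would observe that the $R$-linear map $f_x\colon M \to R$, $y \mapsto \langle x, y\rangle$, is, via the isomorphism $M \xrightarrow{\sim} \Hom_R(M,R)$ coming from non-degeneracy, exactly the image of some element; more importantly, its image is an ideal of $R$, hence of the form $\m^k$. Since $\pi^s x = 0$, we have $\pi^s\langle x, y\rangle = \langle \pi^s x, y\rangle = 0$ for every $y$, so $\im f_x \subseteq R[\m^s] = \m^{n-s}$ (the last equality uses that $R$ is local artinian principal ideal with $\m^n = 0$).

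Next I would show the reverse inclusion $\m^{n-s} \subseteq \im f_x$. Consider the cyclic submodule $Rx \subseteq M$; since $\mathrm{Ann}_R(x) = \m^s$, we have $Rx \cong R/\m^s$. Fix $r' \in \m^{n-s}$ and define $\phi\colon Rx \to R$ by $\phi(ax) = ar'$; this is well-defined because $\m^s r' \subseteq \m^n = 0$, so $\phi$ kills $\mathrm{Ann}_R(x)\cdot x$. By Corollary \ref{inj} (injectivity of $R$ as an $R$-module), $\phi$ extends to a map $\tilde\phi\colon M \to R$. By non-degeneracy, there exists $y \in M$ with $\tilde\phi(\cdot) = \langle y, \cdot\rangle$, and then
\begin{equation*}
\langle x, y\rangle \;=\; \langle y, x\rangle \;=\; \tilde\phi(x) \;=\; \phi(x) \;=\; r',
\end{equation*}
which produces the desired $y$.

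The only thing that needs any care is the well-definedness of $\phi$, which amounts to checking that annihilating $r'$ is compatible with $\mathrm{Ann}_R(x) = \m^s$; this is immediate from $r' \in \m^{n-s}$ and $\m^n = 0$. Everything else is a direct combination of non-degeneracy and the injectivity of $R$, both of which are already established in the paper, so there is no genuine obstacle.
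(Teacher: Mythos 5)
Your proof is correct and follows essentially the same route as the paper: define the map $Rx \to R$, $ax \mapsto ar'$ (well-defined since $\m^s r' \subseteq \m^n = 0$), extend it to all of $M$ using Corollary \ref{inj}, and then use non-degeneracy to represent the extension as pairing with some $y$. The opening observation that $\im f_x \subseteq \m^{n-s}$ is a harmless extra remark not needed for the statement.
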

\begin{proof}
First consider the map 
\begin{eqnarray*}
\varphi: Rx &\to& \m^{n-s} \subseteq R \\
x &\mapsto& r'
\end{eqnarray*}
which is defined by assumption. As $R$ is an injective $R$-module (Corollary \ref{inj}), we can extend $\varphi$ to a map $\psi': M \to R$. Since $\langle\,\, ,\,\rangle$ is non-degenerate, we see that there is $y \in M$ with $\psi'(z)=\langle z,y \rangle$ for all $z \in M$. Hence we have $r'=\psi'(x)= \langle x,y \rangle$. 
\end{proof}

\begin{lemma} \label{hensel}
Assume that $\mathrm{char}(R/\m) \neq 2$ and that $N=R$. Let $x,y \in M$ be such that $R\langle x,x \rangle=\m^i$, $R \langle x,y \rangle=\m^j$ and
$R \langle y,y \rangle=\m^{k}$ where $i+k>2j$ and $i \geq j$. Then there exists $c \in \m^{i-j}$ such that the element $z=x+cy$ satisfies $
\langle z,z \rangle=0$ and $R \langle y,z \rangle=\m^j$. 
\end{lemma}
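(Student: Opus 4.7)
The plan is a Newton--Hensel quadratic lift; the hypothesis $\mathrm{char}(R/\m)\ne 2$ is exactly what makes the relevant derivative a unit. First I would dispose of the trivial case $\langle x,x\rangle = 0$: then $c = 0$ gives $z=x$, and both conclusions are immediate from the hypotheses. So assume $\m^{i}\ne 0$. Fix a uniformizer $\pi$ of $\m$ and write $\langle x,x\rangle = \pi^{i}u$, $\langle x,y\rangle = \pi^{j}v$, $\langle y,y\rangle = \pi^{k}w$; from $R\langle x,x\rangle = \m^{i}\ne\m^{i+1}$ one gets $u\in R^{\ast}$, and similarly $v\in R^{\ast}$ (noting that $\m^{j}\supseteq\m^{i}\ne 0$ since $i\ge j$), while $w\in R$ (take $w=0$ if $\m^{k}=0$). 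Parametrize the unknown as $c=\pi^{i-j}t$ with $t\in R$ to be determined; this automatically yields $c\in\m^{i-j}$ as required.

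Set $m := i+k-2j$, so that $m\ge 1$ by hypothesis, and expand:
\[
\langle z,z\rangle = \pi^{i}u + 2\pi^{i-j}t\,\pi^{j}v + \pi^{2(i-j)+k}t^{2}w = \pi^{i}\bigl(u + 2vt + \pi^{m}wt^{2}\bigr).
\]
Thus $\langle z,z\rangle=0$ is equivalent to $g(t):=u+2vt+\pi^{m}wt^{2}$ lying in $\m^{n-i}$ (with $n$ the length of $R$), an equation in the artinian local ring $R/\m^{n-i}$.

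The central step is Hensel's lemma applied to $g$. Since $\mathrm{char}(R/\m)\ne 2$ and $v\in R^{\ast}$, the derivative $g'(t) = 2v + 2\pi^{m}wt$ is a unit for every $t\in R$. Starting from $t_{0}:=-(2v)^{-1}u$ one gets $g(t_{0}) = \pi^{m}wt_{0}^{2}\in\m^{m}$, and Newton iteration $t_{k+1}:=t_{k}-g(t_{k})/g'(t_{k})$ at least doubles the $\m$\dash adic order of $g(t_{k})$ at each step (the quadratic remainder is $\pi^{m}w\bigl(g(t_{k})/g'(t_{k})\bigr)^{2}$). Because $R$ is artinian, after finitely many iterations $g(t)\in\m^{n-i}$, so $\pi^{i}g(t) = 0$, producing the required $c = \pi^{i-j}t$ with $\langle z,z\rangle=0$.

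Finally, for the remaining claim $R\langle y,z\rangle=\m^{j}$, compute
\[
\langle y,z\rangle = \pi^{j}v + \pi^{i-j+k}tw = \pi^{j}v + \pi^{j+m}tw;
\]
as $m\ge 1$ the perturbation lies in $\m^{j+1}$, while $\pi^{j}v$ generates $\m^{j}$ modulo $\m^{j+1}$, so $\langle y,z\rangle$ still generates $\m^{j}$. The only substantive step is the Hensel lift, and it is painless precisely because char $\ne 2$ keeps the linear coefficient $2v$ (hence $g'$) a unit throughout, so no subtlety about simple roots ever arises; the remainder is valuation bookkeeping together with a check of the degenerate boundary cases $\m^{i}=0$ or $\m^{k}=0$.
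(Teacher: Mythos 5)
Your proof is correct and follows essentially the same route as the paper's: the paper also takes the first approximation $c=-\pi^{i-j}r_1/(2r_2)$ (your $t_0=-(2v)^{-1}u$), observes that the new self-pairing lands in $\m^{2(i-j)+k}$ with $2(i-j)+k>i$ while $R\langle y,z'\rangle=\m^j$ is preserved, and iterates by induction on $n-i$ until the artinian hypothesis kills the remainder. Your only difference is cosmetic -- you package the iteration as Newton's method on the scalar quadratic $g(t)=u+2vt+\pi^m wt^2$ rather than as induction on the pair $(z',y)$ -- and your explicit handling of the boundary cases $\m^i=0$, $\m^k=0$ is a welcome extra check.
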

\begin{proof}
We give a proof by induction on $n-i$. If $n=i$, then we are directly done. Now continue with induction.
Let $\langle x, x \rangle=\pi^i r_1$ and let $\langle x, y \rangle=\pi^j r_2$ where $r_1, r_2 \in R^*$. Now let $c= \frac{- \pi^{i-j} r_1}{2 r_2}$ and
let $z'=x+cy$. We calculate:
\begin{eqnarray*}
\langle z',z' \rangle &=& \langle x+cy, x+cy \rangle \\
&=& \langle x, x \rangle +2c \langle x,y \rangle + c^2 \langle y, y \rangle \\
&=& \pi^i r_1+2 \frac{- \pi^{i-j} r_1}{2 r_2} \pi^j r_2 + c^2 \langle y, y \rangle \\
&=&  c^2 \langle y, y \rangle.
\end{eqnarray*}
Notice that $R c^2 \langle y, y \rangle=\m^l$ where $l=2(i-j)+k>i$.  
As $i-j+k>j$ we conclude
\begin{eqnarray*}
R \langle y, z' \rangle &=& R \langle y, x+cy \rangle \\
&=& R(\langle x,y \rangle + c \langle y,y \rangle) \\
&=& R(\langle x,y \rangle)=\m^j.
\end{eqnarray*}
These $z'$ and $y$ still satisfy the assumption of the lemma, since $l+k>i+k>2j$ and $l>j$. Notice that $\langle z',z' \rangle=\m^l$
where $l=2(i-j)+k>i$, so $n-l<n-i$ and we can apply our induction hypothesis to finish the proof. 
\end{proof}

\section{Proof of the equivalence} \label{pr}

In this section we will prove Theorem \ref{main1}.

\begin{proof}[Proof of Theorem \ref{main1}]

We first use Lemma \ref{ds} and Lemma \ref{ds2} to reduce to the case where $R$ is a local
artinian principal ideal ring. So assume that we are in the same situation as in the previous section and let $(R,\m)$ be a local
artinian principal ideal ring. Let $n$ be the length of $R$ as an $R$-module, and let $\m=(\pi)$. 

i $\implies$ ii: Recall that anisotropic vector spaces are non-degenerate. Hence the non-degeneracy follows from Theorem \ref{moma}. We will continue
by induction on $r$. If $r=0$ the statement follows directly. If $r=1$ we have $\langle\,\, ,\,\rangle=\langle\,\, ,\,\rangle_{\mathrm{odd}}$, which
is
anisotropic and as $\mathrm{lr}(M)=0$ the statement holds. Now continue with induction and suppose that $r \geq 2$. By Lemma \ref{sir3} it follows
that for any $L \subseteq M$ with $\m L^{\perp} \subseteq L \subseteq L^{\perp}$ we have $\m^{r-1}M \subseteq L$. Let $\varphi: M[\m^{r-1}] \to
\mathrm{Sh}(M)$ be the natural map. Now use Lemma \ref{sat} and the induction hypothesis on $\mathrm{Sh}(M)$ (use Lemma \ref{threes} to see that i
still holds) to conclude that $L=\varphi^{-1}(\mathrm{lr}(\mathrm{Sh}(M)))=\mathrm{lr}(M)$ (Lemma \ref{trivi}).

ii $\implies$ i: 
We will show that not i implies not ii. Suppose that i doesn't hold. If $\langle\,\, ,\,\rangle_{\mathrm{odd}}$ is isotropic, then we find $x \in
\mathrm{ur}(M) \setminus \mathrm{lr}(M)$ with $\langle x,x \rangle=0$. Apply Lemma \ref{killit} to $L=Rx$ to find a contradiction with ii. 

Now suppose that $\langle\,\, ,\,\rangle_{\mathrm{even}}$ is isotropic. We will show by induction on $r$ that we can find $L \subseteq M$ with $\m
L^{\perp} \subseteq L \subseteq L^{\perp}$, but $L \neq \mathrm{lr}(M)$. Remark that this can only happen if $r \geq 2$, since for $r=0, 1$ we have
$M/M[\m]=0$ and $\langle\,\, ,\,\rangle_{\mathrm{even}}$ is anisotropic. 

Suppose $r=2$. Then one can easily see that the assumptions of Lemma \ref{sir} are satisfied and hence there
exists $L \subseteq M$ with $\m L^{\perp} \subseteq L \subseteq L^{\perp}$, but $L \neq \mathrm{lr}(M)$.

We now continue by induction. Suppose that $r \geq 3$. Suppose that $x$ is an isotropic element of $\langle\,\, ,\,\rangle_{\mathrm{even}}$. Then $x$
gives an isotropic element in $\mathrm{Sh}(M)_{\mathrm{even}}$ (Lemma \ref{threes}, Lemma \ref{soe} and Definition \ref{goh}). As $\mathrm{Sh}(M)$ has
smaller exponent, we apply our induction hypothesis and we find $L' \subseteq \mathrm{Sh}(M)=M[\m^{r-1}]/\m^{r-1}M$ with $\m L'^{\perp} \subseteq L'
\subseteq L'^{\perp}$ and $L' \neq \mathrm{lr}(\mathrm{Sh}(M))$. Let $\varphi: M[\m^{r-1}] \to \mathrm{Sh}(M)$ be the canonical map, and let
$L=\varphi^{-1}(L')$. We see that $\m L^{\perp} \subseteq L \subseteq L^{\perp}$ (Lemma \ref{sat}) and $L \neq \mathrm{lr}(M)$. This contradicts ii.

ii $\implies$ iii: This is Lemma \ref{sir5}. 

iii $\implies$ ii: Suppose that $L \subseteq M$ satisfies $\m L^{\perp} \subseteq L \subseteq L^{\perp}$. Then iii gives $L \subseteq \mathrm{lr}(M)$ and $\mathrm{lr}(M)/L=\mathrm{lr}(L^{\perp}/L)=L/L$ (as $\m (L^{\perp}/L)=0$). It follows that $\mathrm{lr}(M)=L$ and we are done.

iii $\implies$ iv: Obvious. 

iv $\iff$ v: This is obvious. 

v $\implies$ ii if $\mathrm{char}(R/\m) \neq 2$: We will assume that $N=R$. We will give a proof by induction on the exponent of $M$ using shaving. Assume that $\m L^{\perp} \subseteq L \subseteq L^{\perp}$. By v we know that $L \subseteq \mathrm{lr}(M)$. We need to prove that $L=\mathrm{lr}(M)$.  

If $r=0,1$ we see that $L \subseteq \mathrm{lr}(M)=0$ and we are done. 

Now suppose that $r \geq 2$. As $L \subseteq \mathrm{lr}(M)$ it follows that $\mathrm{Ann}_R(L)=\m^i$ where $i \leq \lfloor \frac{r}{2} \rfloor <r$.
Let $\varphi: M[\m^{r-1}] \to \mathrm{Sh}(M)$ and consider $L'=\varphi(L)$, which by our induction hypothesis (and Lemma \ref{sat}) satisfies
$L'=\mathrm{lr}(\mathrm{Sh}(M))$. By Lemma \ref{trivi} we conclude that $L+\m^{r-1}M=\mathrm{lr}(M)$. Hence it is enough to prove that $\m^{r-1}M
\subseteq L$, or equivalently, $L^{\perp} \subseteq M[\m^{r-1}]$. Let $x \in L^{\perp}$, but $x \not \in M[\m^{r-1}]$. By assumption we know $\pi x
\in L$ and hence $0=\langle x,\pi x \rangle =\pi \langle x,x \rangle$, that is, $\langle x,x \rangle \in R[\m]$. Write $\langle x,x
\rangle=\pi^{n-1}r$ for some $r \in R$. By Lemma \ref{sir4} we can find $y \in M$ with $\langle x,y \rangle=\pi^{n-r}$. We can now apply Lemma
\ref{hensel} (here $i \geq n-1$, $j=n-r$ and $k \geq n-r$; we use that $r>1$ here) and we see that there is $c \in \m^{r-1}$ such that $z=x+cy$
satisfies $\langle z,z \rangle=0$. By our assumption in v we have $z \in \mathrm{lr}(M) \subseteq M[\m^{r-1}]$. Notice that $\pi cy=0$, and as $r \geq
2$ we find $cy \in M[\m^{r-1}]$. Hence we have $x=z-cy \in M[\m^{r-1}]$, a contradiction. This shows that $\mathrm{lr}(M)=L$ and hence we are done.

\end{proof}

\section{Quasi-anisotropy}

Let $R$ be an artinian principal ideal ring and let $M$ be a finitely
generated $R$-module. Let $N$ be an $R$-module such that $N \cong_R R$ and let $\bi: M \times M \to N$ be a symmetric $R$-bilinear form. In
this section we will define the concept of quasi-anisotropy for such a form $\bi$. Quasi-anistropy is a concept which doesn't give anything
interesting in the case where $R$ is a field. See \cite{KO1} and \cite{KO2} for the applications. 

\begin{definition} \label{sern}
First assume that $(R,\m)$ is local. Then $\bi$ is called \emph{quasi-anisotropic} if it is non-degenerate and both $\perp_{i \mathrm{\ even}} \rho_i(M)$ and $\perp_{i \mathrm{\ odd}, i \neq 1} \rho_i(M)$ are anisotropic (see Lemma \ref{goh}). 
Now assume that $R$ is an artinian principal ideal ring. Then $\bi$ is called \emph{quasi-anisotropic} if for all $\pa \in \mathrm{Spec}(R)$ the forms
$\bi_{\pa}: M_{\pa} \times M_{\pa} \to N_{\pa}$ (Lemma \ref{ds2}) are quasi-anisotropic. 
\end{definition}

\begin{remark}
If $\langle\,\, ,\,\rangle$ is anisotropic, it is automatically quasi-anisotropic by definition. For quasi-anisotropy we basically forget the
semi-simple part of $M$.
\end{remark}

\begin{definition}
We define $\mathrm{Soc}_R(M)$, the \emph{socle} of $M$, to be the sum of the simple submodules of $M$. Notice that this is a submodule of $M$.
\end{definition}

We will now give a couple of equivalent definitions of quasi-anistropy. 

\begin{theorem} \label{ksi}
Assume that $\bi$ is non-degenerate. Then the following statements are equivalent.  
\begin{enumerate}
\item
The form $\langle\,\, ,\,\rangle$ is quasi-anisotropic. 
\item
The induced form $\langle\,\, ,\,\rangle': M/\mathrm{Soc}_R(M) \times M/\mathrm{Soc}_R(M) \to R/\mathrm{Soc}_R(R)$ is anisotropic.
\item
For any $L \subseteq \mathrm{lr}(M)$ we have $\mathrm{lr}(L^{\perp}/L)=\mathrm{lr}(M)/L$.
\end{enumerate}
\end{theorem}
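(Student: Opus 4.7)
The plan is to reduce to the local case using Lemma \ref{ds2} and then prove \emph{i} $\Longleftrightarrow$ \emph{ii} and \emph{ii} $\Longleftrightarrow$ \emph{iii} separately. All three conditions respect the orthogonal primary decomposition $M = \perp_\pa M_\pa$: \emph{i} by Definition \ref{sern}; \emph{ii} because $\mathrm{Soc}_R(M) = \bigoplus_\pa M_\pa[\pa]$ and the induced form $\bi'$ on the quotient decomposes orthogonally; \emph{iii} because both $\mathrm{lr}$ and $\perp$ are compatible with the decomposition (using Corollary \ref{per}). So I assume $(R,\m)$ is local for the rest.

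For \emph{i} $\Longleftrightarrow$ \emph{ii}, I identify the odd and even parts of $\bi'$ on $M/\mathrm{Soc}_R(M) = M/M[\m]$ with pieces of $M$. In the structure theorem $M \cong \bigoplus_i (R/\m^i)^{n_i}$ we have $M/M[\m] \cong \bigoplus_{i \geq 2}(R/\m^{i-1})^{n_i}$, and Lemma \ref{soe} applied to $M/M[\m]$ produces natural index-shift isomorphisms $\rho_j(M/M[\m]) \cong \rho_{j+1}(M)$ for $j \geq 1$; these respect the forms from Definition \ref{goh} by a direct check on each homogeneous summand. Summing over odd respectively even $j$ gives $(M/M[\m])_{\mathrm{odd}} \cong \perp_{i \geq 2,\ i\ \mathrm{even}} \rho_i(M)$ and $(M/M[\m])_{\mathrm{even}} \cong \perp_{i \geq 3,\ i\ \mathrm{odd}} \rho_i(M)$, so anisotropy of $\bi'$ in the sense of Definition \ref{odd} (both parts anisotropic as $R/\m$-vector-space forms) coincides with quasi-anisotropy of $\bi$ as in Definition \ref{sern}.

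For \emph{ii} $\Longleftrightarrow$ \emph{iii}, I apply Theorem \ref{main1} (\emph{i} $\Longleftrightarrow$ \emph{iii}) to the non-degenerate form $\bi'$ on $M/\mathrm{Soc}_R(M)$ over the artinian principal ideal ring $R/\mathrm{Soc}_R(R)$. This shows \emph{ii} is equivalent to: for every $L' \subseteq M/M[\m]$ with $L' \subseteq L'^{\perp'}$, we have $L' \subseteq \mathrm{lr}(M/M[\m])$ and $\mathrm{lr}(L'^{\perp'}/L') = \mathrm{lr}(M/M[\m])/L'$. The translation to \emph{iii} goes via $L \mapsto L' = (L + M[\m])/M[\m]$. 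In the forward direction, any $L \subseteq \mathrm{lr}(M)$ satisfies $L \subseteq L^{\perp}$ automatically (by Corollary \ref{ji}), produces an isotropic $L'$ inside $\mathrm{lr}(M/M[\m])$, and the desired equality $\mathrm{lr}(L^{\perp}/L) = \mathrm{lr}(M)/L$ is recovered from the analogous equality for $L'$ through the explicit lower-root descriptions in Lemma \ref{loup} on each homogeneous summand. For the contrapositive, if \emph{ii} fails then Theorem \ref{main1} applied to $\bi'$ supplies an isotropic $L'$ in $M/M[\m]$ strictly larger than $\mathrm{lr}(M/M[\m])$ with $L'^{\perp'}/L'$ semisimple; lifting via a Lemma \ref{sir3}/Lemma \ref{sir} style construction produces an $L \subseteq \mathrm{lr}(M)$ violating \emph{iii}.

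The main obstacle will be the translation of the preceding paragraph: the correspondence $L \mapsto L'$ from isotropic submodules of $M$ contained in $\mathrm{lr}(M)$ to isotropic submodules of $M/M[\m]$ is not a simple bijection, and matching the two lower-root identities requires careful bookkeeping via Lemma \ref{loup}. In each homogeneous component $(R/\m^i)^{n_i}$ with $i$ odd and $\geq 3$, the image of $\mathrm{lr}(M)$ in $M/M[\m]$ is strictly smaller than $\mathrm{lr}(M/M[\m])$, and the gap is precisely the $\rho_i(M)$-summand of $(M/M[\m])_{\mathrm{even}}$; controlling this gap is exactly what anisotropy of $(M/M[\m])_{\mathrm{even}}$ in condition \emph{ii} provides.
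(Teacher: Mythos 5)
Your reduction to the local case and your proof of \emph{i} $\Longleftrightarrow$ \emph{ii} are correct and coincide with the paper's: the index shift $\rho_j(M/M[\m]) \cong \rho_{j+1}(M)$ turns the odd/even parts of $\bi'$ into $\perp_{i \geq 2 \text{ even}} \rho_i(M)$ and $\perp_{i \geq 3 \text{ odd}} \rho_i(M)$, which is exactly Definition \ref{sern}.

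The equivalence \emph{ii} $\Longleftrightarrow$ \emph{iii} is where you have a genuine gap: you name the hard step (``the main obstacle will be the translation'') but do not carry it out, and the translation as sketched does not go through. Concretely, orthogonality in $M/\mathrm{Soc}_R(M)$ is taken with respect to $\bi'$, so for $L' = (L+M[\m])/M[\m]$ one has $L'^{\perp'} = \{[x] : \langle x, L\rangle \subseteq N[\m]\} = (\m L)^{\perp}/M[\m]$, whence $L'^{\perp'}/L' \cong (\m L)^{\perp}/(L+M[\m])$. This is a different module from $L^{\perp}/L$, and since $L$ is an arbitrary submodule of $\mathrm{lr}(M)$ it does not respect any homogeneous decomposition of $M$, so there is no way to ``recover'' $\mathrm{lr}(L^{\perp}/L) = \mathrm{lr}(M)/L$ from the corresponding identity upstairs by reading off Lemma \ref{loup} summand by summand. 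In the contrapositive direction your appeal to Theorem \ref{main1} is also off: failure of anisotropy of $\bi'$ yields some $L' \neq \mathrm{lr}(M/M[\m])$ with $L' \subseteq L'^{\perp'}$ and $L'^{\perp'}/L'$ semisimple, not one ``strictly larger than'' the lower root; and Lemmas \ref{sir3} and \ref{sir} are about the shaving quotient $M[\m^{r-1}]/\m^{r-1}M$, not about $M/M[\m]$, so they are not available as lifting tools here. The paper avoids this quotient entirely: for \emph{i} $\implies$ \emph{iii} it splits $M = M_1 \perp M'$ with $M_1$ semisimple and $\rho_1(M')=0$ (Theorem \ref{moma}), observes that quasi-anisotropy of $M$ is anisotropy of the orthogonal summand $M'$, and applies Lemma \ref{sir5} there (Lemma \ref{paro2}); for \emph{iii} $\implies$ \emph{i} it takes an isotropic vector, forms $L = R\pi^{r-1}x \subseteq \mathrm{lr}(M)$ in the homogeneous case, and shows by an explicit length count that $\mathrm{lr}(L^{\perp}/L)$ and $\mathrm{lr}(M)/L$ have different lengths, then handles the general case by induction via shaving. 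Some argument of comparable substance is needed to close your \emph{ii} $\Longleftrightarrow$ \emph{iii}.
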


We first prove the following lemma. 

\begin{lemma} \label{paro2}
Assume that $\bi$ is quasi-anisotropic. Let $L \subseteq \mathrm{lr}(M)$ be a submodule. Then we have:
\begin{enumerate}
\item
$L \subseteq L^{\perp}$;
\item
$L^{\perp}/L$ is quasi-anisotropic;
\item
$\mathrm{lr}(L^{\perp}/L)=\mathrm{lr}(M)/L$.
\end{enumerate}
\end{lemma}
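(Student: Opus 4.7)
My first step is to reduce to the local case with $(R, \m)$ local and $\m = (\pi)$. By Lemma \ref{ds2} we have the orthogonal decomposition $M = \perp_{\pa} M_{\pa}$ and correspondingly $L = \oplus L_{\pa}$ with each $L_{\pa} \subseteq \mathrm{lr}(M_{\pa})$; since quasi-anisotropy and the lower root both decompose prime-by-prime, all three statements can be checked one prime at a time.

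For (i) the argument is formal. By Corollary \ref{ji}, $\mathrm{lr}(M) \subseteq \mathrm{lr}(M)^{\perp}$, and because $L \subseteq \mathrm{lr}(M)$, Corollary \ref{per} gives $\mathrm{lr}(M)^{\perp} \subseteq L^{\perp}$. Chaining these yields $L \subseteq \mathrm{lr}(M) \subseteq \mathrm{lr}(M)^{\perp} \subseteq L^{\perp}$. Note that only non-degeneracy is used here, not quasi-anisotropy.

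For (ii) and (iii) I plan to induct on $\mathrm{length}_R(L)$, allowing the ambient quasi-anisotropic module $M$ to vary. The case $L = 0$ is immediate. For the inductive step with $\mathrm{length}_R(L) = k \geq 2$, pick $L_0 \subsetneq L$ of length $k-1$. The inductive hypothesis applied to $L_0 \subseteq \mathrm{lr}(M)$ makes $M' = L_0^{\perp}/L_0$ quasi-anisotropic with $\mathrm{lr}(M') = \mathrm{lr}(M)/L_0$; then $\bar{L} = L/L_0 \subseteq \mathrm{lr}(M')$ has length $1$, so a second application of the inductive hypothesis, now inside $M'$, gives the quasi-anisotropy of $\bar{L}^{\perp}/\bar{L} = L^{\perp}/L$ together with $\mathrm{lr}(L^{\perp}/L) = \mathrm{lr}(M')/\bar{L} = \mathrm{lr}(M)/L$. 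Everything thus reduces to the base case where $L = Rx$ is simple.

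In this simple case ($\pi x = 0$, $0 \neq x \in \mathrm{lr}(M)$), my plan is to use Theorem \ref{rete} (ii) iteratively to fix an orthogonal decomposition $M = M_1 \perp \cdots \perp M_n$ aligned with $x$; note that $\mathrm{lr}(M_1) = 0$ forces the components of $x$ to live in $M_i$ for $i \geq 2$ only. I would then compute the $\rho_i$'s and the induced forms on $L^{\perp}/L$ using Lemma \ref{soe} and Definition \ref{goh}, and check that for $i$ even and for $i \geq 3$ odd the forms on $\rho_i(L^{\perp}/L)$ are obtained from the forms on $\rho_i(M)$ by subquotient operations preserving anisotropy (while any newly introduced isotropic direction lands only in $\rho_1$). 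Quasi-anisotropy of $M$ would thus transfer to $L^{\perp}/L$, yielding (ii), and a parallel component-wise bookkeeping yields $\mathrm{lr}(L^{\perp}/L) = \mathrm{lr}(M)/L$, giving (iii).

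The main obstacle is precisely the simple case: an arbitrary simple $x \in \mathrm{lr}(M)$ generically has nonzero components across several of the blocks $M_i$, so aligning the decomposition with $x$ is nontrivial and the $\rho_i$-level computation must account for how each component shifts under the quotient. In particular, the step where a new semi-simple summand may appear in $L^{\perp}/L$ is exactly why quasi-anisotropy rather than anisotropy is the correct hypothesis: the new isotropic direction lives in $\rho_1$, which is the slack that quasi-anisotropy permits.
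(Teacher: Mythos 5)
Part (i) is fine, and your inductive reduction of (ii)--(iii) to the case of a simple $L=Rx$ is logically sound (the passage from $L_0$ to $L/L_0$ inside $L_0^{\perp}/L_0$ does work, using (iii) of the inductive hypothesis to see $L/L_0\subseteq\mathrm{lr}(L_0^{\perp}/L_0)$). But the base case is exactly where the content of the lemma lives, and you do not prove it: you describe a plan (``align'' a decomposition $M=M_1\perp\cdots\perp M_n$ with $x$, then track the $\rho_i$'s through the quotient) and then yourself name the obstruction -- a simple $x\in\mathrm{lr}(M)$ generically has components in several blocks $M_i$, and Theorem \ref{rete}(ii) only lets you prescribe one homogeneous block whose $\rho_j$ maps isomorphically to $\rho_j(M)$; it does not let you put such an $x$ into a single block. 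So the proposal, as written, has a genuine gap at its core step.

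The paper's proof avoids this entirely and is worth internalizing: working locally, write $M=M_1\perp M'$ with $M_1$ free over $R/\m$ and $\rho_1(M')=0$ (Theorem \ref{moma}). Quasi-anisotropy of $M$ says precisely that $M'$ is \emph{anisotropic}, since discarding $\rho_1$ is all that distinguishes the two notions. Because $\mathrm{lr}(M_1)=0$, the hypothesis $L\subseteq\mathrm{lr}(M)=\mathrm{lr}(M')$ forces $L=\{0\}\perp L'$ with $L'\subseteq M'$, so $L^{\perp}=M_1\perp L'^{\perp}$ and $L^{\perp}/L=M_1\perp(L'^{\perp}/L')$. Now Lemma \ref{sir5}, applied to the anisotropic module $M'$ (whose characterizing property is that $\mathrm{lr}(M')$ is the unique $L'$ with $\m L'^{\perp}\subseteq L'\subseteq L'^{\perp}$), gives at once that $L'^{\perp}/L'$ is anisotropic and $\mathrm{lr}(L'^{\perp}/L')=\mathrm{lr}(M')/L'$; reattaching the semisimple block $M_1$ yields quasi-anisotropy of $L^{\perp}/L$ and $\mathrm{lr}(L^{\perp}/L)=\mathrm{lr}(M)/L$. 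If you want to salvage your outline, replace the $\rho_i$-bookkeeping in your base case by this splitting-off of the $\rho_1$-part and an appeal to Lemma \ref{sir5}; at that point the induction on $\mathrm{length}_R(L)$ becomes unnecessary, since Lemma \ref{sir5} handles arbitrary $L'\subseteq L'^{\perp}$ in one stroke.
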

\begin{proof}
We may assume that $(R,\m)$ is local. 
We have $L \subseteq \mathrm{lr}(M) \subseteq \mathrm{ur}(M) \subseteq L^{\perp}$. Write $M=M_1 \perp M'$ where $M_1$ is free over $R/\m$ and
$\rho_1(M')=0$ (Theorem \ref{moma}). As $\mathrm{lr}(M_1)=0$ we can write $L=\{0\} \perp L'$. Then $L^{\perp}=M_1 \perp
L'^{\perp}$ (where $L'^{\perp} \subseteq M'$). Notice that by construction $M'$ is anisotropic. From Lemma \ref{sir5}
we see that $L'^{\perp}/L'$ is anisotropic and $\mathrm{lr}(M')/L'=\mathrm{lr}(L'^{\perp}/L')$. Hence $L^{\perp}/L=M_1 \perp L'^{\perp}/L'$ is
quasi-anisotropic. We then find
\begin{eqnarray*}
\mathrm{lr}(L^{\perp}/L) &=& \mathrm{lr}(M_1/0) \oplus \mathrm{lr}(L'^{\perp}/L') \\
&=& 0 \oplus \mathrm{lr}(M')/L' \\
&=& \mathrm{lr}(M)/L.
\end{eqnarray*}
\end{proof}

\begin{proof}[Proof of Theorem \ref{ksi}]
We again assume that $(R,\m)$ is local. In this case $\mathrm{Soc}_R(M)=M[\m]$ and $\mathrm{Soc}_R(R)=R[\m]$. Let $\pi \in R$ such that $(\pi)=\m$. 

i $\Longleftrightarrow$ ii: This directly follows from the fact that $\rho_i(M/M[\m]) = \rho_{i+1}(M)$ for $i \geq 1$, and hence we just lose
$\rho_1(M)$. Now check the definition.

i $\implies$ iii: This is Lemma \ref{paro2}.

iii $\implies$ i:  Suppose that $\perp_{i \mathrm{\ even}} \rho_i(M)$ or $\perp_{i>1 \mathrm{\ odd}} \rho_i(M)$ is isotropic, so automatically $r \geq
2$ (where $\mathrm{Ann}_R(M)=\m^r$). We will find a module $L \subseteq \mathrm{lr}(M)$ with $\mathrm{lr}(L^{\perp}/L) \neq \mathrm{lr}(M)/L$. First
assume that $M$ is homogeneous, say $M \cong (R/\m^r)^s$. Choose $x \in M \setminus \m M$ with $\langle x, \pi^{r-1}x \rangle=0$ (that such an $x$
exists is left to the reader). Consider the submodule $L=R \pi^{r-1} x \subseteq \mathrm{lr}(M)$ ($r \geq 2$ needed), which
satisfies $L \subseteq L^{\perp}$. Then a simple calculation, using Theorem \ref{ar1}, gives
\begin{eqnarray*}
\mathrm{length}_R \left( \mathrm{lr}(L^{\perp}/L) \right) &=& 2 \left\lfloor \frac{r-1}{2} \right\rfloor +(s-2) \left\lfloor \frac{r}{2}
\right\rfloor 
\end{eqnarray*}
and 
\begin{eqnarray*}
\mathrm{length}_R(\mathrm{lr}(M)/L) &=& s \left\lfloor \frac{r}{2} \right\rfloor-1.
\end{eqnarray*} 
The difference of these lengths is $2 \left(\lfloor \frac{r-1}{2} \rfloor - \lfloor \frac{r}{2} \rfloor \right)+1 \neq 0$. Hence we are done in
the homogeneous case. 

Now we will do the general case. Write $M=M_1 \perp \ldots \perp M_n$ as in Theorem \ref{moma}. If some $\rho_i(M)$ is isotropic, then again there is $x \in M_i$ with $\langle x, \pi^{i-1} x \rangle=0$ and we can consider $L=R\pi^{i-1} x$ and as above we contradict iii. 

We will give a proof by induction on $r$. If $r=2,3$ then we know that either $\rho_2(M)$ or $\rho_3(M)$ is isotropic, and we have considered this
case. Assume that $r \geq 4$ and let $\varphi: M[\m^{r-1}] \to M[\m^{r-1}]/\m^{r-1}M=\mathrm{Sh}(M)$ be the natural surjection. By our induction
hypothesis, in combination with Lemma \ref{threes}, we know that there is $L \subseteq \mathrm{lr}(\mathrm{Sh}(M))$ with $\mathrm{lr}(L^{\perp}/L)
\neq \mathrm{lr}(\mathrm{Sh}(M))/L$. By Lemma \ref{trivi} we have $\varphi^{-1}(\mathrm{lr}(\mathrm{Sh}(M)))=\mathrm{lr}(M)$. Hence $\varphi^{-1}(L)
\subseteq \mathrm{lr}(M)$.  
We now have
\begin{eqnarray*}
\mathrm{lr}(L^{\perp}/L) &\cong& \mathrm{lr}( \varphi^{-1}(L)^{\perp}/\varphi^{-1}(L))
\end{eqnarray*}
and
\begin{eqnarray*}
\mathrm{lr}(\mathrm{Sh}(M))/L &\cong& \mathrm{lr}(M)/\varphi^{-1}(L).
\end{eqnarray*}
As these maps are all natural, we find $\mathrm{lr}( \varphi^{-1}(L^{\perp})/\varphi^{-1}(L)) \neq \mathrm{lr}(M)/\varphi^{-1}(L)$ and this finishes
our proof. 
\end{proof}

\section{Determining the radical root}

Let $R$ be an artinian principal ideal ring and let $M$ be a finitely
generated $R$-module. Let $N$ be an $R$-module such that $N \cong_R R$ and let $\bi: M \times M \to N$ be a non-degenerate symmetric $R$-bilinear
form.
Recall the definition of the radical root of $(M,\bi)$ (Definition \ref{rad}):
\begin{eqnarray*}
\mathrm{rr}(M,\bi)=\mathrm{rr}(M)=\bigcap_{L \subseteq M: L \subseteq L^{\perp},\ L^{\perp}/L \mathrm{\ semisimple}} L.
\end{eqnarray*}
In this section we will discuss how one can determine this radical root.

Notice that we have the following formula, which implicitly uses Lemma
\ref{ds2}:
\begin{eqnarray*}
 \mathrm{rr}(M)=\bigoplus_{\pa \in \mathrm{Spec}(R)}\mathrm{rr}(M_{\pa}).
\end{eqnarray*}
For simplicity we will only study the case when $(R,\m)$ is local. The general case follows from the above formula.  

\begin{definition}
For $s \in \Z$ we define 
\begin{eqnarray*}
\mathrm{lr}_s(M)=\sum_{i \in \Z_{\geq 0}} \m^i M \cap M[\m^{i-s+1}]
\end{eqnarray*}
where we define $M[\m^{-t}]=0$ if $t \geq 0$. Remark that by definition we have $\mathrm{lr}_1(M)=\mathrm{lr}(M)$. 
\end{definition}

\begin{remark}
Remark that the above definition makes sense for any finitely generated $R$-module $M$ and furthermore that no form is needed in the definition.
Remark that $\mathrm{lr}_s(M)
\supseteq \mathrm{lr}_{s'}(M)$ if $s \leq s'$. Morover we see directly from the definition that $\mathrm{lr}_{s}(M)/\mathrm{lr}_{s-1}(M)$ is an
$R/\m$-module. If $\mathrm{Ann}(M)=\m^r$, we have $M=\mathrm{lr}_{-r+1}(M)$ and $\mathrm{lr}_r(M)=0$. Also notice that $\mathrm{lr}_s$ commutes with
direct sums. 
\end{remark}

The main theorem of this section is the following. Its proof will be given later in this section. 	

\begin{theorem} \label{srt}
Let $(R,\m)$ be local. Let $d \in \Z_{\geq 2}$ be minimal such that $\perp_{i \geq d \mathrm{\ even}} \rho_i(M)$ and
$\perp_{i \geq d \mathrm{\ odd}} \rho_i(M)$ are anisotropic over $R/\m$. The we have $\mathrm{rr}(M) \supseteq \mathrm{lr}_{d-1}(M)$.
\end{theorem}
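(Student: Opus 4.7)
The plan is to reduce to the local case using Lemma \ref{ds2} (both $\mathrm{rr}$ and $\mathrm{lr}_{d-1}$ are compatible with the primary decomposition $M = \oplus_{\pa} M_{\pa}$) and then induct on the exponent $r$ defined by $\mathrm{Ann}_R(M) = \m^r$. So assume $(R,\m)$ is local with uniformizer $\pi$. The statement to prove becomes: for every submodule $L \subseteq M$ with $L \subseteq L^{\perp}$ and $L^{\perp}/L$ semi-simple---equivalently $\m L^{\perp} \subseteq L \subseteq L^{\perp}$---one has $\mathrm{lr}_{d-1}(M) \subseteq L$.

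If $r < d$ the defining sum $\mathrm{lr}_{d-1}(M) = \sum_i \m^i M \cap M[\m^{i-d+2}]$ vanishes, since a nonzero summand would require both $i < r$ (to keep $\m^i M \neq 0$) and $i \geq d-1$ (to keep $M[\m^{i-d+2}] \neq 0$). Thus the base cases are trivial and we may assume $r \geq d \geq 2$. The summand $\rho_r(M)$ of the anisotropic form $\perp_{k \geq d,\ k \equiv r \pmod 2} \rho_k(M)$ is itself anisotropic, so Lemma \ref{sir3} gives $\m^{r-1}M \subseteq L$ for every admissible $L$; dualising via Corollary \ref{per} and non-degeneracy yields $L \subseteq L^{\perp} \subseteq (\m^{r-1}M)^{\perp} = M[\m^{r-1}]$.

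Hence every admissible $L$ satisfies $\m^{r-1}M \subseteq L \subseteq M[\m^{r-1}]$, and Lemma \ref{sat} sets up a bijection $L \leftrightarrow \varphi(L)$ between such $L$ and the admissible submodules of $\mathrm{Sh}(M)$; in particular $\mathrm{rr}(M) = \varphi^{-1}(\mathrm{rr}(\mathrm{Sh}(M)))$. Since $\mathrm{Sh}(M)$ has strictly smaller exponent, the induction hypothesis applies: Lemma \ref{threes} shows that $\perp_{k \geq d} \rho_k(\mathrm{Sh}(M))$ in each parity is the sum $\perp_{k \geq d} \rho_k(M)$ with the top piece $\rho_r(M)$ absorbed into $\rho_{r-2}(\mathrm{Sh}(M))$, hence still anisotropic, so the analogous $d$-invariant of $\mathrm{Sh}(M)$ is at most $d$, and induction delivers $\mathrm{rr}(\mathrm{Sh}(M)) \supseteq \mathrm{lr}_{d-1}(\mathrm{Sh}(M))$. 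Noting that $\ker \varphi = \m^{r-1}M \subseteq \mathrm{lr}_{d-1}(M)$ (because $\m^{r-1}M \subseteq M[\m] \subseteq M[\m^{r-d+1}]$ once $r \geq d$, so $\m^{r-1}M = \m^{r-1}M \cap M[\m^{r-d+1}]$ already appears in the defining sum), and granting the compatibility $\varphi(\mathrm{lr}_{d-1}(M)) \subseteq \mathrm{lr}_{d-1}(\mathrm{Sh}(M))$, we conclude $\mathrm{lr}_{d-1}(M) = \varphi^{-1}(\varphi(\mathrm{lr}_{d-1}(M))) \subseteq \varphi^{-1}(\mathrm{rr}(\mathrm{Sh}(M))) = \mathrm{rr}(M)$.

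The main obstacle is the compatibility $\varphi(\mathrm{lr}_{d-1}(M)) \subseteq \mathrm{lr}_{d-1}(\mathrm{Sh}(M))$, which is \emph{not} automatic from naive functoriality since shaving identifies $\mathrm{Sh}(R/\m^r) \cong R/\m^{r-2}$ and thereby shifts the index in the defining sum of the top-degree cyclic summand. I would handle it by reducing to cyclic summands via Theorem \ref{prod} and the additivity of $\mathrm{lr}_{d-1}$: for $k < r$ the shaving map is the identity on $R/\m^k$, while for $k = r$ a direct computation establishes $\mathrm{lr}_{d-1}(R/\m^k) = \m^{\lceil (k+d-2)/2 \rceil}/\m^k$, and the ceiling identity $\lceil (r+d-2)/2 \rceil = \lceil (r+d-4)/2 \rceil + 1$ (valid for both parities of $r+d$) matches this formula on $R/\m^r$ with the corresponding one on $R/\m^{r-2}$ under $\varphi$, in fact yielding the equality $\varphi(\mathrm{lr}_{d-1}(M)) = \mathrm{lr}_{d-1}(\mathrm{Sh}(M))$.
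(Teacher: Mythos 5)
Your proof is correct and follows essentially the same route as the paper's: induction on the exponent $r$ via shaving, using Lemma \ref{sir3} together with Lemma \ref{sat} (the paper packages these as Lemma \ref{serp}) to get $\mathrm{rr}(M)=\varphi^{-1}(\mathrm{rr}(\mathrm{Sh}(M)))$, Lemma \ref{threes} to see the invariant $d$ does not increase, and an explicit cyclic-summand computation with the length formula of Lemma \ref{toto} to match $\mathrm{lr}_{d-1}$ across the shaving map. The paper organizes the last step as a term-by-term computation of $\varphi^{-1}$ applied to the explicit decomposition $\bigoplus_{d\le i\le r}\m^{\lfloor (i+d-1)/2\rfloor}M_i$, whereas you phrase it as $\ker\varphi\subseteq\mathrm{lr}_{d-1}(M)$ plus $\varphi(\mathrm{lr}_{d-1}(M))=\mathrm{lr}_{d-1}(\mathrm{Sh}(M))$; these are the same calculation.
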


The following lemma gives a more explicit description of the $\mathrm{lr}_s(M)$. 

\begin{lemma} \label{toto}
 Suppose that $(R,\m)$ is local and let $s \in \Z$. Then the following hold.
\begin{enumerate}
 \item 
 Suppose that $M$ is cyclic of length $r$, then we have
\begin{eqnarray*}
 \mathrm{length}_R(\mathrm{lr}_s(M))= \left\{ \begin{array}{cc}
                                              \lfloor \frac{r-s+1}{2} \rfloor & \mathrm{if\ } 0 \leq \lfloor \frac{r-s+1}{2} \rfloor \leq r \\
					      0	& \mathrm{if\ } \lfloor \frac{r-s+1}{2} \rfloor  \leq 0 \\
					      r & \mathrm{if\ } \lfloor \frac{r-s+1}{2} \rfloor  \geq r.
                                              \end{array}\right.
\end{eqnarray*}

 \item We have $\mathrm{lr}_s(M)^{\perp}=\mathrm{lr}_{1-s}(M)$.
\end{enumerate}
\end{lemma}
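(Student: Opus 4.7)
The plan is to prove (i) by direct computation in the cyclic module and to deduce (ii) by combining an elementary inclusion $\mathrm{lr}_{1-s}(M) \subseteq \mathrm{lr}_s(M)^{\perp}$ with a length-counting argument.

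For (i), write $M = R/\m^r$. Every submodule has the form $\pi^t R/\pi^r R$, and a direct calculation gives $\m^i M = \pi^{\min(i,r)} R/\pi^r R$ together with $M[\m^j] = \pi^{\max(0,\,r-j)} R/\pi^r R$ for $j \geq 0$ (and $M[\m^j] = 0$ if $j < 0$). The intersection $\m^i M \cap M[\m^{i-s+1}]$ has length $\min(r-i,\,i-s+1)$ whenever this is non-negative, and the sum defining $\mathrm{lr}_s(M)$ is simply the largest such submodule. Evaluating the max-min $\max_i \min(r-i,\,i-s+1)$ produces $\lfloor (r-s+1)/2 \rfloor$ in the generic case, saturating to $0$ or $r$ in the two edge cases of the statement.

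For (ii), I first reduce to $(R,\m)$ local via Lemma \ref{ds2}. To establish $\mathrm{lr}_{1-s}(M) \subseteq \mathrm{lr}_s(M)^{\perp}$, I pair generators $x = \pi^i a \in \m^i M \cap M[\m^{i+s}]$ of $\mathrm{lr}_{1-s}(M)$ against $y = \pi^j b \in \m^j M \cap M[\m^{j-s+1}]$ of $\mathrm{lr}_s(M)$. The annihilator conditions give $\pi^{2i+s} a = 0$ and $\pi^{2j-s+1} b = 0$, whence $\langle a, b \rangle \in N[\m^{\min(2i+s,\,2j-s+1)}]$. Then $\pi^{i+j} \langle a, b \rangle = 0$ follows from the inequality $i + j \geq \min(2i+s,\,2j-s+1)$, which unwinds to the dichotomy $j - i \geq s$ or $j - i \leq s - 1$ — an integer tautology. (When $\min(2i+s,\,2j-s+1) > n$, the same dichotomy forces $i+j > n$ directly, so $\pi^{i+j}$ annihilates $N$.)

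For the reverse containment I appeal to lengths: by Theorem \ref{moma}, decompose $M = \perp_k M_k$ with each $M_k$ free non-degenerate over $R/\m^k$. Since $\mathrm{lr}_s$ commutes with direct sums, part (i) applied summand-by-summand yields
\[ \mathrm{length}_R(\mathrm{lr}_s(M)) + \mathrm{length}_R(\mathrm{lr}_{1-s}(M)) = \mathrm{length}_R(M), \]
after verifying the identity $\lfloor (k-s+1)/2 \rfloor + \lfloor (k+s)/2 \rfloor = k$ in the generic range (the two summands total $2k+1$, so exactly one is even) and checking the clipped edge cases. Combined with Theorem \ref{ar1}, which gives $\mathrm{length}_R(\mathrm{lr}_s(M)^{\perp}) = \mathrm{length}_R(M) - \mathrm{length}_R(\mathrm{lr}_s(M))$, the established containment is forced to an equality. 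The main obstacle is the orthogonality step in (ii): one has to spot the exhaustive dichotomy $\{j - i \geq s\} \cup \{j - i \leq s - 1\}$ to collapse the annihilator bounds into vanishing, after which the rest is routine edge-case accounting.
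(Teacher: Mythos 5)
Your part (i) is essentially the paper's own proof: both compute the length of $\m^i M \cap M[\m^{i-s+1}]$ as the clipped $\min(r-i,\,i-s+1)$ and evaluate the max--min to get $\lfloor (r-s+1)/2\rfloor$ clamped to $[0,r]$. For part (ii) the paper offers no proof at all (``left to the reader, won't be used elsewhere''), so your argument is a genuine completion rather than a variant. It checks out: the pairing computation $\pi^{2i+s}a=0$, $\pi^{2j-s+1}b=0$, hence $\pi^{i+j}\langle a,b\rangle=0$ via the tautological dichotomy $j-i\geq s$ or $j-i\leq s-1$, correctly gives $\mathrm{lr}_{1-s}(M)\subseteq \mathrm{lr}_s(M)^{\perp}$ (the parenthetical about $\min>n$ is redundant, since the inequality $i+j\geq\min(2i+s,2j-s+1)$ already suffices); and the length identity $\lfloor (k-s+1)/2\rfloor+\lfloor (k+s)/2\rfloor=k$, summed over the orthogonal decomposition of Theorem \ref{moma} and compared with Theorem \ref{ar1}(iii), forces equality. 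The one hypothesis you should make explicit is non-degeneracy of $\bi$: both Theorem \ref{ar1}(iii) and the decomposition of Theorem \ref{moma} require it. It is the standing assumption of the section containing the lemma, so the proof is complete as stated; also note that the reduction ``to the local case via Lemma \ref{ds2}'' is vacuous here, since the lemma already assumes $(R,\m)$ local.
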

\begin{proof}
We prove the first statement. Let $t=\mathrm{min}(r-i,i-s+1)$. Notice that $\m^i M \cap M[\m^{i-s+1}]$ is the unique submodule of $M$ of length $t$ 
if $0 \leq t \leq r$, of length $0$ if $t \leq 0$ and of lenght $r$ if $t \geq r$. Hence we have
\begin{eqnarray*}
\mathrm{length}(\mathrm{lr}_{s}(M)) &=& \mathrm{min} \left( \mathrm{max} \left( \{\mathrm{min}(r-i,i-s+1): i \in \Z_{\geq 0}\} \cup \{0\} \right)
\cup \{r\} \right) \\
&=&  \mathrm{min} \left(\mathrm{max} \left( \{\mathrm{min}(r+s-1-i,i)-(s-1): i \in \Z_{\geq 0}\} \cup \{0\} \right) \cup \{r\} \right)\\
&=&  \mathrm{min} \left(\mathrm{max}(0, \lfloor \frac{r+s-1}{2} \rfloor-(s-1)) \cup \{r\} \right)\\
&=&  \mathrm{min} \left(\mathrm{max}(0, \lfloor \frac{r-s+1}{2} \rfloor) \cup \{r\} \right).
\end{eqnarray*} 
This finishes the proof of the first part.

We leave the proof of the second statement, which won't be used anywhere else in this article, to the reader. 
\end{proof}

The main ingredient in the proof of Theorem \ref{srt} is the following lemma. 

\begin{lemma} \label{serp}
Let $(R,\m)$ be local and let $\mathrm{Ann}_R(M)=\m^r$ (where $0 \leq r \leq \mathrm{length}_R(R)$). If $r \geq 2$ we let $\varphi: M[\m^{r-1}] \to
M[\m^{r-1}]/\m^{r-1}M=\mathrm{Sh}(M)$ be the natural map. 
Then we have
\begin{eqnarray*}
\mathrm{rr}(M)= \left\{ \begin{array}{cc}
0 & \mathrm{if\ }r \leq 1; \\
\varphi^{-1}\left(\mathrm{rr}(\mathrm{Sh}(M))\right) & \mathrm{if\ }r \geq 2 \mathrm{\ and\ } \rho_r(M) \mathrm{\ is\ anisotropic}.
\end{array} \right.
\end{eqnarray*}
\end{lemma}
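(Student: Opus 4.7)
\textbf{Plan for the proof of Lemma \ref{serp}.}

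The proof splits naturally into the two cases of the statement. For $r = 0$ we have $M = 0$ and the claim is trivial; for $r = 1$ we have $\m M = 0$, so $M$ is annihilated by $\m$, hence semi-simple. Taking $L = 0$ then gives $L \subseteq L^{\perp} = M$ with $L^{\perp}/L$ semi-simple, so $0$ occurs in the intersection defining $\mathrm{rr}(M)$ and the intersection collapses to $0$.

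The substantive case is $r \geq 2$ with $\rho_r(M)$ anisotropic. Here I first rewrite the condition ``$L^{\perp}/L$ semi-simple'' as ``$\m L^{\perp} \subseteq L$'' (equivalent in a local artinian principal ideal ring), so that
\[
\mathrm{rr}(M) \;=\; \bigcap_{\substack{L \subseteq M \\ \m L^{\perp} \subseteq L \subseteq L^{\perp}}} L.
\]
Next I invoke Lemma \ref{sir3}: under the standing non-degeneracy and the anisotropy of $\rho_r(M)$, every $L$ satisfying $\m L^{\perp} \subseteq L \subseteq L^{\perp}$ contains $\m^{r-1}M$. Since $\m^{r-1}M \subseteq L$ and $L \subseteq L^{\perp}$, applying $\perp$ and using $(\m^{r-1}M)^{\perp} = M[\m^{r-1}]$ (which holds by non-degeneracy and Theorem \ref{ar1}) gives $L \subseteq M[\m^{r-1}]$. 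So every $L$ appearing in the intersection automatically satisfies $\m^{r-1}M \subseteq L \subseteq M[\m^{r-1}]$.

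At this point I am exactly in the situation of the restricted bijection of Lemma \ref{sat}: the map $L \mapsto \varphi(L)$ is a bijection from the set of such $L \subseteq M$ onto the set of $L' \subseteq \mathrm{Sh}(M)$ with $\m L'^{\perp} \subseteq L' \subseteq L'^{\perp}$, with inverse $L' \mapsto \varphi^{-1}(L')$. Since $\varphi^{-1}$ commutes with intersections,
\[
\mathrm{rr}(M) \;=\; \bigcap_{L'} \varphi^{-1}(L') \;=\; \varphi^{-1}\!\left(\bigcap_{L'} L'\right) \;=\; \varphi^{-1}\bigl(\mathrm{rr}(\mathrm{Sh}(M))\bigr),
\]
where $L'$ ranges exactly over the index set defining $\mathrm{rr}(\mathrm{Sh}(M))$ after the same ``semi-simple $\iff$ annihilated by $\m$'' rewrite.

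The main obstacle is a bookkeeping one rather than a computational one: I need to verify that the index set for $\mathrm{rr}(M)$, after being squeezed by Lemma \ref{sir3} into the slice $\m^{r-1}M \subseteq L \subseteq M[\m^{r-1}]$, matches exactly the domain of the restricted bijection of Lemma \ref{sat}, and that the image matches exactly the index set for $\mathrm{rr}(\mathrm{Sh}(M))$. Once this matching is pinned down, the conclusion follows formally from $\varphi^{-1}$ preserving intersections; no further computation with $\rho_i$, shaving, or Hensel-type lifting is needed at this stage, since all the work has already been absorbed into Lemmas \ref{sir3} and \ref{sat}.
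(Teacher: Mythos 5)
Your proposal is correct and follows exactly the paper's route: the paper's own proof of the second case is precisely ``combine Lemma \ref{sir3} with the restricted bijection of Lemma \ref{sat},'' and you have correctly supplied the bookkeeping it leaves implicit (semi-simplicity of $L^{\perp}/L$ rewritten as $\m L^{\perp}\subseteq L$ over a local ring, the squeeze $\m^{r-1}M\subseteq L\subseteq M[\m^{r-1}]$ via $(\m^{r-1}M)^{\perp}=M[\m^{r-1}]$, and compatibility of $\varphi^{-1}$ with intersections). The $r\le 1$ case likewise matches the paper's one-line argument that $\mathrm{lr}(M)=0$ forces $0$ to occur in the defining intersection.
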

\begin{proof}
The first case is obvious as $\mathrm{lr}(M)=0$ in that case. The second case follows from Lemma \ref{sir3} and Lemma \ref{sat}. 
\end{proof}

\begin{remark} \label{hos}
 A technical proof (Theorem 5.5.2 \cite{KO1}) shows that in the previous lemma we have that $\mathrm{rr}(M)=0$ if $r \geq 2$, $\rho_r(M)$ is
isotropic and $\mathrm{char}(R/\m) \neq 2$. If $\mathrm{char}(R/\m) \neq 2$, this is not necessarily true. 
\end{remark}

We can finally prove Theorem \ref{srt}.

\begin{proof}[Proof of Theorem \ref{srt}]
Let $\mathrm{Ann}_R(M)=\m^r$ where $0 \leq r \leq \mathrm{length}_R(R)$ and write $M=\bigoplus_{i: 1 \leq i \leq r} M_i$ where $M_i \cong
(R/\m^i)^{s_i}$ for some $s_i \in \Z_{\geq 0}$. We give a proof by induction on $r$. If $r \geq 2$ we let $\varphi: M[\m^{r-1}] \to
M[\m^{r-1}]/\m^{r-1}M=\mathrm{Sh}(M)$ be the natural map. Let $M'=\bigoplus_{i:\ d \leq i \leq r} \m^{\lfloor \frac{i+d-1}{2} \rfloor}
 M_i$. We first claim that $M'=\mathrm{lr}_{d-1}(M)$. Indeed, if $M$ is cyclic of length $r$ we have 
\begin{eqnarray*}
\mathrm{length}( \m^{\lfloor \frac{i+d-1}{2} \rfloor} M) &=& \mathrm{max}(0,r-\lfloor \frac{r+d-1}{2} \rfloor) = \mathrm{max}(0, \lfloor
\frac{r-d+2}{2} \rfloor) \\
&=& \mathrm{length}(\mathrm{lr}_{d-1}(M))
\end{eqnarray*}
by Lemma \ref{toto} and the general case follows easily.

Remark that if $d>r$, we need to prove that $\mathrm{rr}(M) \supseteq 0$, which is obviously correct. This already shows that the formula is correct
for $r=0, 1$. Assume
now that
$d \leq r$. If $r=2$ and $d=2$ notice that $M'=\m
M_2=\varphi^{-1}(0)=\varphi^{-1}(\mathrm{rr}(\mathrm{Sh}(M)))=\mathrm{rr}(M)$, according to Lemma \ref{serp}. 

Now let $r>2$. First remark that the $d$ in the statement of this theorem doesn't change when passing from $M$ to $\mathrm{Sh}(M)$ (Lemma
\ref{threes}). Now write $\mathrm{Sh}(M)=
\bigoplus_{i:\ 1 \leq i \leq r-1} M_i'$ where $M_i'=M_i$ for $i \neq r-2$ and $M_{r-2}'=M_{r-2} \oplus M_{r}[\m^{r-1}]/\m^{r-1}M_r$. We then have,
according to Lemma \ref{serp} and the induction hypothesis
\begin{eqnarray*}
 \mathrm{rr}(M) &\supseteq& \varphi^{-1}(\mathrm{rr}(\mathrm{Sh}(M))) = \varphi^{-1} \left(\bigoplus_{i:\ d \leq i \leq r-1} \m^{\lfloor
\frac{i+d-1}{2} \rfloor} M_i'\right) \\
&=& \varphi^{-1} \left( \left( \bigoplus_{i:\ d \leq i \leq r-1} \m^{\lfloor \frac{i+d-1}{2} \rfloor} M_i \right) \oplus \m^{\lfloor
\frac{r-2+d-1}{2} \rfloor} M_{r}[\m^{r-1}]/\m^{r-1}M_{r}\right) \\
&=& \left( \bigoplus_{i:\ d \leq i \leq r-1} \m^{\lfloor \frac{i+d-1}{2} \rfloor} M_i \right) \oplus \m^{\lfloor
\frac{r-2+d-1}{2} \rfloor+1} M_r \\
&=& \bigoplus_{i:\ d \leq i \leq r} \m^{\lfloor \frac{i+d-1}{2} \rfloor} M_i = \mathrm{lr}_{d-1}(M).
\end{eqnarray*}

\end{proof}

\begin{remark} \label{sus}
 It follows from Remark \ref{hos} that we have equality instead of $\supseteq$ in Theorem \ref{srt} if $\mathrm{char}(R/\m) \neq
2$. 
\end{remark}

\begin{proposition} \label{us}
Assume that $\bi$ is quasi-anisotropic. Then $\mathrm{rr}(M)=\mathrm{lr}(M)$. 
\end{proposition}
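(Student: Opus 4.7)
The plan is to reduce to the local case and then induct on the exponent $r$, defined by $\mathrm{Ann}_R(M) = \m^r$, using the shaving technique. The reduction uses Lemma \ref{ds2} together with the direct sum formula $\mathrm{rr}(M) = \bigoplus_{\pa} \mathrm{rr}(M_{\pa})$ displayed in this section and the analogous formula for the lower root, so one may assume $(R,\m)$ is local. The inclusion $\mathrm{rr}(M) \subseteq \mathrm{lr}(M)$ is essentially free: by Corollary \ref{ji} the submodule $\mathrm{lr}(M)$ itself satisfies $\mathrm{lr}(M) \subseteq \mathrm{lr}(M)^\perp$ with $\mathrm{lr}(M)^\perp/\mathrm{lr}(M)$ semisimple, and so participates in the intersection defining $\mathrm{rr}(M)$. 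The content is the reverse inclusion $\mathrm{lr}(M) \subseteq \mathrm{rr}(M)$.

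The base case $r \leq 1$ is immediate, since then $M$ is semisimple and $\mathrm{lr}(M) = 0$. For the inductive step assume $r \geq 2$. Because $r \neq 1$ (so $r$ odd forces $r \geq 3$), the summand $\rho_r(M)$ is anisotropic, being an orthogonal direct summand of one of the anisotropic forms $\perp_{i\ \mathrm{even}} \rho_i(M)$ or $\perp_{i \geq 3,\ i\ \mathrm{odd}} \rho_i(M)$ provided by quasi-anisotropy. Lemma \ref{serp} therefore applies and yields
\begin{eqnarray*}
\mathrm{rr}(M) &=& \varphi^{-1}\bigl(\mathrm{rr}(\mathrm{Sh}(M))\bigr),
\end{eqnarray*}
where $\varphi : M[\m^{r-1}] \to \mathrm{Sh}(M)$ is the canonical surjection.

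It now suffices to show $\mathrm{rr}(\mathrm{Sh}(M)) = \mathrm{lr}(\mathrm{Sh}(M))$, for then Lemma \ref{trivi} gives $\varphi^{-1}(\mathrm{lr}(\mathrm{Sh}(M))) = \mathrm{lr}(M)$ and the induction closes. For $r \geq 3$, the Remark following Lemma \ref{threes} transfers non-degeneracy to $\mathrm{Sh}(M)$, and Lemma \ref{threes} combined with Lemma \ref{merk} realizes $\perp_{i\ \mathrm{even}} \rho_i(\mathrm{Sh}(M))$ and $\perp_{i \geq 3,\ i\ \mathrm{odd}} \rho_i(\mathrm{Sh}(M))$ as orthogonal summands of the anisotropic forms $\perp_{i\ \mathrm{even}} \rho_i(M)$ and $\perp_{i \geq 3,\ i\ \mathrm{odd}} \rho_i(M)$ respectively---using crucially that $r-2$ and $r$ share parity---so that $\mathrm{Sh}(M)$ is again quasi-anisotropic and of strictly smaller exponent, and the induction hypothesis applies. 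For $r = 2$, $\mathrm{Sh}(M)$ already has exponent at most $1$ and hence $\mathrm{lr}(\mathrm{Sh}(M)) = 0$; on the other hand $L = 0$ lies in the intersection defining $\mathrm{rr}(\mathrm{Sh}(M))$ since $0^\perp/0 = \mathrm{Sh}(M)$ is semisimple, so $\mathrm{rr}(\mathrm{Sh}(M)) = 0$ directly.

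The main obstacle is purely bookkeeping: checking that shaving preserves both non-degeneracy and quasi-anisotropy, so that the induction hypothesis genuinely applies to $\mathrm{Sh}(M)$ in the main case $r \geq 3$. This is handled by Lemma \ref{threes}, its following remark, and Lemma \ref{merk}, and introduces no new essential difficulty; the bulk of the work has already been done in setting up $\rho_i$, shaving, and Lemma \ref{serp}.
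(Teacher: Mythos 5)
Your proof is correct, but it takes a different route from the paper. The paper deduces the proposition in two lines from Theorem \ref{srt}: quasi-anisotropy is exactly the statement that one may take $d=2$ there, so $\mathrm{rr}(M)\supseteq \mathrm{lr}_{d-1}(M)=\mathrm{lr}_{1}(M)=\mathrm{lr}(M)$, while the reverse inclusion is the same observation you make via Corollary \ref{ji}. You instead bypass Theorem \ref{srt} and the $\mathrm{lr}_s$ machinery entirely and run the shaving induction directly: quasi-anisotropy makes $\rho_r(M)$ anisotropic (here your parity remark, $r$ even or $r\geq 3$ odd, is exactly the point), Lemma \ref{serp} gives $\mathrm{rr}(M)=\varphi^{-1}(\mathrm{rr}(\mathrm{Sh}(M)))$, and Lemma \ref{trivi} converts $\mathrm{rr}(\mathrm{Sh}(M))=\mathrm{lr}(\mathrm{Sh}(M))$ into the desired equality upstairs. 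This is essentially the proof of Theorem \ref{srt} specialized to $d=2$, but the specialization is genuinely cleaner: because $\mathrm{lr}_1=\mathrm{lr}$ interacts directly with Lemma \ref{trivi}, you avoid the explicit homogeneous-decomposition length computation with $\lfloor\frac{i+d-1}{2}\rfloor$ that the paper needs for general $d$. You are also careful about the two points where the induction could leak: that $\mathrm{Sh}(M)$ remains non-degenerate and quasi-anisotropic for $r\geq 3$ (Lemma \ref{threes} preserving parity), and that the case $r=2$ must be handled by hand since the remark guaranteeing non-degeneracy of $\mathrm{Sh}(M)$ only covers $r\geq 3$ — your direct verification that $\mathrm{rr}(\mathrm{Sh}(M))=0$ there sidesteps this correctly. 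The trade-off is that the paper's route yields the more general Theorem \ref{srt} as a reusable statement, whereas yours is self-contained modulo Lemma \ref{serp} and slightly shorter in total if one only wants this proposition.
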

\begin{proof}
For the proof we may assume that $R$ is local and that we are in the case of Corollary \ref{srt}. Here we have $d=2$ and we get
\begin{eqnarray*}
\mathrm{lr}(M) \supseteq \mathrm{rr}(M) \supseteq \mathrm{lr}_0(M)=\mathrm{lr}(M).
\end{eqnarray*}
Hence we find $\mathrm{rr}(M)=\mathrm{lr}(M)$.
\end{proof}

\begin{remark}
 Remark \ref{sus} shows that the converse of Lemma \ref{us} is also true if $2$ is a unit in $R/\mathrm{Ann}_R(M)$.
\end{remark}

\section{Acknowledgements} 

I would like to thank Professor Hendrik Lenstra for essentially coming up with all the new theory and for helping me to write this article. Without
his help this article wouldn't be possible.


\begin{thebibliography}{XX}

\bibitem{AT} M.F. Atiyah, I.G. MacDonald, Introduction to Commutative Algebra, Addison-Wesley Publishing Company, 1969
\bibitem{KO1} M. Kosters, Anisotropic modules and the integral closure, Universiteit Leiden, 2010,
\url{http://www.math.leidenuniv.nl/scripties/KostersMaster.pdf}	
\bibitem{KO2} M. Kosters, Anisotropy and the integral closure, to be submitted
\bibitem{LA} S. Lang, Algebra, Revised third edition, Springer-Verlag, 2002   
\bibitem{LAM} T.Y. Lam, Lectures on Modules and Rings, Springer-Verlag, 1999
\bibitem{LAM2} T.Y. Lam, The Algebraic Theory of Quadratic Forms, W. A. Benjamin, 1973
\end{thebibliography}
\end{document}